\newtheorem{thm}{Theorem}[section]
\newtheorem{prop}{Proposition}
\newtheorem{lemma}[thm]{Lemma}
\newtheorem{cor}[thm]{Corollary}
\theoremstyle{definition}
\newtheorem{defn}[thm]{Definition}
\theoremstyle{remark}
\newtheorem{remark}[thm]{Remark}
\numberwithin{equation}{section}
\newcommand{\mathsym}[1]{{}}
\newcommand{\unicode}[1]{{}}
\def\C{\mathbb{C}}
\def\R{\mathbb{R}}
\def\CP{\mathbb{CP}}
\def\Z{\mathbb{Z}}
\def\G{\mathfrak{G}}
\def\PP2{\mathcal{P}^2}
\def\G2{\mathrm{G}^+_2}
\def\G1{\mathrm{G}^+_1}
\def\NR1{\mathrm{N}^+_1}
\begin{document}
\title[]{The Cauchy-Riemann strain functional for Legendrian curves in the $3$-sphere}

\author{Emilio Musso}
\address{(E. Musso) Dipartimento di Matematica, Politecnico di Torino,
Corso Duca degli Abruzzi 24, I-10129 Torino, Italy}
\email{emilio.musso@polito.it}

\author{Filippo Salis}
\address{(F. Salis) Dipartimento di Matematica, Politecnico di Torino,
Corso Duca degli Abruzzi 24, I-10129 Torino, Italy}
\email{filippo.salis@polito.it}

\thanks{Authors partially supported by PRIN 2017 "Real and 
Complex Manifolds: Topology, Geometry and holomorphic 
dynamics"  project 2017JZ2SW5; by the GNSAGA, Italy of 
INDAM. The present research was also partially supported 
by MIUR, Italy grant ‘‘Dipartimenti di Eccellenza’’ 
2018–2022, CUP: E11G18000350001, DISMA, Politecnico di 
Torino, Italy.}

\subjclass[2010]{53D20; 53A20; 37K10; 37K25; 32V05}

\date{Version of \today}

\keywords{CR-geometry, Legendrian curves, contact structures, Arnold-Liouville integrability, elliptic curves, elliptic functions and integrals}

\begin{abstract}
The lower-order  cr-invariant variational problem for Legendrian curves in the $3$-sphere is studied  and its Euler-Lagrange equations are deduced. Closed critical curves are investigated. Closed critical curves with non-constant cr-curvature are characterized. We prove that their cr-equivalence classes are in one-to-one correspondence with the rational points of a connected planar domain. A procedure to explicitly build all such curves is described. In addition, a geometrical interpretation of the rational parameters in terms of three phenomenological invariants is given.
\end{abstract}
\maketitle

\section{Introduction}\label{0} The present paper is a first step toward a more ambitious research plan, aimed at linking the topology of Legendrian knots in a contact $3$-manifold to their differential invariants with respect to a compatible Cauchy-Riemann structure \cite{CH}. The invariants can be build from the Chern's structure bundle and its Cartan connection \cite{Ca1,CM} via the moving frames method \cite{CH1,GG,JMN}. Equivalently, one can resort to the Fefferman conformal structure \cite{BDS,Fe,L} and to its normal conformal connection \cite{Ca3,EMN,Ko}. In cr-geometry, most of the attention has been focused on a family of curves transversal to the contact distribution, know as {\it chains} \cite{BDS,Ca1,CM,Fe,Koch}. Chains arise as projections of null geodesics of the Fefferman conformal structure. Inspired by the strong interrelationships between cr and Lorentzian conformal geometry and by some earlier works on conformal geometry of curves \cite{DMN,MMR,M0,MN4}, we analyze global properties of Legendrian curves in the 3-sphere equipped with its standard cr-structure. In addition to the aforementioned interrelationships with Lorentzian conformal geometry, the fact that the cr-transformation group of ${\rm S}^3$ is a real form of ${\rm PSL}(3,\C)$, explains the many formal similarities with classical projective differential geometry of plane curves \cite{Ca2,Ha,MN1,M2,OT}. For instance, one can associate to a Legendrian curve $\gamma$ of ${\rm S}^3$ a cubic form ${\mathfrak a}=adt^3$ and a projective structure on the curve. They originate a higher-order differential invariant, the {\it cr-stress tensor}. If $\gamma$ is {\it generic}, i.e. if its cubic form is everywhere different from zero, then one can find parameterizations such that $a=1$. Hence, a generic Legendrian curve comes equipped with an intrinsic orientation and its shape is detected by a single differential invariant, the {\it cr-curvature} $\kappa$. Integrating the linear differential form $\mathfrak{s}=\sqrt[3]{|a|}dt$ one gets an analogue of the {\it projective length} of a plane curve.  Since $\mathfrak{s}$ is dimensionless, it is called the {\it infinitesimal strain} and the integral $\mathfrak{S}_{\gamma}$ is said the {\it total strain} of $\gamma$. The  {\it Arnold-Liouville} and the {\it collective complete}\footnote{{\it non-commutative integrability}, in the terminology of \cite{FT,JO}} {\it integrability}  \cite{FT,GS,JO} of the Hamiltonian  contact system governing the geometry of generic critical curves was studied in \cite{M1}. Accordingly, generic critical curves can be found by quadratures and explicit parametrizations can be given in terms of elliptic functions and integrals. In this paper we address the question of existence and global properties of {\it closed} critical curves. 

If $\gamma, \widetilde{\gamma}  : \R \to {\rm S}^3$ are two curves and $|[\gamma]|, |[\widetilde{\gamma}]|$ denote their trajectories, then $\gamma$ and $\widetilde{\gamma}$  are said equivalent if there is an element $[{\bf A}]$ of the cr-transformation group $\widehat{\rm{G}}$ of ${\rm S}^3$, such that $[{\bf A}]\cdot |[\gamma]| = |[\widetilde{\gamma}]|$. By a symmetry of $\gamma$ is meant an element $[{\bf A}] \in \widehat{{\rm G}}$, such that $[{\bf A}] \cdot |[\gamma]| = |[\gamma]|$. The set of all symmetries of $\gamma$ is a subgroup $\widehat{{\rm G}}_{\gamma}$ of $\widehat{{\rm G}}$. The {\it  symmetry group} of a generic closed curve with non-constant cr-curvature is finite and its cardinality is called the {\it wave number}. From the viewpoint of the cr-geometry, the most elementary Legendrian knots are the {\it cycles}, characterized by having null cubic form and generic Legendrian knots with constant cr-curvature. A cycle is equivalent to the trivial Legendrian knot $t\in \R\to (\cos(t),-i\sin(t))\in {\rm S}^3\subset \C^2$. The symmetry group of a cycle is isomorphic to $\rm{SL}(2,\R)$, its {\it Maslov index}\footnote{or {\it turning number}} \cite{GE}  is zero and its {\it Thurston-Bennequin} invariant \cite{GE} is $-1$. Closed generic Legendrian curves with constant cr-curvature are orbits of one-parameter subgroups and their symmetry groups are isomorphic to ${\rm S}^1$ . The equivalence classes of closed generic Legendrian curves with constant curvature are in one-to-one correspondence with pairs $(m,n)$ of relatively prime postive integers such that $m>n$ (see Theorem \ref{S3.6.P1}). A generic Legendrian curve with constant cr-curvature and characteristic numbers $m,n$ is a torus knot of type $(-m,n)$ with Maslov index equal to $m-n$ and Bennequin-Thurston invariant equal to $-mn$ . Then, in view of the classification \cite{EH} of Legendrian torus knots, each isotopy class of a negative Legendrian torus knot with maximal Maslov index and maximal Thurston-Bennequin invariant is represented by a Legendrian curve with constant cr-curvture. The maximal tori of $\widehat{{\rm G}}$ (ie, maximal compact abelian subgroups) are $2$-dimensional and conjugates each other. The action of a maximal torus ${\rm T}^2\subset \widehat{{\rm G}}$ on ${\rm S}^3$ has two special orbits, the {\it axes of symmetry}. These orbits are chains and have a natural positive orientation. Now we state the three main results.
\medskip

\noindent {\bf Theorem A}.{\it \hspace{1pt} A Legendrian curve $\gamma$ is critical for the total strain functional if and only if its stress tensor vanishes. A critical curves is either a cycle or else is generic.}
\medskip

\noindent Generic Legendrian curves with constant curvature are critical points of the strain functional. For brevity, generic critical curves with non-constant periodic curvature are called {\it strings}.
\medskip

\noindent {\bf Theorem B}.{\it \hspace{1pt} The equivalence classes of closed strings are in one to one correspondence with the rational points of the domain 
$${\mathcal M}=\{(x,y)\in \R^2 : x^2+xy+y^2<1/4,\hspace{2pt} x-y>0,\hspace{2pt} x+y>1/2 \}.$$}

\noindent The rational points of ${\mathcal M}$ are called the {\it moduli of closed strings}. If $\gamma$ is a cr-string with modulus $(q_2,q_3)$, the positive integers ${\mathtt h}_1,{\mathtt k}_1,{\mathtt h}_2$ and ${\mathtt k}_2$ such that ${\rm gcd}({\mathtt h}_1,{\mathtt k}_1)={\rm gcd}({\mathtt h}_2,{\mathtt k}_2)=1$ and that
${\mathtt h}_1/{\mathtt k}_1=2q_2+q_3$, ${\mathtt h}_2/{\mathtt k}_2=q_3-q_2$ are called the {\it characteristic numbers} of $\gamma$. The third main result is the following.
\medskip

\noindent {\bf Theorem C}.{\it \hspace{1pt} Let $\gamma$ be a closed string with characteristic numbers $({\mathtt h}_1,{\mathtt k}_1,{\mathtt h}_2,{\mathtt k}_2)$ and wave number ${\mathtt n}$, then

\noindent $\bullet$ $\widehat{{\rm G}}_{\gamma}$ is a non-trivial subgroup of a unique maximal torus ${\rm T}^2_{\gamma}$;

\noindent $\bullet$  $|[\gamma]|$ doesn't intersect the axes of symmetry;

\noindent $\bullet$ ${\mathtt n}={\rm lcm}({\mathtt k}_1,{\mathtt k}_2)$ and the integers ${\mathtt l}_1={\mathtt n}{\mathtt h}_2/{\mathtt k}_2$, ${\mathtt l}_2=-{\mathtt n}{\mathtt h}_1/{\mathtt k}_1$ are the linking numbers of $\gamma$ with the symmetry axes.}
\medskip

\noindent A consequence of Theorem {\rm C} is that the shape of a closed string is detected by three phenomenological invariants: the wave number and the linking numbers with the two axes of symmetry. It also provides a sort of quantization for closed critical curves of the total strain functional. The reconstruction of a string from the phenomenological invariants requires the inversion of the {\it period map} (see Definition \ref{periodM}). This can be achieved by numerical methods. All other steps involve explicit formulas containing elliptic functions and elliptic integrals. Thus, the procedure can be made operational with the help of a software supporting numerical routines and elliptic functions. 
\medskip

\noindent The paper is organized as follows. Section 1 collects some basic facts about the standard cr-structure of the $3$-sphere. Section 2 is devoted to a preliminary analysis of the main cr-differential invariant of a Legendrian curve. In Section 3 we prove Theorem A. In Section $4$ we investigate closed Legendrian curves with constant curvature  and we characterize closed strings (Theorem \ref{closureconditions}). 
In Section $5$ we prove Theorem {\rm B}. In the last section we find explicit parameterizations of closed strings and we prove Theorem {\rm C}. At the end of the section we discuss some explicit examples.
\medskip

\noindent Numerical and symbolic computations, as well as graphics, are made with the software {\it Mathematica}. In the fourth, fifth and sixth sections, properties of the elliptic functions and integrals are used in a substantial way. In this regard, we follow the standard notation however, we advise the reader that the square of the modulus is used as the fundamental parameter for the Jacobian functions and their integrals. As basic references for the theory of elliptic functions and integrals we use the monographs
\cite{By,La}. For the few basic notions about Legendrian knots used in the paper we refer to \cite{FT,GE}.

\section{Preliminaries}\label{1}
\subsection{The Cauchy-Riemann structure of the 3-sphere}\label{S1.1} Let $\C^{(2,1)}$ denote $\C^3$ with the pseudo-Hermitian inner product
\begin{equation}\label{hp}\langle \mathbf{z},\mathbf{w} \rangle = i(\overline{z}^1w^3-\overline{z}^3w^1)+\overline{z}^2w^2=\sum_{i=1}^3 h_{ij}\overline{z}^i w^j,\quad h_{ij}=\overline{h}_{ji}\end{equation}
and with the complex volume form $\Omega=dz^1\wedge dz^2\wedge dz^3$. The map
$$
z=(z^1,z^2)\in {\rm S}^3\subset \C^2\to [^t(\frac{1+z^1}{2},i\frac{z^2}{\sqrt{2}},i\frac{1-z^1}{2})]\in \CP^2.$$
is an embedding of the 3-dimensional sphere into the complex projective plane, whose image is the
strongly pseudo-convex real hyperquadric $\mathcal{S}\subset \CP^2$ defined by the equation $\langle \mathbf{z},\mathbf{z} \rangle = 0$. The differential $1$-form
\begin{equation}\label{cf}\zeta =- \frac{i}{\overline{{\bf z}}^t\cdot {\bf z}} \langle \bf{z},d{\bf z}\rangle |_{{\rm T}(\mathcal{S})}
\end{equation}
gives on $\mathcal{S}$ an oriented contact structure. The annihilator of $\zeta$ is a complex sub-bundle of ${\rm T}(\CP^2)|_{{\rm T}(\mathcal{S})}$ and defines a {\it Cauchy-Riemann (cr) structure}  on $\mathcal{S}$. Let ${\rm P}_{\infty}\in {\mathcal S}$ be the pont with homogeneous coordinates $(0,0,1)$. The {\it Heisenberg projection}
$$p_h:[{\bf z}]\in \mathcal{S}\setminus \{{\rm P}_{\infty}\}\to \left(\mathrm{Re}(z^2/z^1),\mathrm{Im}(z^2/z^1),\mathrm{Re}(z^3/z^1)\right)\in \R^3$$
is a contact diffeomorphism between  $\mathcal{S}\setminus \{P_{\infty}\}$ and $\R^3$ equipped with the contact form $\widetilde{\zeta}=dz-ydx+xdy$. The special  unitary group ${\mathrm G}\cong {\rm SU}(2,1)$ of (\ref{hp}) acts transitively and almost effectively on ${\mathcal S}$ in the usual way: given a point $[{\bf z}]\in \mathcal{S}$ represented by the isotropic non-zero vector ${\bf z}\in \C^{2,1}$, and given ${\bf A}\in {\rm G}$, then 
${\bf A}\cdot [{\bf z}]=[{\bf A} {\bf z}]$. This action  gives all the cr-transformations of $\mathcal{S}$ \cite{Ca1,CM}. Actually, the cr-transformation group of $\mathcal{S}$ is the quotient Lie group 
$\widehat{{\rm G}}={\rm G}/{\rm Z}_{\rm{G}}$ of ${\rm G}$ by its center ${\rm Z}_{\rm{G}}\cong \Z_3$. For each ${\bf A}\in {\rm G}$, we denote by $[{\bf A}]$ its equivalence class in $\widehat{{\rm G}}$ and by
${\rm A}_1, {\rm A}_2,{\rm A}_3$ its column vectors. Then, $({\rm A}_1, {\rm A}_2,{\rm A}_3)$ is a {\it light cone} basis of $\C^{2,1}$, that is a basis such that $\langle {\rm A}_i,{\rm A}_j\rangle = h_{ij}$, $i,j=1,2,3$ and that $\Omega({\rm A}_1, {\rm A}_2,{\rm A}_3)=1$.
Conversely, if $({\rm A}_1, {\rm A}_2,{\rm A}_3)$ is a light-cone basis of $\C^{2,1}$, then the matrix ${\bf A}$ with column vectors 
${\rm A}_1, {\rm A}_2,{\rm A}_3$ is an element of ${\rm G}$. Choose the point $\mathrm{P}_0$ with homogeneous coordinates $(1,0,0)$ as the origin of $\mathcal{S}$. The isotropy subgroup at $\mathrm{P}_0$ is the closed subgroup
\begin{equation}\label{gauge}{\rm G}_0=\left\lbrace {\bf Y}(\rho,\phi,z,r)=\left(
                     \begin{array}{ccc}
                       \rho e^{i\phi} & -i\rho e^{-i\phi}\overline{z} & e^{i\phi}(r-\frac{i}{2}\rho \|z\|^2) \\
                       0 & e^{-2i\phi} & z \\
                       0 & 0 & \rho^{-1}e^{i\phi} \\
                     \end{array}\right)
                   \right\rbrace,\end{equation}
where $z\in \C$, $\phi,r,\rho\in \R$ and $\rho>0$. The map
$\pi_0:{\bf A}\in {\rm G}\to {\bf A}\cdot \mathrm{P}_0=[{\rm A}_1]\in \mathcal{S}$
is then a principal ${\rm G}_0$-bundle. The Lie algebra of ${\rm G}$ consists of all traceless, skew-adjoint matrices of (\ref{hp}), that is
$${\mathfrak g}=\{{\rm X}\in {\mathfrak sl}(3,\C) :\hspace{2pt}  ^t\overline{{\rm X}}\cdot h + h{\rm X}=0,\hspace{2pt} h=(h_{ij})\}.
$$
We denote by ${\mathfrak h}$ the vector space of the traceless self-adjoint matrices of the pseudo-Hermitian inner product (\ref{hp}).

\subsection{Maximal compact Abelian subgroups}\label{S1.2} 
The maximal compact Abelian subgroups of $\widehat{{\rm G}}$ are conjugate to the two-dimensional torus\footnote{${\bf E}_a^b$, $a,b=1,2,3$ are the elementary matrices $^t(\delta_a^1,\delta_a^2,\delta_a^3)\cdot (\delta_1^b,\delta_1^b,\delta_3^b)$, $a,b=1,2,3$.}
\begin{equation}\label{torus}
{\rm T}^2=\{[ {\rm R}(\theta,\phi)] :{\rm R}(\theta,\phi)= \mathfrak{U}\cdot( e^{i\theta}{\bf E}_1^1+e^{i\phi}{\bf E}_2^2+e^{-i(\phi+\theta)}{\bf E}_3^3) \cdot \mathfrak{U}^{-1},\ \phi,\theta \in \R/2\pi \Z\},
\end{equation}
where
\begin{equation}\label{U}{\mathfrak U} =\frac{1}{\sqrt{2}}({\bf E}_1^1+\sqrt{2}{\bf E}_2^2+{\bf E}_3^3)
+\frac{i}{\sqrt{2}}({\bf E}_3^1+{\bf E}^3_1)
\end{equation}
The arc $\Sigma=\{[^t(1,r,ir^2/2)] : r\in [0,\sqrt{2}]\}\subset {\mathcal S}$ is a slice for the action of ${\rm T}^2$ on ${\mathcal S}$. The orbits ${\mathcal T}_r\subset {\mathcal S}$, $r\in (0,\sqrt{2})$ are regular. They can be regarded as the Cauchy-Riemann analogues of the {\it Cyclides of Dupin} in M\"obius geometry \cite{Cay2,Ma,JMN}. By identifying ${\mathcal T}_r$ with its image in $\R^3$ by means of the Heisenberg projection, ${\mathcal T}_r$ is the torus (see Figure \ref{FIG0}) generated by the rotation around the $Oz$-axis of the ellipse parameterized by
$\eta_r:\theta\in \R\to (x_r(\theta),y_r(\theta),z_r(\theta))\in \R^3$, where 
\begin{equation}\label{elp}\begin{cases}
x_r(\theta)= 2r\left(\frac{2+r^2+(2-r^2)\cos(\theta)}{4+r^4+(4-r^4)\cos(\theta)}\right),\\
y_r(\theta)=-\frac{2r(r^2-2)\sin(\theta)}{4+r^4+(4-r^4)\cos(\theta)},\\
z_r(\theta)=\frac{(r^4-4)\sin(\theta)}{4+r^4+(4-r^4)\cos(\theta)}.
\end{cases}
\end{equation}

\begin{defn}\label{AS}{We call ${\mathcal T}_r$ the {\it standard Heisenberg Cyclide} with parameter $r$. The singular orbits of the action of ${\rm T}^2$ are ${\mathcal O}_1={\mathcal T}_0$ and ${\mathcal O}_2={\mathcal T}_{\sqrt{2}}$. Note that ${\mathcal O}_1$ is the intersection of ${\mathcal S}$ with the complex line ${\mathbb P}'=\{[z]\in \CP^2 : z_2=0\}\subset \CP^2$ while ${\mathcal O}_2$ is the intersection of ${\mathcal S}$ with the complex line ${\mathbb P}''=\{[z]\in \CP^2 : z_3=iz_1\}\subset \CP^2$. Hence, ${\mathcal O}_1$ and ${\mathcal O}_2$ are two {\it chains} of ${\mathcal S}$ \cite{Ca1,CM,Koch}. Since they are transversal to the contact distribution we choose the positive orientation with respect to the oriented contact structure of ${\mathcal S}$.}\end{defn}

\noindent In the Heisenberg picture, ${\mathcal O}_1$ is the $Oz$-axis with the orientation that goes from the bottom to the top and ${\mathcal O}_2$ is the Clifford circle $x^2+y^2=2, z=0$, with the counterclockwise orientation with respect to the $Oz$-axis oriented as above  (see Figure \ref{FIG0}). Let ${\bf L}\in {\rm G}$ be the cr-automorphism of order four defined by
\begin{equation}\label{dL}
{\bf L}=\frac{1}{2}\left(({\bf E}_1^1+{\bf E}_3^3)+i({\bf E}_3^1-{\bf E}_1^3)\right)-\frac{1}{\sqrt{2}}
\left(({\bf E}_3^2-{\bf E}_2^3)+i({\bf E}_2^1+{\bf E}_1^2)\right)
\end{equation}
Then, ${\bf L}\cdot {\rm R}(\phi,\psi)\cdot {\bf L}^{-1}={\rm R}(\phi,-(\phi+\psi))$, where ${\rm R}(\phi,\psi)$ is as in (\ref{torus}).
This implies that  $[{\bf L}]$ stabilizes ${\rm T}^2$ and exchanges the two symmetry axes ${\mathcal O}_1$ and ${\mathcal O}_2$.

\begin{figure}[h]
\begin{center}
\includegraphics[height=6.2cm,width=6.2cm]{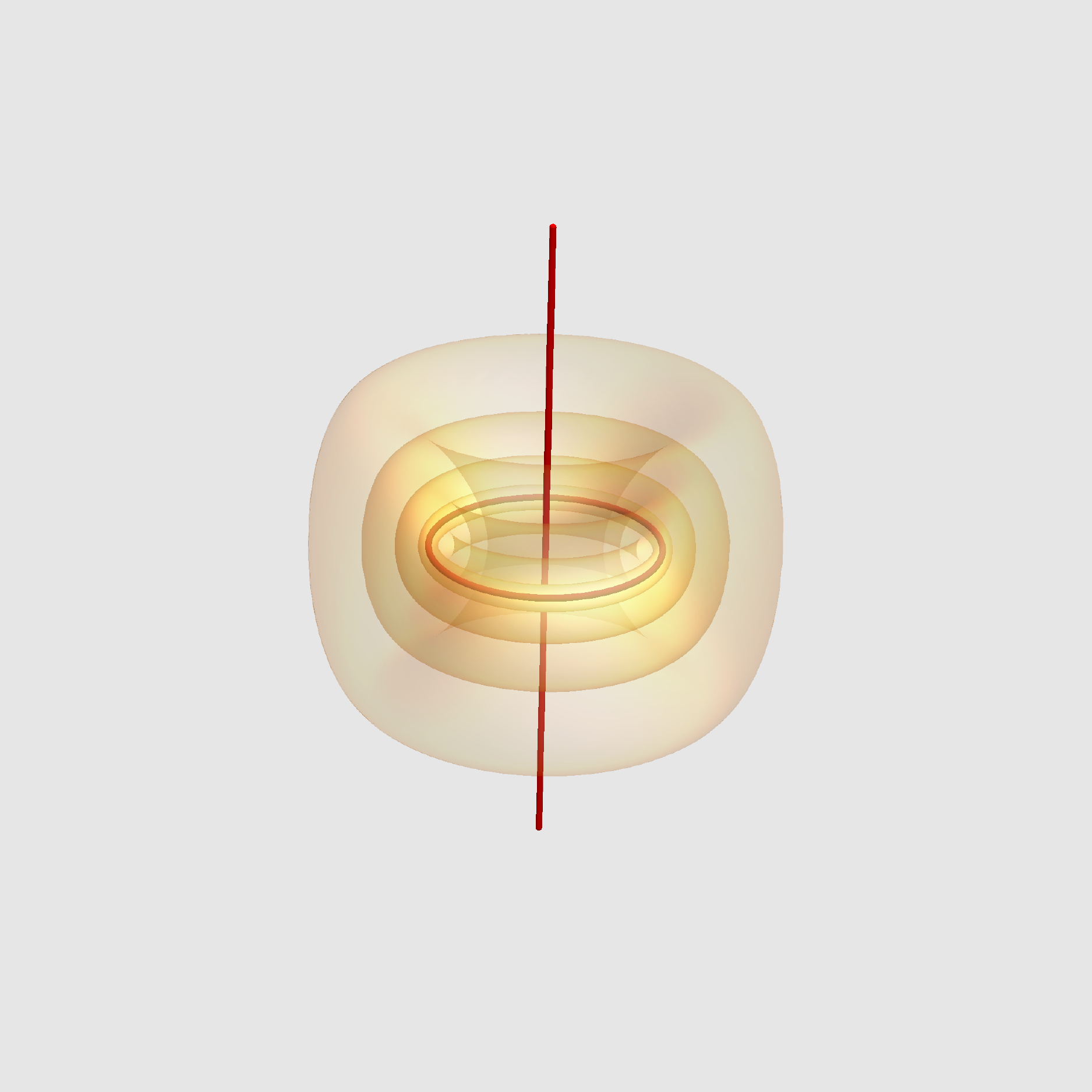}
\includegraphics[height=6.2cm,width=6.2cm]{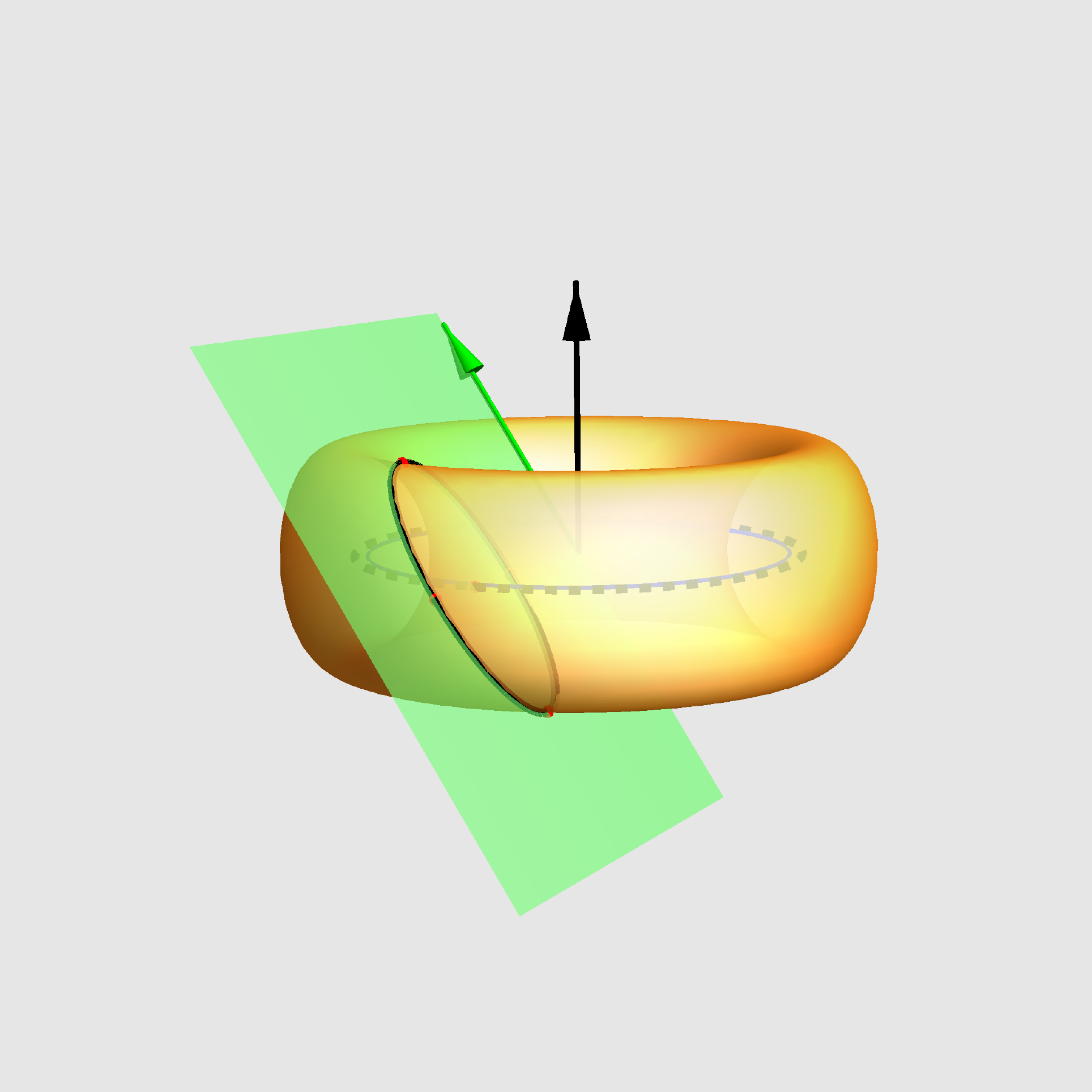}
\caption{\small{Regular and singular orbits (left), the standard Heisenberg Cyclide with $r=1$ and its elliptical profile (right).}}\label{FIG0}
\end{center}
\end{figure}

\section{Legendrian curves}\label{2}

\begin{defn}{A {\it Legendrian curve} is a smooth immersion $\gamma :{\rm I}\subset \R \to \mathcal{S}$ tangent to the contact distribution. Two Legendrian curves $\gamma : {\rm I}\to  \mathcal{S}$ and $\widetilde{\gamma} : \widetilde{{\rm I}}\to  \mathcal{S}$
are said to be {\it cr-congruent} to each other if ${\rm I}=\widetilde{{\rm I}}$ and if there exist ${\bf A}\in {\rm G}$ such that $\widetilde{\gamma}={\bf A}\cdot \gamma$. They are said to be {\it cr-equivalent} to each other if there exist a reparameterization $h: \widetilde{{\rm I}}\to {\rm I}$ such that $\widetilde{\gamma}$ and $\gamma\circ h$ are cr-congruent. A {\it lift} of $\gamma$ is a map $\Gamma:{\rm I}\to \C^3\setminus \{\bf{0}\}$ such that 
$\gamma=[\Gamma]$. We say that $\Gamma$ is {\it normalized} if $\mathrm{det}(\Gamma(t),\Gamma'(t),\Gamma''(t))=i$.}\end{defn}

\noindent It is an easy matter to prove the following Lemma:

\begin{lemma}\label{S2.1L1}{Any Legendrian curve admits a normalized lift. In addition, normalized lifts are uniquely determined up to multiplication by a cubic root of the unity.}\end{lemma}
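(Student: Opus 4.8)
The plan is to reduce the statement to a single transversality fact about the pseudo-Hermitian form $\langle\cdot,\cdot\rangle$ of (\ref{hp}), and then to settle existence and uniqueness by a cube-root argument. First I would pick an arbitrary smooth lift $\Gamma_0\colon {\rm I}\to \C^3\setminus\{\mathbf{0}\}$ of $\gamma$; this exists because ${\rm I}$ is an interval, hence contractible, so the pullback to $\gamma$ of the tautological $\C^*$-bundle $\C^3\setminus\{\mathbf{0}\}\to\CP^2$ is trivial. Any other lift has the form $\Gamma=\lambda\Gamma_0$ with $\lambda$ smooth and nowhere zero, and expanding $\Gamma,\Gamma',\Gamma''$ — noting that every term containing two copies of $\Gamma_0$ or two copies of $\Gamma_0'$ is killed by antisymmetry of the determinant — yields $\det(\Gamma,\Gamma',\Gamma'')=\lambda^3\,\det(\Gamma_0,\Gamma_0',\Gamma_0'')$. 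Hence a normalized lift exists exactly when the Wronskian $w:=\det(\Gamma_0,\Gamma_0',\Gamma_0'')$ is nowhere vanishing and the equation $\lambda^3=i/w$ has a smooth solution.

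The crux, and the only step I expect to require any thought, is to prove $w(t)\neq 0$ for every $t$. Since $\gamma$ is an immersion, $\Gamma_0(t)$ and $\Gamma_0'(t)$ are linearly independent, so $w(t)=0$ would force $\Gamma_0''(t)\in\mathrm{span}_\C(\Gamma_0(t),\Gamma_0'(t))$. Here I would translate the Legendrian hypothesis into lift language: $\gamma\subset\mathcal{S}$ gives $\langle\Gamma_0,\Gamma_0\rangle\equiv 0$, and tangency to the contact distribution, the kernel of $\zeta$ in (\ref{cf}), gives $\langle\Gamma_0,\Gamma_0'\rangle\equiv 0$ — using that $-i\langle z,dz\rangle/\|z\|^2$ is invariant under scaling along the null cone, so it pulls back along $\Gamma_0$ to $-i\|\Gamma_0\|^{-2}\langle\Gamma_0,\Gamma_0'\rangle\,dt$. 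Differentiating the second identity gives $\langle\Gamma_0',\Gamma_0'\rangle=-\langle\Gamma_0,\Gamma_0''\rangle$. Now if $\Gamma_0''(t)$ lay in $\mathrm{span}_\C(\Gamma_0(t),\Gamma_0'(t))$, then $\langle\Gamma_0(t),\Gamma_0''(t)\rangle=0$, whence also $\langle\Gamma_0'(t),\Gamma_0'(t)\rangle=0$; combined with $\langle\Gamma_0(t),\Gamma_0(t)\rangle=\langle\Gamma_0(t),\Gamma_0'(t)\rangle=0$, this makes the complex $2$-plane $\mathrm{span}_\C(\Gamma_0(t),\Gamma_0'(t))$ totally isotropic for $\langle\cdot,\cdot\rangle$. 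But $\langle\cdot,\cdot\rangle$ has signature $(2,1)$, and a Hermitian form of signature $(p,q)$ admits no totally isotropic subspace of dimension exceeding $\min(p,q)$ — intersect with a maximal positive-definite subspace — so an isotropic $2$-plane is impossible, a contradiction. Hence $w$ is nowhere zero.

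It then remains to finish mechanically. For existence: $w$ is a smooth map ${\rm I}\to\C^*$, and on the simply connected ${\rm I}$ it admits a smooth logarithm, hence a smooth cube root, so $\lambda:=(i/w)^{1/3}$ may be chosen smoothly and $\Gamma:=\lambda\Gamma_0$ is a normalized lift. For uniqueness: if $\Gamma$ and $\widetilde{\Gamma}$ are normalized lifts then $\widetilde{\Gamma}=\mu\Gamma$ with $\mu$ smooth and nowhere zero, and comparing Wronskians gives $\mu^3\equiv 1$; since the three cube roots of unity are isolated and ${\rm I}$ is connected, $\mu$ is a constant cube root of unity. The one point I would be careful to spell out is the implication ``$\gamma$ Legendrian $\Rightarrow\langle\Gamma_0,\Gamma_0'\rangle\equiv 0$'', i.e. the scaling-invariance computation for $\zeta$ recorded above; every other ingredient is elementary.
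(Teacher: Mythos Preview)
Your argument is correct. The paper does not actually supply a proof of this lemma --- it only says ``it is an easy matter to prove'' and moves on --- so there is nothing to compare against directly. Your write-up fills the gap cleanly: the cube-root rescaling reduces everything to the nonvanishing of the Wronskian $w=\det(\Gamma_0,\Gamma_0',\Gamma_0'')$, and your isotropic-subspace argument (a $2$-plane cannot be totally isotropic for a signature-$(2,1)$ Hermitian form) is the right way to rule out $w(t)=0$. The identities you extract along the way, $\langle\Gamma_0,\Gamma_0\rangle=\langle\Gamma_0,\Gamma_0'\rangle=0$ and $\langle\Gamma_0',\Gamma_0'\rangle=-\langle\Gamma_0,\Gamma_0''\rangle$, are exactly the ones the paper later rederives (for a \emph{normalized} lift) at the start of the proof of Proposition~\ref{S2.1L2}; you are simply observing that the first two hold for any lift and that this is already enough to force $w\neq 0$.
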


\noindent Let $\Gamma$ be a normalized lift, the functions
\begin{equation}\label{S2.1F1}a=\mathrm{Im}(\langle \Gamma''',\Gamma''\rangle),\quad b=\frac{1}{2}\langle \Gamma'',\Gamma''\rangle,\quad {\mathtt s}= \sqrt[3]{|a|}\end{equation}
and the differential forms
\begin{equation}\label{S2.1F1.bis}{\mathfrak a}=adt^3,\quad {\mathfrak b}=bdt^2,\quad {\mathfrak s}=\mathtt{s}dt,\end{equation}
do not depend on the choice of $\Gamma$. 

\begin{defn}{In analogy with the terminology used  in projective differential geometry \cite{JM}, the smooth differential forms ${\mathfrak b},{\mathfrak a}$ are called the {\it quadratic and the cubic Fubini's forms}. The functions $b$ and $a$ are the corresponding tensor densities. The linear differential form ${\mathfrak s}$ and the function ${\mathtt s}=\sqrt[3]{|a|}$ are said the {\it infinitesimal strain} and the {\it strain density} respectively.}\end{defn}

\begin{remark}{The Fubini's differential forms are the lower order \cite{O1} cr-differential invariants  of a parameterized Legendrian curve. The infinitesimal strain and the strain density are continuous but not necessarily smooth. From the definition it follows that congruent Legendrian curves have the same Fubini's forms and the same infinitesimal strain.}\end{remark}

\begin{prop}\label{S2.1L2}{Let $\gamma : {\rm I}\to  \mathcal{S}$ be a Legendrian curve and $h : {\rm J}\to {\rm I}$ be a change of the parameter. Then, the Fubini's forms and the infinitesimal strain of $\gamma$ and $\widetilde{\gamma}=\gamma\circ h$ satisfy the transformation law
\begin{equation}\label{tldl} \widetilde{{\mathfrak a}}=h^*({\mathfrak a}),\quad 
\widetilde{{\mathfrak b}}=h^*({\mathfrak b})+\mathtt{S}(h),\quad 
\widetilde{{\mathfrak s}}={\rm sign}(h')h^*({\mathfrak s})
\end{equation}
where 
$$\mathtt{S}(h)=\left(\frac{h'''}{h'}-\frac{3}{2}\frac{h''^2}{h'^2}\right)dt^2$$ 
is the Schwartzian derivative of $h$.
}\end{prop}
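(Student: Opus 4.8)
The plan is to compute directly how the normalized lift transforms under reparametrization and then read off the transformation laws for $a$, $b$ and $\mathtt{s}$. First I would fix a normalized lift $\Gamma$ of $\gamma$, so that $\det(\Gamma,\Gamma',\Gamma'')=i$, and form the candidate lift $\Lambda = (h')^{c}\,(\Gamma\circ h)$ of $\widetilde\gamma=\gamma\circ h$; the exponent $c$ must be chosen so that $\Lambda$ is again normalized. Since differentiating $\Gamma\circ h$ introduces factors of $h'$ and since $\det$ is trilinear, a short computation gives $\det(\Lambda,\Lambda',\Lambda'') = (h')^{3c+3}\,i$ up to terms that cancel by multilinearity and antisymmetry (the lower-derivative corrections to $\Lambda'$ and $\Lambda''$ are proportional to $\Lambda$ or $\Lambda'$ and hence drop out of the determinant). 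Thus $3c+3=0$, i.e. $c=-1$, and $\Lambda=(h')^{-1}(\Gamma\circ h)$ is the normalized lift of $\widetilde\gamma$, uniquely up to a cube root of unity by Lemma \ref{S2.1L1} — and the invariants in (\ref{S2.1F1}) are insensitive to that ambiguity.

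Next I would substitute $\Lambda=(h')^{-1}(\Gamma\circ h)$ into the defining formulas (\ref{S2.1F1}). The cleanest route is to compute $\Lambda''$ and $\Lambda'''$ via the chain and product rules and then use the pseudo-Hermitian pairing $\langle\cdot,\cdot\rangle$, which is $\C$-bilinear in the way indicated in (\ref{hp}) and in particular annihilates $\Gamma$ against $\Gamma$, $\Gamma'$ in the relevant low-order pairings (these orthogonality relations follow from the Legendrian condition and the normalization, and are exactly what makes $a$ and $b$ the lower-order invariants). For the cubic form, the leading term of $\langle\Lambda''',\Lambda''\rangle$ is $(h')^{-2}\cdot(h')^{3}\cdot(h')^{2}\cdot\langle\Gamma''',\Gamma''\rangle\circ h \cdot (h')^{-4}\cdot(\dots)$ — more carefully, tracking powers, the net weight works out to $(h')^{3}$, with all correction terms either real (hence killed by $\mathrm{Im}$) or vanishing by the orthogonality relations; this yields $\widetilde a = (h'^{3})\,(a\circ h)$, i.e. $\widetilde{\mathfrak a}=h^*({\mathfrak a})$. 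For the quadratic form, $\tfrac12\langle\Lambda'',\Lambda''\rangle$ produces the leading term $(h'^{2})(b\circ h)$ together with a genuinely non-tensorial remainder coming from the $(h')^{-1}$ prefactor differentiated twice; this remainder must be identified with the Schwarzian $\tfrac{h'''}{h'}-\tfrac32\tfrac{h''^2}{h'^2}$, giving $\widetilde{\mathfrak b}=h^*({\mathfrak b})+\mathtt S(h)$. Finally $\widetilde{\mathtt s}=\sqrt[3]{|\widetilde a|}=\sqrt[3]{|h'|^{3}}\,\sqrt[3]{|a\circ h|}=|h'|\,(\mathtt s\circ h)$, so $\widetilde{\mathfrak s}=\widetilde{\mathtt s}\,dt = |h'|(\mathtt s\circ h)\,dt = \mathrm{sign}(h')\,h^*({\mathfrak s})$, since $h^*(\mathtt s\,dt)=(\mathtt s\circ h)\,h'\,dt$.

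The step I expect to be the main obstacle is the quadratic form: isolating the Schwarzian. Getting $\widetilde{\mathfrak a}$ and $\widetilde{\mathfrak s}$ is essentially bookkeeping of powers of $h'$ plus the vanishing of correction terms, but $\widetilde{\mathfrak b}$ requires carrying the $(h')^{-1}$-prefactor derivatives all the way through $\langle\Lambda'',\Lambda''\rangle$, using the normalization $\langle\Gamma',\Gamma'\rangle$-type relations to discard cross terms, and then recognizing that the surviving combination of $h'',h'''$ is exactly $\mathtt S(h)$. This is a finite computation, best organized by writing $\Lambda = e^{-u}(\Gamma\circ h)$ with $u=\log h'$ so that the $h'$-dependence enters through $u',u''$, which already display the shape $u'' - \tfrac12 (u')^2$-like combinations; with that substitution the Schwarzian drops out almost automatically. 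I would also remark that, since congruent curves share all three forms (noted already in the excerpt), it suffices to verify (\ref{tldl}) for one normalized lift, which is what the above does.
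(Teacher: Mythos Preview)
Your proposal is correct and follows essentially the same route as the paper: identify the normalized lift of $\widetilde\gamma$ as $(h')^{-1}(\Gamma\circ h)$, expand its second and third derivatives, and use the pairing identities of a normalized lift to read off $\widetilde a$ and $\widetilde b$. The only thing the paper makes more explicit is a preparatory step you allude to but do not carry out: it first proves the precise identities $\langle\Gamma,\Gamma\rangle=\langle\Gamma,\Gamma'\rangle=\langle\Gamma',\Gamma''\rangle=\langle\Gamma,\Gamma'''\rangle=0$ and $\langle\Gamma',\Gamma'\rangle=-\langle\Gamma,\Gamma''\rangle=1$ (the exact value $1$, forced by the normalization $\det(\Gamma,\Gamma',\Gamma'')=i$, is what fixes the coefficients so that the remainder in $\widetilde b$ is exactly the Schwarzian).
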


\begin{proof}{First we prove that a normalized lift satisfies the following identities:
\begin{equation}\label{S2.1F7}\begin{split}
& \langle \Gamma,\Gamma\rangle = \langle \Gamma,\Gamma'\rangle = \langle \Gamma',\Gamma''\rangle=\langle \Gamma,\Gamma'''\rangle = 0,\\
& \langle \Gamma',\Gamma'\rangle = -\langle \Gamma,\Gamma''\rangle = 1.\end{split}
\end{equation}
Differentiating $\mathrm{det}(\Gamma,\Gamma',\Gamma'')=i$ we find $\mathrm{det}(\Gamma,\Gamma',\Gamma''')=0$. Then, $\Gamma'''=p\Gamma + q\Gamma'$ where $p,q$ are smooth functions. We then have
\begin{equation}\label{S2.1F3}\langle \Gamma,\Gamma\rangle = \langle \Gamma,\Gamma'\rangle = \langle \Gamma,\Gamma'''\rangle = 0.\end{equation}
Differentiating $\langle \Gamma,\Gamma'\rangle =0$ we get
\begin{equation}\label{S2.1F4}\langle \Gamma,\Gamma''\rangle + \langle \Gamma',\Gamma'\rangle = 0.\end{equation}
Taking the derivative of (\ref{S2.1F4}) we obtain  $\langle \Gamma,\Gamma'''\rangle +2\langle \Gamma',\Gamma''\rangle + \langle \Gamma'',\Gamma'\rangle =0$. Then, using (\ref{S2.1F3}), we deduce that
\begin{equation}\label{S2.1F5}\langle \Gamma',\Gamma''\rangle = 0.\end{equation}
This implies
\begin{equation}\label{S2.1F6}\langle \Gamma',\Gamma'\rangle = v^2,\quad \langle \Gamma,\Gamma''\rangle =-v^2,\end{equation}
where $v$ is a positive constant. We put
\begin{equation}\label{Wilc}\mathrm{B}_1=\Gamma,\quad \mathrm{B}_2=\frac{1}{v}\Gamma',\quad \mathrm{B}_3=-\frac{i}{v^2}\left(\Gamma''+\frac{1}{2v^2}\langle \Gamma'',\Gamma''\rangle \Gamma\right).\end{equation}
From (\ref{S2.1F3}), (\ref{S2.1F4}),(\ref{S2.1F5}) and (\ref{S2.1F6}) it follows that $\langle \mathrm{B}_j,\mathrm{B}_i\rangle = h_{ji}$. Hence,
$$1=|\mathrm{det}(\mathrm{B}_1,\mathrm{B}_2,\mathrm{B}_3)|=v^{-3}|\mathrm{det}(\Gamma,\Gamma',\Gamma'')|=v^{-3}.$$
Therefore, $v=1$. Putting $v=1$ in (\ref{S2.1F6}), we infer that $\langle \Gamma',\Gamma'\rangle =- \langle \Gamma,\Gamma''\rangle =1$. So, (\ref{S2.1F7}) is proved.
\noindent Now we are in a position to deduce the transformation laws (\ref{tldl}). If $\Gamma$ is a normalized lift of $\gamma$, then
$\widetilde{\Gamma}=h'^{-1}\Gamma\circ h$ is a normalized lift of $\widetilde{\gamma}$. From this we get
\begin{equation}\label{S2.1F8} \widetilde{\Gamma}''=h'\Gamma''\circ h - \frac{h''}{h'}\Gamma'\circ h+\left(2\frac{h''^2}{h'^3}-\frac{h'''}{h'^2}\right)\Gamma\circ h.\end{equation}
Using (\ref{S2.1F8}) and (\ref{S2.1F7}), we have
$$\widetilde{b} = h'^2(b\circ h) +\frac{h'''}{h'}-\frac{3}{2}\frac{h''^2}{h'^2}.$$
Then, $\widetilde{{\mathfrak b}}=h^*({\mathfrak b})+\mathtt{S}(h)$. Differentiating (\ref{S2.1F8}) we obtain
$$\widetilde{\Gamma}'''= h'^2\Gamma^{'''}\circ h +\left(\frac{3h''^2}{h'^2}-\frac{2h'''}{h'}\right)\Gamma'\circ h-
\left(\frac{6h''^3}{h'^4}-\frac{6h'' h'''}{h'^3}+ \frac{h^{(4)}}{h'^2}\right)\Gamma \circ h.
$$
Combining this identity with (\ref{S2.1F7}) we get
$$\widetilde{a}={\rm Im}(\langle \widetilde{\Gamma}''',\widetilde{\Gamma}''\rangle) = h'^3 {\rm Im}(\langle \Gamma'''\circ h,\Gamma''\circ h\rangle) = h'^3(a\circ h).
$$
Then, $\widetilde{{\mathfrak a}}=h^*({\mathfrak a})$. Obviously, this implies $\widetilde{{\mathfrak s}}={\rm sign}(h')h^*({\mathfrak s})$.}\end{proof}

\begin{defn}{Borrowing  the terminology of classical projective differential geometry \cite{Cay,Ca2,Ha,OT,TU}, we say that $\gamma(t_*)$ is a {\it sextactic point} if  ${\mathfrak a}|_{t_*}=0$.  A Legendrian curve with no sextactic points is said {\it generic}. If ${\mathfrak a}=0$, then $\gamma$ is said a {\it Legendrian cycle}.}\end{defn}

\begin{remark}\label{ccy}{A cycle is a trivial Legendrian knot equivalent  to
$t\to [^t(1,t,it^2/2)]$. Its Maslov index is zero and its Bennequin-Thurston invariant is $-1$. Thus, according to the Eliashberg's classification of Legendrian unknots \cite{EF,ET1}, the cycles are representatives of the unique Legendrian isotopy class of Legendrian unknots with Bennequin-Thurston invariant $-1$. The Legendrian isotopy class of any other Legendrian unknot can be represented by a stabilization \cite{ET1} of a cycle.  Let $\gamma$ be a Legendrian curve. Then, for every $t_*\in {\rm I}$, there exist a unique cycle passing through $\gamma(t_*)$ with analytic contact of order $\ge 3$ with $\gamma$ at  $\gamma(t_*)$ (see Figure \ref{FIG1}). The order of contact is exactly $3$ if $\gamma(t_*)$ is not a sextactic point. Otherwise, the order of contact is $>3$. The value ${\mathtt s}(t_*)$ of the strain density at $t=t_*$ is a measure of how much the fourth-order jet of $\gamma$ at $\gamma(t_*)$ differs from that of its osculating cycle at the contact point. We refer to \cite{Ca3,GG,Je,JM,M2} for the notion of analytic contact and the related concept of deformation.}\end{remark}

\begin{figure}[h]
\begin{center}
\includegraphics[height=6.2cm,width=6.2cm]{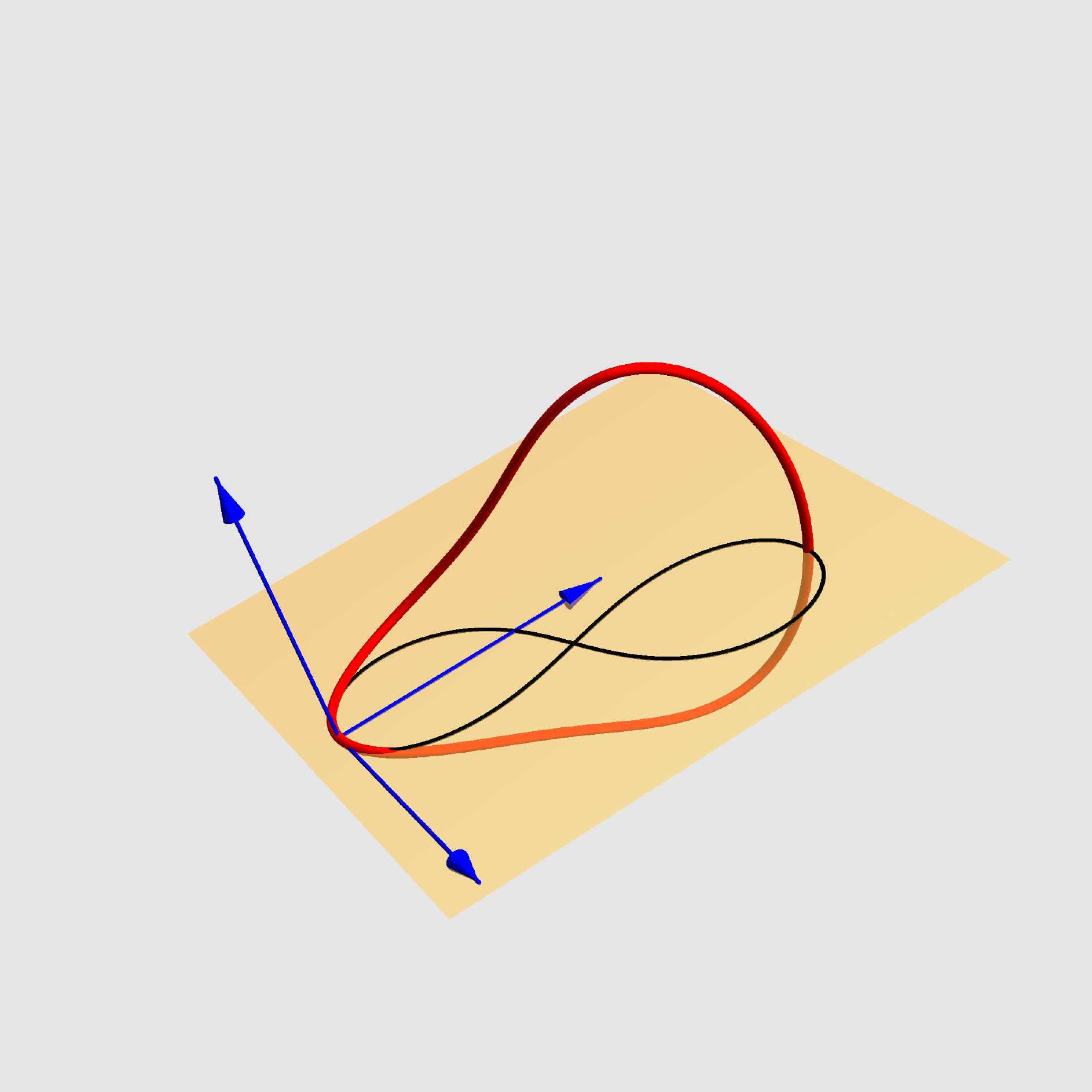}
\includegraphics[height=6.2cm,width=6.2cm]{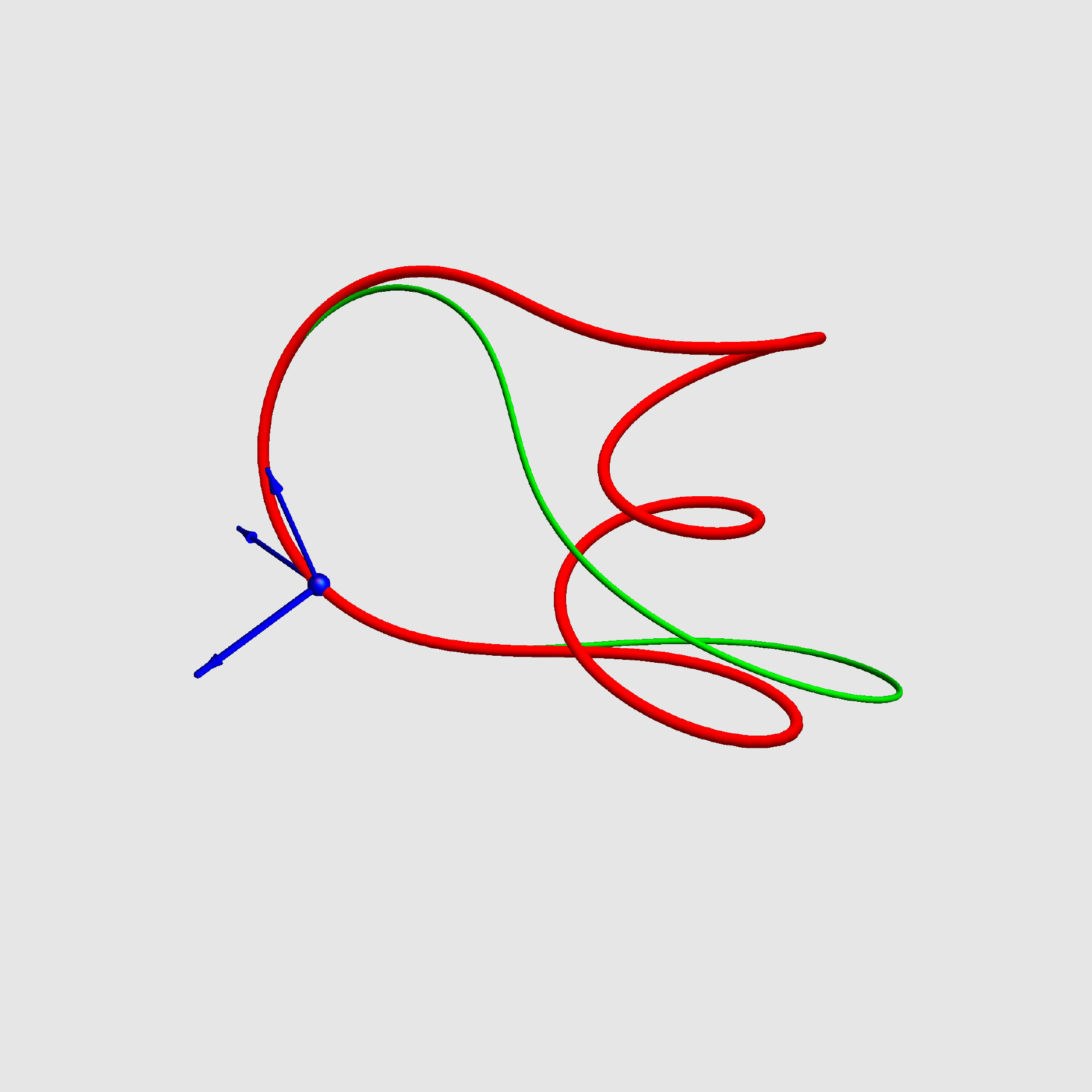}
\caption{\small{A cycle and its Lagrangian projection on the left; a generic curve (red) and one of its osculating cycles (green) on the right.}}\label{FIG1}
\end{center}
\end{figure}

\begin{defn}{A generic Legendrian curve whose strain density is identically equal to $1$ is said {\it parametererized by its natural parameter}. The quadratic Fubini's density of a natural parameterization $\gamma$ is called the {\it cr-curvature} of $\gamma$. We adopt the notation $\kappa$ to denote the cr-curvature.
}\end{defn}

\begin{remark}\label{parameter}{Given a generic Legendrian curve $\gamma : {\rm I}\to {\mathcal S}$, there is a change of parameter $h:{\rm I}\to {\rm J}$ such that $\gamma\circ h^{-1}$ is parameterized by the natural parameter.The natural parameters differ by an additive constant, thus they define a {\it unimodular affine structure},  i.e. an atlas of ${\rm I}$ whose transition functions are special affine transformations. Note that, the natural parameters induce a {\it canonical orientation} on a generic Legendrian curve.}\end{remark}

\begin{defn}{A {\it moving frame} along $\gamma : {\rm I}\to  \mathcal{S}$ is a lift of $\gamma$ to ${\rm G}$, that is a smooth map 
${\bf B} : {\rm I}\to  \rm{G}$ such that $\pi_0\circ {\bf B}=\gamma$. If ${\bf B}$ is a moving frame, any other is given by 
${\bf B}\cdot {\bf Y}(\rho,\phi,z,r)$, where $\rho,\phi,r:{\rm I}\to \R$, $z:{\rm I}\to \C$ are smooth functions and ${\bf Y}(\rho,\phi,z,r):{\rm I}\to {\rm G}_0$ is as in (\ref{gauge}). Given a moving frame ${\bf B}$ we denote by $\mathcal{B}$  the $\mathfrak{g}$-valued smooth function such that
$\mathcal{B}= {\bf B}^{-1}\cdot {\bf B}'$. If ${\bf B}$  and $\widetilde{{\bf B}}$ are two moving frames along $\gamma$ and if  $\widetilde{{\bf B}}= {\bf B} \cdot {\bf Y}(\rho,\phi,z,r)$, then $\widetilde{\mathcal{B}}= {\bf Y}^{-1}\cdot \mathcal{B}\cdot {\bf Y} + {\bf Y}^{-1}{\bf Y}'$.}\end{defn}

\begin{defn}{Let  $\Gamma$ be a normalized lift and ${\rm B}_j:{\rm I}\to \C^{2,1}\setminus \{0\}$, $j=1,2,3$, be defined by $\mathrm{B}_1=\Gamma$, $\mathrm{B}_2=\Gamma'$ and $\mathrm{B}_3=-i\left(\Gamma'' + b \Gamma\right)$ (cfr. (\ref{Wilc})).
From the proof of Proposition \ref{S2.1L2} one sees that $({\rm B}_1,{\rm B}_2,{\rm B}_3)|_t$ is a light-cone basis of $\C^{2,1}$, for every $t\in {\rm I}$. Then,
${\bf B}=({\rm B}_1,{\rm B}_2,{\rm B}_3):{\rm I}\to {\rm G}$
is a moving frame, the {\it Wilczynski frame} along $\gamma$. If $\widetilde{{\bf B}}$ is another Wilczynski frame, then $\widetilde{{\bf B}}=\varepsilon {\bf B}$, where 
$\varepsilon$ is a cubic root of the unity. The map ${\mathcal B}$  of a Wilczynski frame can be written as
\begin{equation}\label{S2.2.F4}{\mathcal B}(a,b)= {\bf E}_2^1+i{\bf E}^2_3+b(i{\bf E}^3_2-{\bf E}^2_1)+a{\bf E}^3_1.\end{equation}}\end{defn}

\begin{defn}{Let $\gamma$ be a Legendrian curve. The function
\begin{equation}\label{strdn}\begin{split}
\mathtt{t}=&\frac{4400}{81} a a'^3 a''+a^2 \left(-\frac{400}{27} b a'^3-\frac{200}{9} a' a''^2-\frac{400}{27}
a'^2 a^{(3)}\right)+\\
&+a^3 \left(\frac{25}{3} a'^2 b'+\frac{50}{3} b a' a''+\frac{50}{9}
a'' a^{(3)}+\frac{25}{9} a' a^{(4)}\right)+\\
&+a^4 \left(-\frac{16}{3} b^2 a'-5 b' a''-3 a' b''-\frac{10}{3} b a^{(3)}-\frac{1}{3}
a^{(5)}\right)+\\
&+a^5 \left(8 b b'+b^{(3)}\right)-\frac{6160}{243} a'^5.
\end{split}
\end{equation}
and the differential form ${\mathfrak t}={\mathtt t}dt^{20}$ are called
the {\it stress density} and the {\it stress tensor} of $\gamma$ respectively.}\end{defn}

\noindent Using Proposition \ref{S2.1L2} and with elementary but tedious computations, one can easily prove that, if $\widetilde{\gamma}=\gamma\circ h$ is a reparameterization of $\gamma$, then $\widetilde{\mathfrak{t}}=h^*(\mathfrak{t})$.

\section{The strain functional}\label{3}
\subsection{Admissible variations}\label{S3.1} An {\it admissible variation} of a Legendrian curve ${\gamma}:{\rm I}\to \mathcal{S}$ is a smooth map ${\bf g}:{\mathcal R}_{\epsilon}\to \mathcal{S}$ defined on an open rectangle ${\mathcal R}_{\epsilon}={\rm I}\times (-\epsilon,\epsilon)$, such that

\noindent $\bullet$ ${\bf g}(t,0)=\gamma(t)$, for every $t\in {\rm I}$;

\noindent $\bullet$ $g_{\tau}: t\in {\rm I}\to {\bf g}(t,\tau)\in \mathcal{S}$ is a Legendrian curve, $\forall \tau \in (-\epsilon,\epsilon)$;

\noindent $\bullet$ the  variational vector field $\mathfrak{v}_{{\bf g}}:t\in {\rm I}\to {\bf g}_*|_{(t,0)}(\partial_{\tau})\in {\rm T}(\mathcal{S})$, is compactly supported; 

\noindent $\bullet$ if ${\mathtt s}_{\tau}$ is the  strain density of $g_{\tau}$ and ${\rm K}_{\bf g}$ is the support of $\mathfrak{v}_{{\bf g}}$, then  
$$\mathfrak{S}_{\bf{g}}:\tau\in (-\epsilon,\epsilon)\to \int_{{\rm K}_{\bf g}}{\mathtt s}_{\tau}dt\in \R$$
is differentiable at $\tau=0$.

\noindent If ${\bf g}$ is an admissible variation, then there exist a smooth map ${\bf B}_{\bf {g}}:  \mathcal{R}_{\epsilon}\to  {\rm G}$ such that ${\bf B}_{\tau}:t\in {\rm I}\to {\bf B}_{\bf {g}}(t,\tau)\in {\rm G}$ is a Wilczynski frame of $g_{\tau}$, for every $\tau\in (-\epsilon,\epsilon)$. We call ${\bf B}_{\bf {g}}$ a {\it Wilczynski frame} along ${\bf g}$. We denote by $a_{\tau}$ and $b_{\tau}$ the Fubini's densities of $g_{\tau}$ and we put $a_{{\bf g}}(t,\tau)=a_{\tau}(t)$, $b_{{\bf g}}(t,\tau)=b_{\tau}(t)$,  ${\mathtt s}_{{\bf g}}(t,\tau)=\mathtt{s}_{\tau}(t)$. Let $\widetilde{{\mathcal B}},\mathcal{V}:\mathcal{R}_{\epsilon}\to \mathfrak{g}$ be defined by
\begin{equation}\label{S3.1F1}{\bf B}_{\bf {g}}^{-1}d{\bf B}_{\bf {g}} = \widetilde{{\mathcal B}}dt + \mathcal{V}d\tau,\end{equation}
$\widetilde{\mathfrak{r}}^h_k+i\widetilde{{\mathfrak s}}^h_k$ be the entries of $\mathcal{V}$ and
$\mathfrak{r}^h_k, {\mathfrak s}^h_k:{\rm I}\to \R$ be given by $\mathfrak{r}^h_k(t)=\widetilde{\mathfrak{r}}^h_k(t,0)$ and by $\mathfrak{s}^h_k(t)=\widetilde{\mathfrak {s}}^h_k(t,0)$. Differentiating (\ref{S3.1F1}) we get $\partial_{\tau}{\mathcal B}-\partial_t\mathcal{V}=[{\mathcal B},\mathcal{V}]$. In turn, this implies
\begin{equation}\label{S3.1F2}\begin{split}
\partial_{\tau} a_{{\bf g}}\big|_{(t,0)}=&\frac{1}{6}\Big(6a'\mathfrak{r}^2_1 +18a
(\mathfrak{r}^2_1)'-(16bb'+2b^{(3)})\mathfrak{r}^3_1-\\
&-(16b^2+9b'')(\mathfrak{r}^3_1)'
-15b'(\mathfrak{r}^3_1)''-10b(\mathfrak{r}^3_1)^{(3)}-(\mathfrak{r}^3_1)^{(5)}\Big)
\end{split}
\end{equation}
\subsection{The strain functional and its critical curves}\label{S3.2}

Let ${\rm J}\subset {\rm I}$ be a closed interval. The integral
$$\mathfrak{S}_J(\gamma)=\int_J {\mathtt s}dt,$$
is the {\it total strain} of the Legendrian arc $\gamma({\rm J})$. It measures  of how much a Lagrangian arc is far from being a cycle. By construction, is invariant by cr-transformations and reparameterizations.

\begin{defn}{A Legendrian curve  $\gamma$ is {\it critical for the total strain functional} if $\mathfrak{S}_{\bf{g}}'|_{0}=0$,
for every admissible variation.}\end{defn}

\noindent {\bf Theorem A.}\hspace{1pt}{\it A Legendrian curve is critical for the total strain functional if and only if its stress tensor is zero. Furthermore, a critical curve is either a cycle or else is generic.
}
\begin{proof}{The proof consists of three steps.

\noindent {\bf Step I}. We show that if $\gamma$ is critical, then its stress tensor vanishes.

\noindent We begin by proving a preliminary result: suppose that $\gamma:{\rm I}\to {\mathcal S}$ is not a cycle. Put ${\rm I}_*=\{t\in {\rm I}: a(t)\neq 0\}$ and let ${\rm K}=[t_0,t_1]\subset {\rm I}_*$ be a closed interval. 
If $w : {\rm I}\to \R$ is a smooth function such that ${\rm supp}(w) \subset (t_0,t_1)$, then there exist an admissible variation ${\bf g}$ such that 
\begin{equation}\label{vd}\mathfrak{S}_{\bf{g}}'|_{0}= \int_{{\rm K}}{\mathtt t}(t)\cdot w(t)dt.\end{equation}
Firstly, we construct the variation. Without loss of generality, we assume ${\rm P}_{\infty}\notin |[\gamma]|$. With a possible change of parameter, we can suppose that $a|_{{\rm K}}>0$. Then, there is a regular plane curve $\alpha:t\in {\rm I}\to x(t)+iy(t)\in \C\cong \R^2$ such that 
$$\gamma(t)=[^t(1,\alpha(t),z(t)+\frac{i}{2}|\alpha(t)|^2)],\quad z(t)=\int_{t_0}^t(x'y-xy')du + c.$$
The constant $c$ can be put equal to $0$. Consider the moving frame ${\bf H}_{\gamma}:{\rm I}\to {\rm G}$ along $\gamma$ defined by ${\bf H}_{\gamma}={\rm Id}_{3\times 3}+\alpha {\bf E}^1_2+i\overline{\alpha}{\bf E}^2_3+(z+\frac{i}{2}|\alpha|^2){\bf E}^1_3$. Let $\rho,\phi,r:{\rm I}\to \R$, $\rho>0$ and ${\rm p}:{\rm I}\to \C$ be smooth functions such that, ${\bf B}_{\gamma}={\bf H}_{\gamma}\cdot {\bf Y}(\rho,\phi,{\rm p},r)$ is a  Wilczynski frame along $\gamma$. We put
$$\eta=\frac{2187}{2\|\alpha'\|^2}\dfrac{d}{dt}\left(\rho^{-2}a^{17/3}w\right).$$
By construction, ${\rm supp}(\eta)\subseteq {\rm supp}(w)$. Then we define
\begin{equation}\label{av}\begin{cases}
\beta(t,\tau)=&\alpha(t)+i\tau\eta(t)\alpha'(t),\\
u(t,\tau)=&z(t)+\tau\left(\frac{2187w(t)\cdot a(t)^{17/3}}{\rho(t)^2}-\eta(t){\rm Re}(\overline{\alpha(t)}\alpha'(t)) \right)+\\
&+ \tau^2\int_{t_0}^t \eta^2(s){\rm Im}(\overline{\alpha''(s)}\alpha'(s))ds
\end{cases}
\end{equation}
Choosing $\epsilon>0$ sufficiently small,  and putting ${\mathcal R}_{\epsilon}={\rm I}\times (-\epsilon,\epsilon)$, the map
$${\bf g}:(t,\tau)\in{\mathcal R}_{\epsilon}\to [(1,\beta(t,\tau),u(t,\tau)+\frac{i}{2}|\beta(t,\tau)|^2)^t]\in {\mathcal S}$$
is an admissible variation of $\gamma$. Without loss of generality we may suppose that $a_{\bf g}$ is strictly positive on ${\rm J}\times (-\epsilon,\epsilon)$, where ${\rm J}$ is an open interval such that ${\rm supp}(w)\subset {\rm J}\subset {\rm K}$. We show that ${\bf g}$ satisfies (\ref{vd}).
Let ${\bf H}_{\bf g}: \rm{I}\times (-\epsilon,\epsilon)\to {\rm G}$ be the moving frame along $\bf{g}$ defined by
${\bf H}_{{\bf g}}={\rm Id}_{3\times 3}+\beta {\bf E}^1_2+i\overline{\beta}{\bf E}^2_3+(u+\frac{i}{2}|\beta|^2){\bf E}^1_3$. From (\ref{av}) it follows that
\begin{equation}\label{S3.2.F1}{\bf H}_{{\bf g}}(t,\tau)={\bf H}_{\gamma}{\it (t)}+\tau {\bf L}_{1}{\it (t)} + \tau^2 {\bf L}_2{\it (t)},\end{equation}
where ${\bf L}_1 = \eta(i\alpha'{\bf E}^1_2+\overline{\alpha'}{\bf E}^2_3)+\lambda {\bf E}^1_3$ and
$\lambda=2187\rho^{-2}w\cdot a^{17/3}-\eta \overline{\alpha}\alpha'$. Then, there exist smooth functions $\widetilde{\rho},\widetilde{\phi},\widetilde{r}:\mathcal{R}_{\epsilon}\to \R$, $\widetilde{\rho}>0$ and $\widetilde{{\rm p}}:\mathcal{R}_{\epsilon}\to \C$ such that
${\bf B}_{{\bf g}}= {\bf H}_{\bf g}\cdot {\bf Y}_(\widetilde{\rho},\widetilde{\phi},\widetilde{{\rm p}},\widetilde{r})$ is a Wilczynski frame along ${\bf g}$ and that ${\bf B}_{{\bf g}}|_{(t,0)}={\bf B}_{\gamma}|_{t}$, for every $t\in {\rm I}$. By construction, ${\bf Y}(\widetilde{\rho},\widetilde{\phi},\widetilde{{\rm p}},\widetilde{r})|_{(t,0)}={\bf Y}_{\gamma}|_{t}$, where ${\bf Y}_{\gamma}={\bf Y}(\rho,\phi,{\rm p},r)$. Using (\ref{S3.2.F1}) we obtain
$${\mathcal V}\big|_{(t,0)} = ({\bf B}_{{\bf g}}^{-1}\partial_{\tau}{\bf B}_{{\bf g}})\big|_{(t,0)} = {\bf Y}_{\gamma}^{-1} \partial_{\tau}{\bf Y}\big|_{(t,0)}+{\bf Y}_{\gamma}^{-1}\cdot {\bf H}_{{\bf \gamma}}^{-1}\cdot {\bf L}_1\cdot {\bf Y}_{\gamma}.
$$
Since ${\bf Y}_{\gamma}(t)\in {\rm G}_0$, for every $t\in {\rm I}$, then $({\bf Y}_{\gamma}^{-1}\partial_{\tau}{\bf Y}|_{(t,0)})^2_1=({\bf Y}_{\gamma}^{-1}\partial_{\tau}{\bf Y}|_{(t,0)})^3_1=0$.
It is now a computational matter to check that
\[
\begin{split}({\bf Y}_{\gamma}^{-1}\cdot  {\bf H}_{{\bf \gamma}}^{-1}\cdot{\bf L}_1\cdot {\bf Y}_{\gamma})^2_1  &= -2187 a^{17/3}we^{2i\theta}{\rm p}+ie^{3i\theta}\rho\eta\alpha',\\
({\bf Y}_{\gamma}^{-1}\cdot  {\bf H}_{{\bf \gamma}}^{-1}\cdot {\bf L}_1\cdot {\bf Y}_{\gamma})^3_1 &=2187 a^{17/3}w.
\end{split}
\]
This implies
\[{\mathcal V}^2_1\big|_{(t,0)}=\mathfrak{r}^2_1+i\mathfrak{s}^2_1=-2187 a^{17/3}we^{2i\theta}{\rm p}+ie^{3i\theta}\rho\eta\alpha',\quad 
{\mathcal V}^3_1\big|_{(t,0)}={\mathfrak r}^3_1=2187 a^{17/3}w.
\]
Using (\ref{S3.1F2}) and proceeding with elementary but rather tedious calculations, we get
$\partial_{\tau}{\mathtt s}_{{\bf g}}|_{{\rm{J}\times\{0\}}}\cong_{d,{\rm K}}({\mathtt t}\cdot w)|_{{\rm J}}$
where $f\cong_{d,{\rm {\rm K}}} g$ means that $f=g+r'$, for some smooth function $r$ such that ${\rm supp}(r)\subset {\rm K}$.
Then,
\[\mathfrak{S}_{\bf{g}}'|_{0}=\partial_{\tau}\left( \int_{{\rm K}_{\bf g}}{\mathtt s}_{{\bf g}}dt\right)\Big|_{\tau=0}=
 \int_{{\rm K}_{\bf g}}\partial_{\tau}{\mathtt s}_{{\bf g}}\big|_{(t,0)}dt =\int_{{\rm K}}{\mathtt t}\cdot w dt.
\]
We are now in a position to conclude the proof of the first step. Suppose that $\gamma$ is a critical curve. If $\gamma$ is a cycle there is nothing to prove. If $\gamma$ is not a cycle we denote by $\rm{I}_a$ be the zero set of $a$ and we put ${\rm I}_*={\rm I}\setminus \rm{I}_a$. Then, our preliminary discussion implies that ${\mathtt t}$ is zero on ${\rm I}_*$. Obviously, ${\mathtt t}$ is zero on the interior of $\rm{I}_a$. Hence, ${\mathtt t}$ is everywhere zero. 
 
\noindent {\bf Step II}. We prove that if $\mathfrak{t}=0$, then $\gamma$ is either a cycle or is generic. Preliminarily we show that for every $t_*\in {\rm I}$ there exist an open interval ${\rm J}\subset {\rm I}$ containing $t_*$ and a smooth, strictly increasing function $h:{\rm J}\to \R$ such that the quadratic Fubini's form  of $\gamma\circ h^{-1}:h({\rm J})\to \mathcal{S}$ is zero. The collection of all such functions defines a {\it projective structure} on ${\rm I}$, ie an atlas $\mathfrak{P}_{\gamma}=\{({\rm J}_{\alpha},h_{\alpha})\}_{\alpha\in {\rm A}}$  whose transition functions are orientation-preserving linear fractional transformations. This assertion can be justified as follows: let $b$ be the quadratic Fubini's density of $\gamma$. For every $t_*\in {\rm I}$ and every $h_0,h_1,h_2\in \R$, $h_1>0$, we consider the solution of the Cauchy problem
\[ \frac{h'''}{h'}-\frac{3}{2}\frac{h''^2}{h'^2}+h'^2(b\circ h) =0,\quad
h(t_*)=h_0,\quad h'(t_*)=h_1>0,\quad h''(t_*)=h_2.\]
Shrinking the interval of definition we assume that $h$ is strictly increasing. Proposition \ref{S2.1L2} implies that the Fubini's quadratic form of $\gamma\circ h^{-1}$ is identically zero. We call $h$ a projective chart. We prove that the family $\mathfrak{P}_{\gamma}=\{({\rm I}_{\alpha},h_{\alpha})\}_{\alpha\in {\rm A}}$ of all projective charts is a projective structure on ${\rm I}$. Let $h_{\alpha} : {\rm I}_{\alpha}\to \R$ and $h_{\beta}:{\rm I}_{\beta}\to \R$ be two projective charts such that 
${\rm I}_{\alpha}\cap {\rm I}_{\beta}\neq \emptyset$ and 
$f_{\alpha}^{\beta}=h_{\alpha}\circ h_{\beta}^{-1}$ be the corresponding transition function. Since the Fubini's quadratic forms of 
$\gamma\circ h_{\beta}^{-1}$ and $\gamma\circ h_{\alpha}^{-1}$ are both identically zero, (\ref{tldl})
implies that ${\mathtt S}(f_{\alpha}^{\beta})=0$. Then, $f_{\alpha}^{\beta}$ is a strictly increasing, linear fractional function. Using this projective structure we show that if the stress tensor is zero and if $\gamma(t_*)$ is a sextactic point, then $a$ and all its derivatives vanish at $t_*$.
From Proposition \ref{S2.1L2} and making use of the projective structure, we may assume ${\mathfrak b}=0$. For every $n\in {\mathbb N}$ we put
\[\begin{split}
c_{1,n}& =\frac{\Gamma(5n+1)}{\Gamma(n+1)^5},\\ 
c_{2,n}&= \frac{\Gamma(5n+1)}{\Gamma(n)\Gamma(n+1)^3\Gamma(n+2)},\hspace{30pt} c_{3,n}=\frac{\Gamma(5n+1)}{\Gamma(n)^2\Gamma(n+1)\Gamma(n+2)^2},\\
c_{4,n}&=\frac{\Gamma(5n+1)}{\Gamma(n-1)\Gamma(n+1)^2\Gamma(n+2)^2},\hspace{14pt} c_{5,n}=\frac{\Gamma(5n+1)}{\Gamma(n-1)\Gamma(n)\Gamma(n+2)^3},\\
c_{6,n}&=\frac{\Gamma(5n+1)}{\Gamma(n-2)\Gamma(n+1)\Gamma(n+2)^3},\hspace{16pt}  c_{7,n}=\frac{\Gamma(5n+1)}{\Gamma(n-3)\Gamma(n+2)^4},
\end{split}
\]
where, in this context, $\Gamma$ is the Euler gamma function. Note that
\[\begin{split}
\mathfrak{c}_n &= 6160 c_{1,n}-13200 c_{2,n}+5400 c_{3,n}+3600 c_{4,n}-1350 c_{5,n}-675 c_{6,n}+81 c_{7,n}\\
&=\frac{4\Gamma(5n+1)(4n^4+76n^3+519n^2+1501n+1540)}{\Gamma(1+n)\Gamma(n+2)^4}>0.
\end{split}\]
Let $f$ be a smooth function and denote by $\equiv_n$  the equality of functions modulo the ideal generated by $f,f',...,f^{(n)}$. Then, proceeding by induction, we see that
\begin{equation}\label{S2.4F2}\begin{cases}
(f'^5)^{(5n)}\equiv_n c_{1,n}(f^{(n+1)})^5,\\
(f f'^3 f'')^{(5n)}\equiv_n c_{2,n}(f^{(n+1)})^5,\\
(f^2f'f''^2)^{(5n)}\equiv_n c_{3,n}(f^{(n+1)})^5,\\
(f^2f'^2f^{(3)})^{(5n)}\equiv_n c_{4,n}(f^{(n+1)})^5,\\
(f^3f''f^{(3)})^{(5n)}\equiv_n c_{5,n}(f^{(n+1)})^5,\\
(f^3f'f^{(4)})^{(5n)}\equiv_n c_{6,n}(f^{(n+1)})^5\\
(f^4f^{(5)})^{(5n)}\equiv_n c_{7,n}(f^{(n+1)})^5.
\end{cases}
\end{equation}
Putting $b=0$ in (\ref{strdn}), the stress density takes the form
\begin{equation}\label{S2.4F3}\begin{split}{\mathtt t} = & -6160a'^5+13220 a a'^3 a''-5400a^2a'a''^2-3600a^2a'^2a^{(3)}+\\
&+1350a^3a''a^{(3)}+675a^3a'a^{(4)}-81a^4a^{(5)}.
\end{split}
\end{equation}
If ${\mathtt t}=0$ and $a|_{t_*}=0$, then (\ref{S2.4F3}) implies $a'|_{t_*}=0$. By induction, suppose that $a^{(k)}|_{t_*}=0$, for every $k=0,...,n$. From (\ref{S2.4F2}) and (\ref{S2.4F3}), we obtain
$$0 = \frac{d^{5n}\mathtt{t}}{dt^{5n}}\Big|_{t_*}=-\mathfrak{c}_n\cdot \big(a^{(n+1)}\big|_{t_*}\big)^5+\mathfrak{r}\big|_{t_*},$$
where $\mathfrak{r}$ belongs to the ideal spanned by $a,a',...,a^{(n)}$. By the inductive hypothesis, $\mathfrak{r}|_{t_*}=0$. Since $\mathfrak{c}_n\neq 0$, we have $a^{(n+1)}|_{t_*}=0$. Thus, $a$ and all its derivatives vanish at $t_*$. We conclude the proof of the second step. By contradiction, suppose that $\mathfrak{t}=0$ and that ${\rm I}_r=\{t\in {\rm I}: a(t)\neq 0\}$ is a non-empty proper subset of ${\rm I}$. Let  
${\rm I}^*_r$ be a connected component of ${\rm I}_r$. There are two possibilities: either ${\rm sup}({\rm I}^*_r)<{\rm sup}({\rm I})$ or ${\rm inf}({\rm I}^*_r)>{\rm inf}({\rm I})$. Consider the first case, ie $t_*={\rm sup}({\rm I}^*_r)<{\rm sup}({\rm I})$. Take $\epsilon >0$ such that
${\rm J}=(-\epsilon+t_*,\epsilon+t_*)\subset {\rm I}$ and that $a(t)\neq 0$, for every $t\in (-\epsilon+t_*,t_*)$.
We may assume that ${\rm J}$ is the domain of definition of a chart $\phi:{\rm J}\to \R$ of the projective atlas ${\mathfrak P}_{\gamma}$ such that $\phi(t_*)=0$.  We put ${\rm J}'=(-\epsilon',\epsilon'):=\phi({\rm J})$. Then, $\widetilde{\gamma}=\gamma\circ \phi^{-1}: {\rm J}'\to {\mathcal S}$ is a Legendrian curve with zero quadratic differential and zero stress tensor. In addition, the cubic density $\widetilde{a}$ of $\widetilde{\gamma}$ vanishes at $t=0$ and $\widetilde{a}|_{t}\neq 0$, for every $t\in (-\epsilon',0)$. By our previous discussion, we know that $\widetilde{a}^{(n)}|_{t=0}=0$, for every $n\in {\mathbb N}$. Denote by $\widetilde{{\bf B}}:{\rm J}'\to {\rm G}$ a Wilczynski frame along $\widetilde{\gamma}$. 
Without loss of generality, we may assume that $\widetilde{{\bf B}}|_0={\rm Id}_{3\times 3}$. Let 
$\widehat{a}: {\rm J}'\to \R$ be the smooth function defined by 
\[\begin{cases}\widehat{a}|_t&=\widetilde{a}|_t,\quad {\rm if}\hspace{2pt} t\in (-\epsilon,0),\\
\widehat{a}|_t & =0,\hspace{17pt} {\rm if}\hspace{2pt} t\in [0,\epsilon).\end{cases}\]
Retaining the notation (\ref{S2.2.F4}), we put $\widehat{\mathcal {B}}=\mathcal {B}_{\widehat{a},0}$. Denote by $\widehat{{\bf B}}: {\rm J}'\to {\rm G}$ the solution of the linear system  
\begin{equation}\label{S2.4F4}\widehat{{\bf B}}^{-1}\widehat{{\bf B}}'= \widehat{\mathcal {B}},\quad   \widehat{\bf {B}}|_0= {\rm Id}_{3\times 3}.\end{equation}
Then, $\widehat{\gamma}:{\rm J}'\to [\widehat{{\rm B}}_1]\in {\mathcal S}$ is a Legendrian curve and
$\widehat{{\bf B}}$ is a Wilczynski frame along $\widehat{\gamma}$. By the Cartan-Darboux congruence Theorem \cite{JMN}, we have $\widehat{{\bf B}}_{(-\epsilon,0]}=\widetilde{{\bf B}}_{(-\epsilon,0]}$. Note that $\widehat{a}$ is the cubic density of $\widehat{\gamma}$ and that $\widehat{{\mathfrak b}}=0$  and  $\widehat{{\mathfrak t}}=0$. We put
$$k=\frac{7\widehat{a}'^2-6\widehat{a}\widehat{a}''}{18\widehat{a}^{8/3}},\quad \dot{k}=\frac{1}{\sqrt[3]{\widehat{a}}}k',\quad \ddot{k}=\frac{1}{\sqrt[3]{\widehat{a}}}\dot{k}'
$$
and we define ${\bf Y}:(-\epsilon',0)\to {\rm G}_0$ and ${\rm H}: (-\epsilon',0)\to {\mathfrak h}$ by
$${\bf Y}={\bf Y}\left(\frac{1}{\sqrt[3]{|\widehat{a}|}},\frac{1-sign(\widehat{a})}{2}\pi,i\frac{\widehat{a}'}{3\widehat{a}},0\right)$$
and by
$${\rm H}=2\left(i({\bf E}^1_3+{\bf E}^2_1)+{\bf E}^3_2\right)+2k\big(\frac{1}{3}({\bf E}_1^1-2{\bf E}_2^2+{\bf E}^3_3)-ik{\bf E}^3_1\big)+\frac{2}{3}\dot{k}({\bf E}^2_1+i{\bf E}_2^3)-\frac{2}{3}i\ddot{k}{\bf E}^3_1.
$$
Let $\Lambda : (-\epsilon',0)\to {\mathfrak h}$ be given by 
$\Lambda={\bf Y}^{-1}\cdot {\rm H}\cdot \bf{Y}$.
A direct computation shows that $\Lambda^3_1=2i\widehat{a}^{-2/3}$. Then, $\Lambda$ can't be extend smoothly on the whole interval ${\rm J}'$. On the other hand, $\widehat{{\mathfrak t}}=0$ implies  
$(\Lambda' +[\widehat{\mathcal{B}},\Lambda])|_{(-\epsilon',0)}=0$. Consequently,
$(\widehat{\bf{B}}\cdot \Lambda\cdot \widehat{\bf{B}}^{-1})|_{(-\epsilon',0)}={\mathfrak m}$,
where $\mathfrak{m}$ is a fixed element of ${\mathfrak h}$.
Then, $\widehat{\bf{B}}^{-1}\cdot {\mathfrak m}\cdot \widehat{\bf{B}}$  is a smooth extension of $\Lambda$ on ${\rm J}'$. We have thus come to a contradiction. If ${\rm inf}({\rm I}^*_r>{\rm inf}({\rm I})$ we can use similar arguments, coming to the same conclusion.

\noindent {\bf Step III}. We prove that if ${\mathfrak t}=0$, then $\gamma$ is critical. By the second step, if ${\mathfrak t}=0$, then either $\gamma$ is a cycle or else is generic. In the first case $\gamma$ is obviously critical. Assume that $\gamma$ is a natural parameterization of a generic Legendrian curve with ${\mathfrak t}=0$. Let ${\bf g}$ be an admissible variation defined on the open rectangle ${\mathcal R}_{\epsilon}$. Since $a_{\bf g}(t,0)=1$, for every $t$, then $a_{\bf g}$ is strictly positive on an open neighborhood of ${\rm I}\times \{0\}$. From (\ref{strdn}) and (\ref{S3.1F2}) we have 
\[
\partial_{\tau}{\mathtt s}_{{\bf g}}\big|_{(t,0)}\cong_{{\rm d},{\rm K}_{\bf g}}-\frac{1}{9}(\kappa^{(3)}+8\kappa \kappa')\big|_t\cdot \mathfrak{r}^3_1\big|_t= -\frac{1}{9}{\mathtt t}\mathfrak{r}^3_1 = 0.
\]
Then, $\mathfrak{S}_{\bf{g}}'|_{0}=0$. Consequently, a generic Legendrian curve with zero stress tensor is critical. This concludes the proof of the Theorem.}\end{proof}

\begin{remark}\label{S2.4R1}{Leaving aside the cycles, a Legendrian curve such that ${\mathfrak t}=0$ is generic. Thus, it can be parameterized by the natural parameter. Putting $a=1$ in (\ref{strdn}) one sees that the cr-curvature $\kappa$ is a solution of the third-order ode
\begin{equation}\label{S2.5F}\kappa'''+8\kappa\kappa'=0.\end{equation}
Therefore, either $\kappa$ is constant or else $-2\kappa/3$ is a real form of a Weierstrass $\wp$-function. In the first case $\gamma$ is an orbit of a $1$-parameter group of cr-transformations. If $\kappa$ is non-constant, the natural parameterization of the curve can be found by solving a system of linear ode whose coefficients are real forms of elliptic functions. This is a classical problem already studied by Picard \cite{Pi} at the turn of the nineteen century. It is known that the solutions can be written in terms of elliptic functions and incomplete elliptic integrals. In the last section we will explicitly address this problem when $\kappa$ is a periodic solution of 
(\ref{S2.5F}). }\end{remark}

\begin{defn}\label{S2.4R2}{Let $\gamma:{\rm I}\to {\mathcal S}$ be a natural parameterization of a generic curve,
$\bf{B}$ be a Wilczynski along $\gamma$ and $\Lambda:{\rm I}\to {\mathfrak h}$ be defined by 
\begin{equation}\label{Lambda}\Lambda=2\left(i({\bf E}^1_3+{\bf E}^2_1)+{\bf E}^3_2\right)+2\kappa\big(\frac{1}{3}({\bf E}_1^1-2{\bf E}_2^2+{\bf E}^3_3)-i\kappa{\bf E}^3_1\big)+\frac{2}{3}\kappa'({\bf E}^2_1+i{\bf E}_2^3)-\frac{2}{3}i\kappa''{\bf E}^3_1.\end{equation}
If ${\mathfrak t}=0$, then (\ref{S2.5F}) implies
\begin{equation}\label{cl}\bf{B}\cdot \Lambda \cdot \bf{B}^{-1}={\mathfrak m},\end{equation}
where ${\mathfrak m}$ is a fixed element of ${\mathfrak h}$, the {\it momentum} of $\gamma$.
}\end{defn}

\begin{remark}{The conservation law (\ref{cl}) has the following theoretical explanation. Put $\mathrm{Z}=\mathrm{G}\times \R^3$, denote by $k,\dot{k},\ddot{k}$ the fiber coordinates and by $(\alpha^i_j+i\beta^i_j)_{1\le,i,j\le 3}$ the pull-back on ${\rm Z}$ of the Maurer-Cartan form of ${\rm G}$. Let $\gamma : {\rm I}\to {\mathcal S}$ be a natural parameterization of a generic Legendrian curve with cr-curvature $\kappa$, zero stress tensor and Wilczynski frame ${\bf B}$. Then, $s\to ({\bf B},\kappa,\kappa',\kappa'')|_s\in {\rm Z}$ is a lift of $\gamma$ to ${\rm Z}$, the {\it prolongation} of $\gamma$. The prolongations are integral curves of  the Reeb vector field $\mathfrak{X}$ of the contact form
$\chi = \alpha^2_1+(\alpha^1_3-\alpha^2_1)/3-(\ddot{k}+3k^2)\alpha^3_1/9+
2\dot{k}\beta^2_1/9+2\beta^1_1/3$.
The action of ${\rm G}$ on the left of ${\rm Z}$ is Hamiltonian and co-isotropic \cite{GM}. Using the pairing induced by the Killing form, the {\it contact momentum map} \cite{Ba,OR} of the action is given by
$({\bf B},k,\dot{k},\ddot{k})\in {\rm Z}\to i{\bf B}\cdot \Lambda(k,\dot{k},\ddot{k})\cdot {\bf B}^{-1}$. Hence, (\ref{cl}) is a consequence of the N\"other conservation theorem, i.e. that the momentum map is constant along the integral curves of $\mathfrak{X}$. The manifold ${\rm Z}$ and the contact form $\chi$ are build via the Griffith's approach to the calculus of variations \cite{Gr,GM}. Since the action is co-isotropic, then $\mathfrak{X}$ is collective completely integrable and, a fortiori, Liouville-integrable \cite{M1}. Hence, in principle, its integral curves can be found by quadratures linearizing the restriction of $\mathfrak{X}$ on the fibers of the momentum map.}\end{remark}

\section{Closed critical curves}\label{4}

\subsection{Closed critical curves with constant curvature}\label{4S1}
For every $q>1$ we put
\begin{equation}\label{rq}\begin{cases}{\mathtt r}(q)=\sqrt{2+4q-4\sqrt{q(1+q)}},\\
{\mathtt c}(q)= \dfrac{3(16+56{\mathtt r}(q)^4+{\mathtt r}(q)^8)}{2\left(2(-64+528{\mathtt r}(q)^4+132{\mathtt r}(q)^8-{\mathtt r}(q)^{12})\right)^{2/3}}.
\end{cases}\end{equation}
Then, ${\mathtt c}:(1,+\infty)\to (\frac{3}{2\sqrt[3]{4}},+\infty)$ is a smooth diffeomorphism.

\begin{thm}\label{S3.6.P1}{A generic Legendrian curve with constant cr curvature $\kappa=c$ is closed if and only if $c={\mathtt c}(q)$, where $q=m/n>1$ is rational number. Using the Heisenberg picture, such a curve is cr-equivalent to the solenoidal torus knot (see Figure \ref{FIG2}) of type $(-m,n)$ parameterized by
$$\widetilde{\gamma}_q : t\in \R\to {\rm R}_{Oz}(mt/n)\eta_{{\mathtt r}(q)}(t),$$
where ${\rm R}_{Oz}(\theta)$ is the rotation of an angle $\theta$ around the $Oz$-axis and $\eta_{{\mathtt r}(q)}$ is the parameterization of the elliptical profile of the standard Heisenberg Cyclide with parameter ${\mathtt r}(q)$, defined as in (\ref{elp}).
}\end{thm}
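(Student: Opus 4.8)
The plan is to exploit the conservation law \eqref{cl} from Definition~\ref{S2.4R2}: when $\kappa\equiv c$ is constant, the momentum matrix $\Lambda=\Lambda(c,0,0)\in\mathfrak h$ is constant, and \eqref{cl} says $\mathbf B(t)\cdot\Lambda\cdot\mathbf B(t)^{-1}=\mathfrak m$ for all $t$, where $\mathbf B$ is a Wilczynski frame. Since the Maurer--Cartan structure equation $\mathbf B^{-1}\mathbf B'=\mathcal B(1,c)$ has constant coefficients, $\mathbf B(t)=\mathbf B(0)\exp\bigl(t\,\mathcal B(1,c)\bigr)$, so $\gamma$ is precisely the orbit of the one-parameter subgroup generated by $\mathcal B(1,c)$ (as already asserted in Remark~\ref{S2.4R1}). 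The curve is therefore an integral curve of a Killing field; it is closed if and only if the one-parameter subgroup $\exp\bigl(t\,\mathcal B(1,c)\bigr)$, acting on the point $\gamma(0)\in\mathcal S$, is periodic, i.e. the flow returns $\gamma(0)$ to itself. First I would diagonalize $\mathcal B(1,c)$ over $\C$: because it is (conjugate to) an element of the Lie algebra $\mathfrak g\cong\mathfrak{su}(2,1)$ with $\mathbf B\Lambda\mathbf B^{-1}$ constant in $\mathfrak h$, one checks that for every $c>\frac{3}{2\sqrt[3]4}$ the relevant generator lies in a maximal compact torus (this is where the threshold on $c$ enters: $c=\frac3{2\sqrt[3]4}$ is the degenerate value), so its eigenvalues are $i\theta_1,i\theta_2,i\theta_3$ with $\theta_1+\theta_2+\theta_3=0$.

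The closure condition then becomes: the orbit is closed iff the ratios of the $\theta_j$ are rational, equivalently iff a single real parameter built from them is rational. I would parametrize the conjugacy class of generators by the invariant $q$ appearing in \eqref{rq}: a direct computation of the characteristic polynomial of $\mathcal B(1,c)$, or equivalently of $\Lambda(c,0,0)$ — whose eigenvalues are cubic-root-of-unity-type expressions in $c$ — expresses the eigenvalue ratio as a function of $c$; inverting via the diffeomorphism $\mathtt c:(1,\infty)\to(\tfrac3{2\sqrt[3]4},\infty)$ shows the orbit closes exactly when $q=\mathtt c^{-1}(c)=m/n$ is rational with $m>n>0$. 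For the explicit model, since all generators with the same $q$ are $\widehat{\rm G}$-conjugate and $\widehat{\rm G}$-conjugation is an equivalence of Legendrian curves, I may replace $\mathbf B(0)$ by the identity and conjugate $\mathcal B(1,c)$ into the standard torus $\mathrm T^2$ of \eqref{torus}–\eqref{U}; the resulting orbit is a curve on one of the standard Heisenberg Cyclides $\mathcal T_r$ of Definition~\ref{AS}. Tracking the two angular frequencies through the conjugation identifies $r=\mathtt r(q)$ and shows the curve winds $-m$ times around the $Oz$-axis while the elliptic profile parameter $\eta_r$ advances by $2\pi n$ (or the analogous bookkeeping), i.e. $\gamma$ is cr-equivalent to $\widetilde\gamma_q(t)={\rm R}_{Oz}(mt/n)\,\eta_{\mathtt r(q)}(t)$; checking that this is the $(-m,n)$ solenoidal torus knot is then a matter of reading off the winding numbers on $\mathcal T_{\mathtt r(q)}$.

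The step I expect to be the main obstacle is the explicit matching of parameters: propagating the eigenvalue data of $\mathcal B(1,c)$ — equivalently the two angular speeds of the one-parameter subgroup after conjugation into $\mathrm T^2$ — through the formulas \eqref{elp} for the elliptic profile, so as to pin down simultaneously that the correct Cyclide parameter is exactly $\mathtt r(q)$ as in \eqref{rq} and that the rotation speed around the $Oz$-axis is exactly $m/n$ times the profile speed. This is the computation that forces the precise algebraic form of $\mathtt r(q)$ and $\mathtt c(q)$; everything else (the conservation law, the reduction to a group orbit, the rationality criterion for closure, the conjugacy of generators with equal $q$) is structural. I would organize the parameter-matching by first computing the $\widehat{\rm G}$-invariants of $\mathcal B(1,c)$ (its characteristic polynomial) and of the generator $\partial_t\bigl({\rm R}_{Oz}(mt/n)\eta_r(t)\bigr)\big|_{t=0}$ lifted to $\mathfrak g$, then equating invariants to solve for $r$ and the frequency ratio in terms of $c$, and finally inverting through $\mathtt c$.
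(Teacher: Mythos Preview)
Your outline is correct and, for the closure criterion, essentially identical to the paper's: the Wilczynski frame satisfies $\mathbf B^{-1}\mathbf B'=\mathcal B(1,c)$ with constant right-hand side, so the curve is the orbit of $\mathrm P_0$ under the one-parameter group $\exp(t\,\mathcal K_c)$ (the paper writes the generator as $\mathcal K_c=({\bf E}^1_2+{\bf E}^3_1+i{\bf E}^2_3)-c({\bf E}^2_1-i{\bf E}^3_2)$); the eigenvalues are purely imaginary precisely when the discriminant of the characteristic polynomial is negative, i.e.\ $c>3/2\sqrt[3]{4}$, which is exactly the range of $\mathtt c$. Invoking the momentum conservation law \eqref{cl} is harmless but unnecessary here: constancy of $\mathcal B(1,c)$ alone already gives $\mathbf B(t)=\mathbf B(0)\exp(t\,\mathcal B(1,c))$.

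Where you diverge from the paper is in identifying the explicit model, and the paper's route is considerably shorter than the one you describe. You propose to conjugate the generator into the standard torus, track the two angular frequencies through \eqref{elp}, and match parameters---the computation you yourself flag as the main obstacle. The paper bypasses this entirely by working backward: it takes the \emph{candidate} curve $\widetilde\gamma_q(t)=\mathrm R_{Oz}(qt)\,\eta_{\mathtt r(q)}(t)$, lifts it to $\mathcal S$ via $p_h^{-1}$, and computes its Fubini densities $a,b$ directly from the formulas \eqref{elp}. One finds
\[
a=-\frac{r^{12}-132r^8-528r^4+64}{6912\,r^6},\qquad b=\frac{r^8+56r^4+16}{384\,r^4},\qquad r=\mathtt r(q),
\]
so that $\kappa=b/a^{2/3}=\mathtt c(q)$ by the very definition \eqref{rq}. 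Since two generic Legendrian curves with the same (constant) cr-curvature are cr-equivalent, this already forces $\gamma_{\mathtt c(q)}\sim p_h^{-1}\circ\widetilde\gamma_q$; the closure condition $q\in\mathbb Q$ is then read off transparently from the formula for $\widetilde\gamma_q$ (the rotation angle $qt$ and the $2\pi$-periodic profile $\eta_r$ synchronize iff $q=m/n$). So your ``equating invariants'' idea is right in spirit, but the invariants to equate are the Fubini densities of the curve, not the characteristic polynomial of the generator---this is what makes the algebraic form of $\mathtt r(q)$ and $\mathtt c(q)$ fall out in one line rather than through a frequency-matching computation.
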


\begin{figure}[h]
\begin{center}
\includegraphics[height=6.2cm,width=6.2cm]{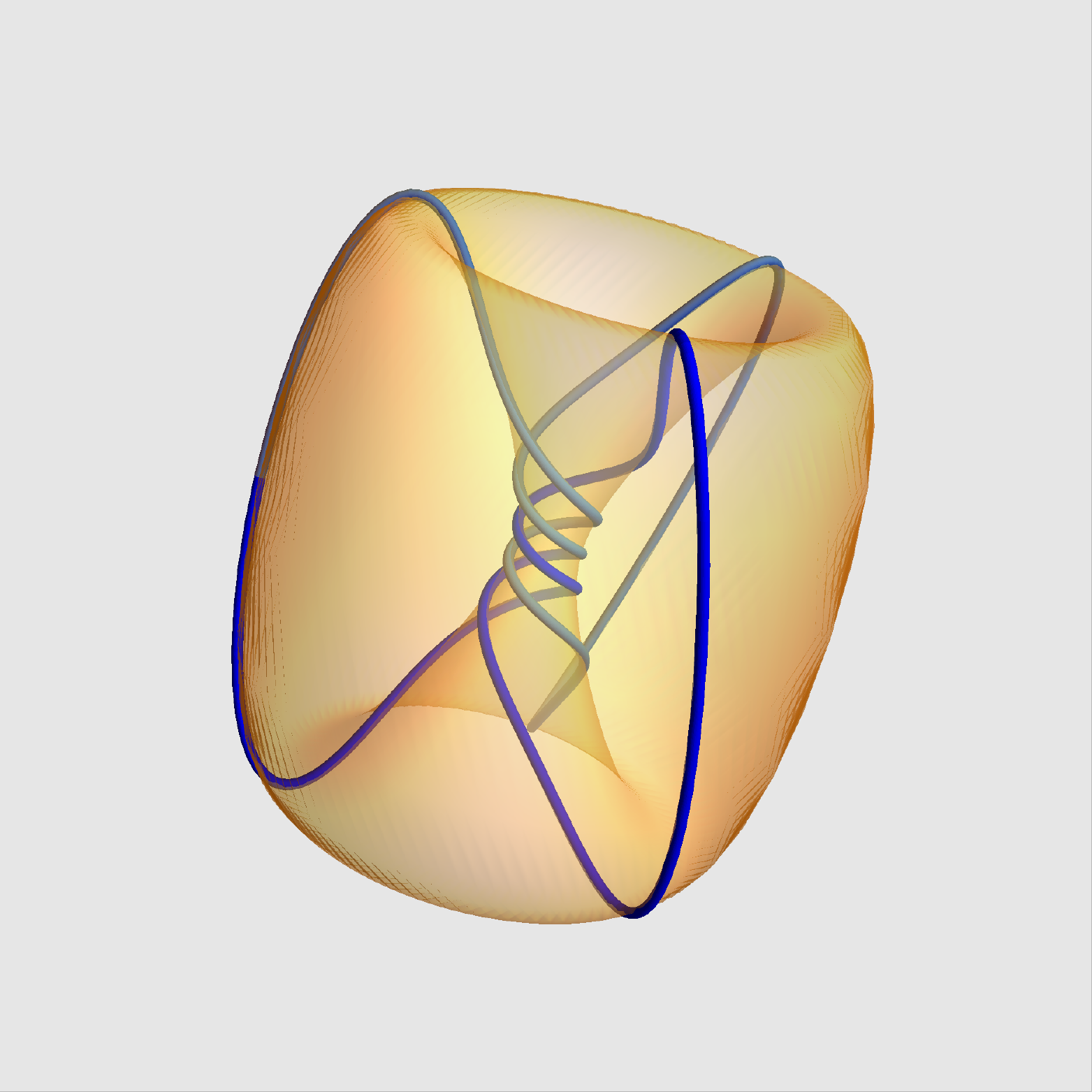}
\includegraphics[height=6.2cm,width=6.2cm]{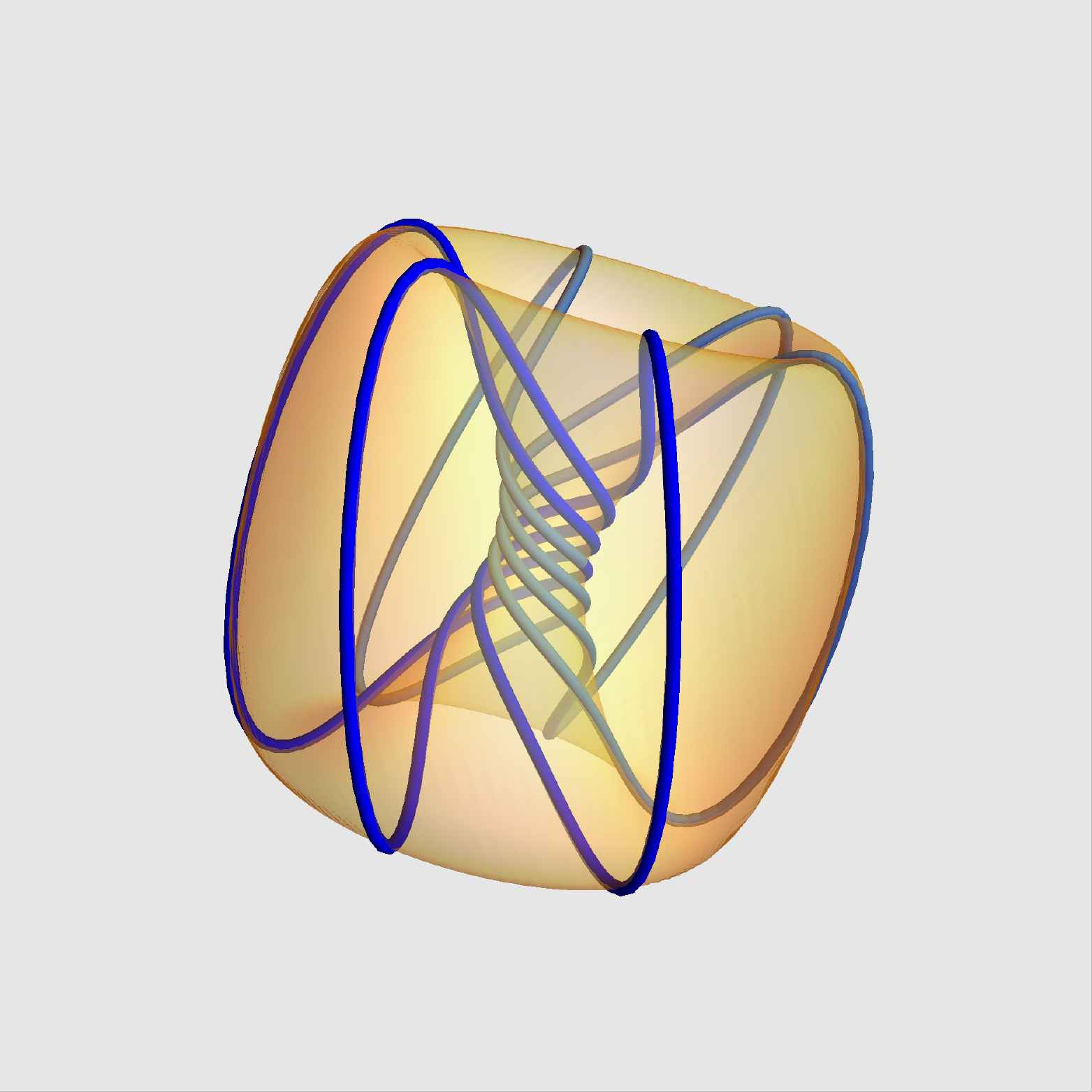}
\caption{\small{Generic Legendrian curves with constant curvature $c(5/3)\approx 1.69321$ (left) and $c(7/6)\approx 3.63111$ (right). }}\label{FIG2}
\end{center}
\end{figure}

\begin{proof}{A Legendrian curve with constant pseudo-conformal curvature $c$ is congruent to $\gamma_{c}:s\in \R\to {\rm Exp}(s{\mathcal K}_{c})\cdot {\rm P}_0$, where ${\mathcal K}_{c} = ({\bf E}^1_2+{\bf E}^3_1+i{\bf E}^2_3)-c({\bf E}^2_1-i{\bf E}^3_2)$. Thus, its trajectory can be closed if and only if $\mathcal{K}(c)$ has three purely imaginary roots, that is if and only if the discriminant $\Delta_{{\mathcal K}_{c}}$ of the characteristic polynomial of $\mathcal{K}_c$ is negative. It is an easy matter to check that $\Delta_{{\mathcal K}_{c}}<0$ if and only if $c>3/2\sqrt[3]{4}$, ie. if and only if $c=c(q)$, for a unique $q\in \R$, $q>1$. With an elementary calculation we see that
$\widetilde{\gamma}_q$ is a Legendrian curve of $\R^3$ and that the Fubini's densities of $\gamma_{q}=p_{h}^{-1}\circ \widetilde{\gamma}_q$ are given by
$$a= -\frac{r^{12}-132r^8-528r^4+64}{6912r^6},\quad b=\frac{r^8+56r^4+16}{384r^4},$$
where $r={\mathtt r}(q)$. Hence in view of (\ref{rq}), $0<r< 2-\sqrt{2}$. This implies $\kappa=b/\sqrt[3]{a^2}=c(q)$. To conclude the proof it suffices to note that two generic Legendrian curves with the same cr-curvature are equivalent each other.
}\end{proof}

\begin{remark}\label{S3.6.R1}{We briefly comment on the topological structure of the torus knots constructed in the Theorem above. It is known that the contact isotopy class of a Legendrian torus knot is uniquely determined by the tours knot type, by the Maslov index and the Bennequin-Thurston invariant \cite{GE}. In addition, if the torus knot is negative, of type $(m,-n)$, with $m>n$, then its Bennequin-Thurston invariant $\mathfrak{tb}$ is less or equal than $-mn$. If $\mathfrak{tb}=-mn$ then its Maslov index is
in the range $\{\pm(m-n-2nk) : k\in \Z, 0\le k\le (m-n)/n\}$. It can be shown that the Maslov index and the Thruston-Bennequin invariant of $\widetilde{\gamma}_q$, $q=m/n>1$, are $m-n$ and $-mn$ respectively. This can be verified with elementary techniques, although the proof is non trivial from a computational viewpoint. Thus, each isotopy class of a negative torus knot with maximal Maslov index and maximal Thurston-Bennequin invariant can be represented by a Legendrian curve with constant cr-curvture $c(q)$.}\end{remark}

\subsection{Critical curves with non-constant periodic curvature}\label{4S.2} Now we focus on  cr-${\it strings}$, ie generic Legendrian curves parameterized by the natural parameter, with non constant periodic cr-curvature and zero stress tensor. 
The equation of motion $\kappa'''+2\kappa\kappa'=0$ implies the existence of two constants $m_2,m_3\in \R$ such that
\begin{equation}\label{4S.2.F1}\begin{cases}&{\kappa}''+4{\kappa}^{2}-\frac{3}{8}m_2=0,\\
&({\kappa}')^2+\frac{8}{3}{\kappa}^3-\frac{3}{4}m_2 {\kappa}+9(1+\frac{m_3}{8}) =0.\end{cases}
\end{equation}
Periodic solutions of (\ref{4S.2.F1}) do exist if and only if 
$m_2^3-54(m_3+8)^2>0$. 
Let $\ell>0$ and $m\in (0,1)$ be  defined by
\begin{equation}\label{4S.2.F2}\begin{cases}m_2=\frac{8}{3}(1-m+m^2)\ell^4,\\ 
m_3=\frac{8}{27}\left(\ell^6(2m^3-3m^2-3m+2)-27\right).\end{cases}
\end{equation}
Modulo a possible translation of the independent variable, the periodic solutions of $(\ref{4S.2.F1})$ can be written as
\begin{equation}\label{4S.2.F3}
\kappa_{m,\ell}(s)=\dfrac{3}{2}\ell^2\left(\dfrac{m+1}{3}-m\ {\rm sn}^2(\ell s,m)\right),
\end{equation}
where ${\rm sn}(-,m)$ is the Jacobi's elliptic sine with parameter\footnote{The parameter is the square of the modulus.} $m\in (0,1)$. Note that $\kappa_{m,\ell}$ is an even periodic function with least period\footnote{${\rm K}$ is the complete elliptic integral of the first kind.} $\omega_{m,\ell}=2{\rm K}(m)/\ell$. If $\gamma$ is a string whit cr-curvature $\kappa_{m,\ell}$, then $(m,\ell)$ are said the {\it characters} of $\gamma$. Let ${\mathfrak m}$ be the momentum of a string with characters $(m,\ell)$. The discriminant ${\mathtt p}(m,\ell)$ and the spectrum  $\{\lambda_j\}_{j=1,2,3}$ of ${\mathfrak m}$ are given by
\begin{equation}\label{dfnL}\begin{cases}
{\mathtt p}(m,\ell)&=64\left(m^2(m-1)^2\ell^{12}+2 (m-2) (1 + m) (2 m-1) \ell^6-27\right),\\
\lambda_1(m,\ell)&=-\lambda_2(m,\ell)-\lambda_3(m,\ell),\\
\lambda_2(m,\ell)&=-\frac{4}{3}\sqrt{1+m(m-1)}\ \ell^2\sin\left(\frac{\arcsin(\widetilde{{\mathtt p}}(m,\ell))}{3}\right),\\
\lambda_3(m,\ell)&=\frac{2}{3}\sqrt{1+m(m-1)}\ \ell^2\sin\left(\frac{\arcsin(\widetilde{{\mathtt p}}(m,\ell))}{3}\right)+\\
 &\quad+\frac{2}{\sqrt{3}}\sqrt{1+m(m-1)}\ \ell^2\cos\left(\frac{\arcsin(\widetilde{{\mathtt p}}(m,\ell))}{3}\right),\\
\widetilde{{\mathtt p}}(m,\ell)&=\frac{(m-2) (1 + m) (2 m-1) \ell^6-27}{2 \left(1 + (m-1) m\right)^{3/2} \ell^6}.
\end{cases}
\end{equation}
The eigenvalues are sorted as follows:
$$\begin{cases}
\lambda_1(m,\ell)<\lambda_2(m,\ell)<\lambda_3(m,\ell),\hspace{113pt} {\rm if}\hspace{3pt} {\mathtt p}(m,\ell)>0,\\
\lambda_1(m,\ell)\in \R,\hspace{2pt}\overline{\lambda_2(m,\ell)} = \lambda_3(m,\ell)\in \C,\hspace{2pt}
{\rm Im}(\lambda_3(m,\ell))>0,\hspace{2pt} {\rm if}\hspace{2pt} {\mathtt p}(m,\ell)<0,\\
\lambda_1(m,\ell)=-2\lambda_2(m,\ell),\hspace{2pt}\lambda_2(m,\ell)=\lambda_3(m,\ell)>0,\hspace{41pt} {\rm if}\hspace{3pt}{\mathtt p}(m,\ell)=0.
\end{cases}
$$
For every $(m,\ell)\in (0,1)\times \R^{+}$ we put
\begin{equation}\label{4S.2.F13.tris}
\Theta_j(m,\ell)=\frac{6 \Pi\left(\frac{6m\ell^2}{2(1+m)\ell^2+3\lambda_j(m,\ell)},m \right)}{\pi\ell \left(2 (1 + m) \ell^2 + 3 \lambda_j(m,\ell)\right)},\quad j=1,2,3,
\end{equation}
where $\Pi(n,m)$ is the complete integral of the third kind.

\begin{thm}\label{closureconditions}{A cr-string with characters $(m,\ell)$ is closed if and only if ${\mathtt p}(m,\ell)>0$ and $\Theta_2(m,\ell),\Theta_3(m,\ell)\in {\mathbb Q}$.}\end{thm}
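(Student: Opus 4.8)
The plan is to reduce the closure of a cr-string to the closure of its momentum orbit together with a commensurability condition on angular speeds, exactly as in the constant-curvature case (Theorem \ref{S3.6.P1}) but now with an extra period coming from the non-constant curvature. First I would use the conservation law (\ref{cl}): for a string $\gamma$ with Wilczynski frame ${\bf B}$ and momentum ${\mathfrak m}\in{\mathfrak h}$, one has ${\bf B}\cdot\Lambda\cdot{\bf B}^{-1}={\mathfrak m}$. Writing ${\bf B}(s)={\rm Exp}(s\,?)\cdots$ is not literally available since $\Lambda$ is not constant, but the standard trick is to pass to the quotient: the curve $\gamma$ is closed if and only if its trajectory is invariant under a one-parameter subgroup generated by (a suitable multiple of) ${\mathfrak m}$, acting with the right speed. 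More precisely, because $\kappa_{m,\ell}$ is periodic with least period $\omega_{m,\ell}=2{\rm K}(m)/\ell$, after one curvature period the frame ${\bf B}(s+\omega_{m,\ell})$ differs from ${\bf B}(s)$ by left multiplication by a fixed monodromy element ${\bf M}\in{\rm G}$ which commutes with ${\rm Exp}(\R\,{\mathfrak m})$; the curve closes up precisely when ${\bf M}$ lies in (the closure of) the maximal torus through ${\mathfrak m}$ and its ``rotation angles'' are rational multiples of $2\pi$.

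The key steps, in order, are: (i) Observe that periodicity of $\kappa_{m,\ell}$ forces ${\mathtt p}(m,\ell)>0$: indeed periodic nonconstant solutions of (\ref{4S.2.F1}) exist iff $m_2^3-54(m_3+8)^2>0$, which by (\ref{4S.2.F2}) translates into ${\mathtt p}(m,\ell)>0$, and in that regime the spectrum $\{\lambda_j\}$ of ${\mathfrak m}$ is real and simple, so ${\rm Exp}(\R\,{\mathfrak m})$ sits inside a unique maximal torus ${\rm T}^2_\gamma\subset\widehat{\rm G}$ of the type (\ref{torus}); thus $\overline{{\rm Exp}(\R\,{\mathfrak m})}$ is either a $1$- or $2$-torus. (ii) Compute the monodromy: integrate ${\bf B}^{-1}{\bf B}'={\mathcal B}(1,\kappa_{m,\ell})$ over one period $\omega_{m,\ell}$. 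Using (\ref{cl}), ${\bf B}$ can be recovered from ${\mathfrak m}$ by a quadrature, and the monodromy ${\bf M}={\bf B}(\omega_{m,\ell}){\bf B}(0)^{-1}$ is conjugate to ${\rm R}(\mu_2,\mu_3)$ for explicit angles $\mu_j$ obtained by integrating the diagonal part of the frame equation in an eigenbasis of ${\mathfrak m}$; this is where the complete elliptic integral of the third kind enters, and the angles turn out to be $\mu_j=\pi\,\Theta_j(m,\ell)$ (up to the linear relation $\sum\lambda_j=0$, only $\Theta_2,\Theta_3$ are independent). (iii) Conclude: the trajectory of $\gamma$ is the union of the ${\rm T}^2_\gamma$-orbit swept out over one period together with all its ${\bf M}$-translates, so $|[\gamma]|$ is compact iff the subgroup generated by ${\bf M}$ in ${\rm T}^2_\gamma$ is finite, iff both rotation angles $\mu_2,\mu_3$ are rational multiples of $\pi$, iff $\Theta_2(m,\ell),\Theta_3(m,\ell)\in{\mathbb Q}$. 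Combining with (i) gives the stated ``${\mathtt p}(m,\ell)>0$ and $\Theta_2,\Theta_3\in{\mathbb Q}$''.

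For the ``only if'' direction one argues contrapositively: if ${\mathtt p}(m,\ell)\le 0$ the curvature is either non-periodic or constant (excluded for a string), so ${\mathtt p}(m,\ell)>0$ is automatic; and if some $\Theta_j\notin{\mathbb Q}$ then $\overline{\langle{\bf M}\rangle}$ contains a dense $1$-parameter subgroup of ${\rm T}^2_\gamma$, so the trajectory is dense in a surface and cannot be a closed curve. The ``if'' direction is the explicit reconstruction: given ${\mathtt p}>0$ and $\Theta_2,\Theta_3\in{\mathbb Q}$, the least common denominator of $\Theta_2,\Theta_3$ tells us after how many curvature periods the frame returns to its starting point modulo the center ${\rm Z}_{\rm G}\cong\Z_3$, and one checks the center ambiguity is absorbed by the normalized-lift ambiguity of Lemma \ref{S2.1L1}, so $\gamma$ genuinely closes.

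The main obstacle I expect is step (ii): carrying out the quadrature for the monodromy cleanly. One must diagonalize ${\mathfrak m}$, express the off-diagonal (nilpotent) part of ${\mathcal B}(1,\kappa_{m,\ell})$ in that basis, and show that the accumulated diagonal phase over one period $\omega_{m,\ell}$ is exactly $\pi\Theta_j(m,\ell)$ with $\Theta_j$ as in (\ref{4S.2.F13.tris}). This requires the integral $\int_0^{\omega_{m,\ell}} ds/\bigl(2(1+m)\ell^2+3\lambda_j-6m\ell^2{\rm sn}^2(\ell s,m)\bigr)$ to be evaluated via the standard reduction to $\Pi(n,m)$, with the denominators $2(1+m)\ell^2+3\lambda_j(m,\ell)$ and the parameter $n=6m\ell^2/(2(1+m)\ell^2+3\lambda_j)$ matching (\ref{4S.2.F13.tris}); verifying positivity of these denominators (so the integrand has no poles on the period, i.e.\ the trajectory misses the symmetry axes, cf.\ Theorem C) is a delicate point that uses the sign conventions fixed in (\ref{dfnL}) and the ordering $\lambda_1<\lambda_2<\lambda_3$.
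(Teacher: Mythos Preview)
Your overall strategy---factor the Wilczynski frame through an eigenbasis of the momentum, extract a periodic part and a diagonal ``phase'' part, and read off closure as a rationality condition on the accumulated phases---is exactly the route the paper takes. The quadrature you anticipate in step (ii) is carried out in the paper just as you describe: one builds explicit eigenvectors of $\Lambda|_s$, shows that ${\bf B}$ times a suitable normalization is a \emph{constant} eigenvector of ${\mathfrak m}$, and obtains a factorization ${\bf C}={\bf B}(s)\cdot{\bf P}(s)\cdot{\bf D}(s)$ with ${\bf P}$ periodic and ${\bf D}$ diagonal, whose entries are $e^{-6i\int_0^s dt/(4\kappa+3\lambda_j)}$. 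The reduction of $\int_0^{\omega_{m,\ell}}ds/(4\kappa_{m,\ell}+3\lambda_j)$ to the complete integral of the third kind then gives $\Theta_j(m,\ell)$, and the determinant of this factorization yields $\Theta_1+\Theta_2+\Theta_3\equiv 0\pmod{\Z}$, so only $\Theta_2,\Theta_3$ matter.

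However, there is a genuine gap in your step (i) and in your ``only if'' direction. You claim that periodicity of $\kappa_{m,\ell}$ already forces ${\mathtt p}(m,\ell)>0$, identifying the condition $m_2^3-54(m_3+8)^2>0$ with ${\mathtt p}(m,\ell)>0$. These are two \emph{different} discriminants. The first is the discriminant of the cubic governing $\kappa$ via (\ref{4S.2.F1}); it is automatically positive for every $(m,\ell)\in(0,1)\times\R^+$ (indeed, substituting (\ref{4S.2.F2}) gives a positive multiple of $m^2(1-m)^2\ell^{12}$). The second, ${\mathtt p}(m,\ell)$ in (\ref{dfnL}), is the discriminant of the \emph{momentum} ${\mathfrak m}$, and it changes sign on $(0,1)\times\R^+$: it is positive only on the domain ${\mathcal D}=\{\ell>{\mathfrak l}(m)\}$. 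So there genuinely are strings with periodic nonconstant curvature and ${\mathtt p}(m,\ell)\le 0$, and you must rule out closure for those.

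The paper handles this by a direct growth estimate on $\|{\bf B}\|_{\rm HS}$. If ${\mathtt p}(m,\ell)<0$, the momentum has a complex-conjugate pair of eigenvalues $-\lambda\pm i\tau$ with $\tau>0$; the corresponding diagonal entries of ${\bf D}$ contain real exponentials $e^{\pm 18\tau\int_0^s dt/((4\kappa-3\lambda)^2+9\tau^2)}$, and since the integrand is strictly positive and periodic, $\|{\bf B}\cdot{\bf P}\|_{\rm HS}$ is unbounded, contradicting periodicity of ${\bf B}$. If ${\mathtt p}(m,\ell)=0$, the momentum has a double eigenvalue and one must build a rank-two generalized eigenvector; the factorization then acquires an additional unipotent factor ${\bf T}(s)={\rm Id}+\bigl(162\lambda\int_0^s dt/(4\kappa+3\lambda)^2\bigr){\bf E}^3_2$, again with strictly positive periodic integrand, producing unbounded polynomial growth. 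Neither case is ``automatic'', and your dense-orbit argument does not apply here because ${\rm Exp}(\R\,{\mathfrak m})$ is not contained in a compact torus when the spectrum is non-real or non-simple. Once you supply these two estimates, the rest of your outline matches the paper.
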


\begin{proof}{First we analyze eigenvectors and generalized eigenvectors of the momentum $\mathfrak{m}$ of a string with characters $(m,\ell)$. Denote by $\Lambda$ the map defined as in (\ref{Lambda}), with $\kappa=\kappa_{m,\ell}$. Let $\lambda$ be an eigenvalue of ${\mathfrak m}$. Then, $\lambda$ is an eigenvalue of $\Lambda|_s$, for every $s\in \R$. From (\ref{Lambda}) it follows that
\begin{equation}\label{4S.2.F6}{\rm U}_{\lambda}(s) =\hspace{1pt} ^t\left( \frac{i}{2}\big(4\kappa|_s+3\lambda\big)\big(\frac{2}{3}\kappa|_s-\lambda\big),6\big(1+\frac{i}{3}\kappa'|_s\big),
4\kappa|_s+3\lambda\right),
\end{equation}
generates the $\lambda$-eigenspace ${\mathbb L}_{\lambda}|_s$ of $\Lambda|_s$. Hence, ${\mathbb L}_{\lambda}|_s$ is $1$-dimensional. If ${\bf B}$ is a  Wilczynski frame and if we put
\begin{equation}\label{4S.2.F7} {\bf U}_{\lambda}={\bf B}\cdot {\rm U}_{\lambda},\end{equation}
then, ${\bf U}_{\lambda}(s)$ belongs to the $1$-dimensional $\lambda$-eigenspace ${\mathbb M}_{\lambda}$ of 
${\mathfrak m}$, for every $s\in \R$. Hence, ${\bf U}_{\lambda}'=\varrho {\bf U}_{\lambda}$, where $\varrho$ is a complex-valued function. From this we deduce that
\begin{equation}\label{4S.2.F7Bis}{\rm U}_{\lambda}' +({\mathcal B}-\varrho {\rm Id}_{3\times 3}){\rm U}_{\lambda} = 0,\end{equation}
where ${\mathcal B}$ is as in (\ref{S2.2.F4}), with $a=1$ and $b=\kappa$.
The third component of the left hand side of (\ref{4S.2.F7Bis}) is equal to $\frac{1}{3}(2\kappa'-\varrho(4\kappa+3\lambda)+6i)$. Since $\kappa$ is real-valued, the equation $2\kappa'-\varrho(4\kappa+3\lambda)+6i=0$ implies
\begin{equation}\label{4S.2.F8} 4\kappa(s)+3\lambda\neq 0,\hspace{3pt} \forall s\in \R,\hspace{3pt} {\rm and}\hspace{3pt}  \varrho =\frac{2\kappa'+6i}{4\kappa+3\lambda}.\end{equation}
Let $\sqrt{4\kappa+3\lambda}$ be a continuous determination of the square root of  $4\kappa+3\lambda$ and ${\rm V}_{\lambda},{\rm W}_{\lambda}:\R\to \C^{2,1}$ be defined by
\begin{equation}\label{4S.2.F9}
{\rm V}_{\lambda} = \dfrac{1}{\sqrt{4\kappa+3\lambda}}{\rm U}_{\lambda},\quad 
{\rm W}_{\lambda}=e^{-6i \int \frac{ds}{4\kappa+3\lambda}}{\rm V}_{\lambda}.
\end{equation}
Then
\begin{equation}\label{4S.2.F10}\mathfrak{W}_{\lambda}={\bf B}\cdot {\rm W}_{\lambda}=
e^{-6i \int_0^s \frac{dt}{4\kappa+3\lambda}} {\bf B}\cdot {\rm V}_{\lambda}
\end{equation}
is a {\it constant} eigenvector of ${\mathfrak m}$. If ${\mathtt p}(m,\ell)=0$, the characteristic polynomial of ${\mathfrak m}$ has a double positive real root $\lambda$. We put
\begin{equation}\label{4S.2.F11}\begin{cases}
\widetilde{{\rm V}}_{\lambda}=\sqrt{4\kappa+
3\lambda}\hspace{5 pt}^t\left(-\frac{2(\kappa+3\lambda)^2}{3(4\kappa+3\lambda)},
\frac{(2\kappa-3\lambda)(\kappa+3\lambda)}{3(\kappa'+3i)},\frac{i(2\kappa-3\lambda)}{4\kappa+3\lambda}\right),\\
\widetilde{{\rm W}}_{\lambda}=e^{-6i \int_0^s \frac{dt}{4\kappa+3\lambda}}\hspace{5pt}^t\left(\widetilde{{\rm V}}_{\lambda}+
162\lambda\int_0^s\frac{dt}{(4\kappa+3\lambda)^2}{\rm V}_{\lambda}\right),\end{cases}
\end{equation}
where ${\rm V}_{\lambda}$ is as in (\ref{4S.2.F9}). Then,  $\widetilde{{\rm W}}_{\lambda}(s)$ is a rank two generalized eigenvector\footnote{that is $(\Lambda(s)-\lambda{\rm Id})\widetilde{{\rm W}}_{\lambda}(s)\neq 0$ and $(\Lambda(s)-\lambda{\rm Id})^2\widetilde{{\rm W}}_{\lambda}(s)= 0$.} of $\Lambda(s)$, for each $s\in \R$. Since 
$\widetilde{{\rm W}}_{\lambda}$ satisfies $\widetilde{{\rm W}}_{\lambda}'+\mathcal{B}\cdot \widetilde{{\rm W}}_{\lambda}=0$, we infer that
\begin{equation}\label{4S.2.F12}\widetilde{\mathfrak{W}}_{\lambda}={\bf B}\cdot \widetilde{{\rm W}}_{\lambda}=
e^{-6i \int_0^s \frac{dt}{4\kappa+3\lambda}} {\bf B}\cdot \left(\widetilde{{\rm V}}_{\lambda}+162\lambda\int_0^s\frac{dt}{(4\kappa+3\lambda)^2}{\rm V}_{\lambda}\right)
\end{equation}
is a {\it constant} rank-two generalized eigenvector of ${\mathfrak m}$. From this we can deduce the following conclusion: if $\mathtt{p}(m,\ell) \neq 0$ then there exist ${\bf C}\in {\rm GL}(3,\C)$, a periodic map $\bf{P}:\R\to \rm{GL}(3,\C)$ with least period $\omega_{m,\ell}$ such that
\begin{equation}\label{4S.2.F13}\begin{split}&{\bf C}={\bf B}(s)\cdot {\bf P}(s)\cdot {\bf D}(s),\\ 
&{\bf D}(s)=e^{-6i \int_0^s \frac{dt}{4\kappa+3\lambda_1(m,\ell)}}{\bf E}^1_1+e^{-6i \int_0^s \frac{dt}{4\kappa+3\lambda_2(m,\ell)}}{\bf E}^2_2+e^{-6i \int_0^s \frac{dt}{4\kappa+3\lambda_3(m,\ell)}}{\bf E}^3_3.\end{split}\end{equation}
Similarly, if $\mathtt{p}(m,\ell) = 0$, there exist ${\bf C}\in {\rm GL}(3,\C)$ and a periodic map $\bf{P}:\R\to \rm{GL}(3,\C)$ with least period $\omega_{m,\ell}$ such that
\begin{equation}\label{4S.2.F13.bis}\begin{split}
& {\bf C}={\bf B}(s)\cdot {\bf P}(s)\cdot {\bf D}(s)\cdot {\bf T}(s),\\
&{\bf T}(s)={\rm Id}_{3\times 3}+\left(\int_0^s\frac{162\lambda_2(m,\ell)}{(4\kappa+3\lambda_2(m,\ell))^2}dt\right){\bf E}^3_2.\end{split}\end{equation}
We now prove that, if $\mathtt{p}(m,\ell) \le 0$, then there are no closed stings with characters $(m,\ell)$. By contradiction, suppose that $\gamma$ is periodic. Then, ${\bf B}$ is a periodic map too. Hence, its Hilbert-Schmidt norm $\|{\bf B}\|_{\rm {HS}}$ is a bounded function. Assume $\mathtt{p}(m,\ell) < 0$. Then,  
$$\lambda_1(m,\ell)=2\lambda_{m,\ell},\quad \lambda_2(m,\ell)=-\lambda_{m,\ell}-i\tau_{m,\ell},\quad 
\lambda_3(m,\ell)=-\lambda_{m,\ell}+i\tau_{m,\ell},
$$
where $\lambda_{m,\ell},\tau_{m,\ell}\in \R$ and $\tau_{m,\ell}>0$. Observing that
$$\int_0^s \frac{dt}{4\kappa+3\lambda_2(m,\ell)}=\int_0^s \frac{(4\kappa-3\lambda_{m,\ell})dt}{(4\kappa-3\lambda_{m,\ell})^2+9\tau_{m,\ell}^2}
+3i\tau_{m,\ell}\int_0^s \frac{dt}{(4\kappa-3\lambda_{m,\ell})^2+9\tau_{m,\ell}^2}
$$
and using (\ref{4S.2.F13}), we get\footnote{in this context, $\|\cdot \|$ and $(\cdot, \cdot)$ are the standard norm and hermitian inner product of $\C^3$.}
\begin{multline}\|{\bf B}\|_{\rm {HS}}^2 \|{\bf P}\|_{\rm {HS}}^2\ge \|{\bf B}\cdot {\bf P}\|_{\rm {HS}}^2=\\=
\|{\rm C}_1\|^2+e^{36\tau_{m,\ell}\int_0^s \frac{dt}{(4\kappa-3\lambda_{m,\ell})^2+9\tau_{m,\ell}^2}} \|{\rm C}_2\|^2+e^{-36\tau_{m,\ell}\int_0^s \frac{dt}{(4\kappa-3\lambda_{m,\ell})^2+9\tau_{m,\ell}^2}} \|{\rm C}_3\|^2,\nonumber
\end{multline}
where ${\rm C}_j$ are the column vectors of ${\bf C}$. This implies that $\|{\bf B}\|_{\rm {HS}}$ is unbounded. Similarly, if $\gamma$ is periodic and $\mathtt{p}(m,\ell) = 0$, then $\lambda_1(m,\ell)=-2\lambda_{m,\ell}$, 
$\lambda_2(m,\ell)=\lambda_3(m,\ell)=\lambda_{m,\ell}$ where $\lambda_{m,\ell}>0$. From (\ref{4S.2.F13.bis}) we obtain
\begin{multline}
\|{\bf B}\|_{\rm {HS}}^2 \|{\bf P}\|_{\rm {HS}}^2\ge \|{\bf B}\cdot {\bf P}\|_{\rm {HS}}^2=\\=
\|{\bf C}\|_{\rm {HS}}^2
+ \int_0^s\frac{162\lambda_{m,\ell}\ dt}{(4\kappa+3\lambda_{m,\ell} )^2}\left( \int_0^s\frac{162\lambda_{m,\ell}\ dt}{(4\kappa+3\lambda_{m,\ell} )^2}\|{\rm C}_2\|^2-2{\rm Re}(({\rm C}_2,{\rm C}_3)) \right).\nonumber
\end{multline}
Hence, $\|{\bf B}\|_{\rm {HS}}$ is unbounded. So even in this case there are no closed strings.

\noindent Suppose $\mathtt{p}(m,\ell) > 0$. Keeping in mind (\ref{4S.2.F13}) and taking into account that ${\bf P}$ is periodic with least period $\omega_{m,\ell}$, then ${\bf B}$ is periodic 
if and only if
\begin{equation}\label{periodic}
\frac{3}{\pi}\int_0^{\omega_{m,\ell}} \frac{ds}{4\kappa_{m,\ell}(s) + 3\lambda_j (m,\ell)}\in {\mathbb Q},
\end{equation}
$j=1,2,3$. From (\ref{4S.2.F3}) it follows that the above integral is equal to $\Theta_j(m,\ell)$. This proves that a string with $\mathtt{p}(m,\ell) > 0$ is closed if and only if $\Theta_j(m,\ell)\in {\mathbb Q}$, $j=1,2,3$. From (\ref{4S.2.F13}) we have
\begin{equation}\label{4S.2.F13.tetra}{\rm det}({\bf C}^{-1}\cdot {\bf P})=e^{6i\sum_{j=1}^{3}\int_0^s \frac{dt}{4\kappa+3\lambda_j(m,\ell)}}.\end{equation}
Since ${\rm det}({\bf P})$ is a periodic function with period $\omega_{m,\ell}$, then (\ref{4S.2.F13.tetra}) 
implies that $\Theta_1(m,\ell)+\Theta_2(m,\ell)+\Theta_3(m,\ell)\cong_{\Z} 0$. Therefore, if $\Theta_2(m,\ell),\Theta_3(m,\ell)\in {\mathbb Q}$, then $\Theta_1(m,\ell)\in {\mathbb Q}$. This concludes the proof of the Theorem.
}\end{proof}

\section{The period map}\label{5}
\noindent Denote by ${\mathcal D}$ (see Figure \ref{FIG3}) the planar domain $\{(m,\ell)\in (0,1)\times \R^+/ \ell>{\mathfrak l}(m)\}$, where 
\begin{equation}\label{5S.1.F1Bis}\mathfrak{l}(m)=\sqrt[6]{\frac{27}{(m -2) (1 + m) (2 m-1) + 2 \left(1 + (m -1) m\right)^{3/2}}}.
\end{equation}

\begin{defn}\label{periodM}{Let $\Theta_2$ and $\Theta_3$ be the real-analytic functions defined in (\ref{4S.2.F13.tris}). We call $\Theta : (m,\ell)\in {\mathcal D}\to (\Theta_2(m,\ell),\Theta_3(m,\ell))\in \R^2$  the {\it period map} of the total strain functional. By construction, $\Theta$ is real-analytic and non-constant.}\end{defn}

\begin{figure}[h]
\begin{center}
\includegraphics[height=6.2cm,width=6.2cm]{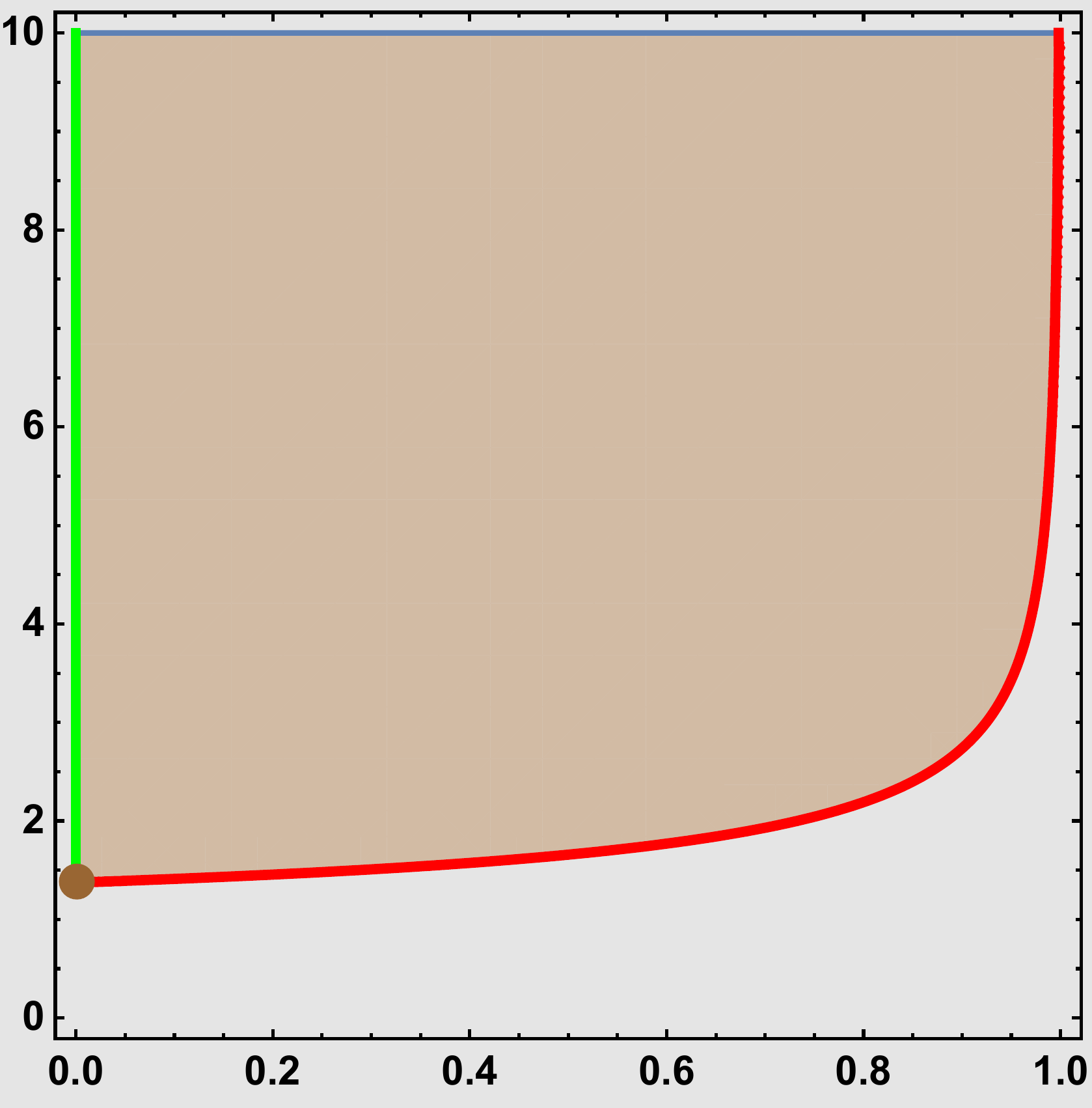}
\includegraphics[height=6.2cm,width=6.2cm]{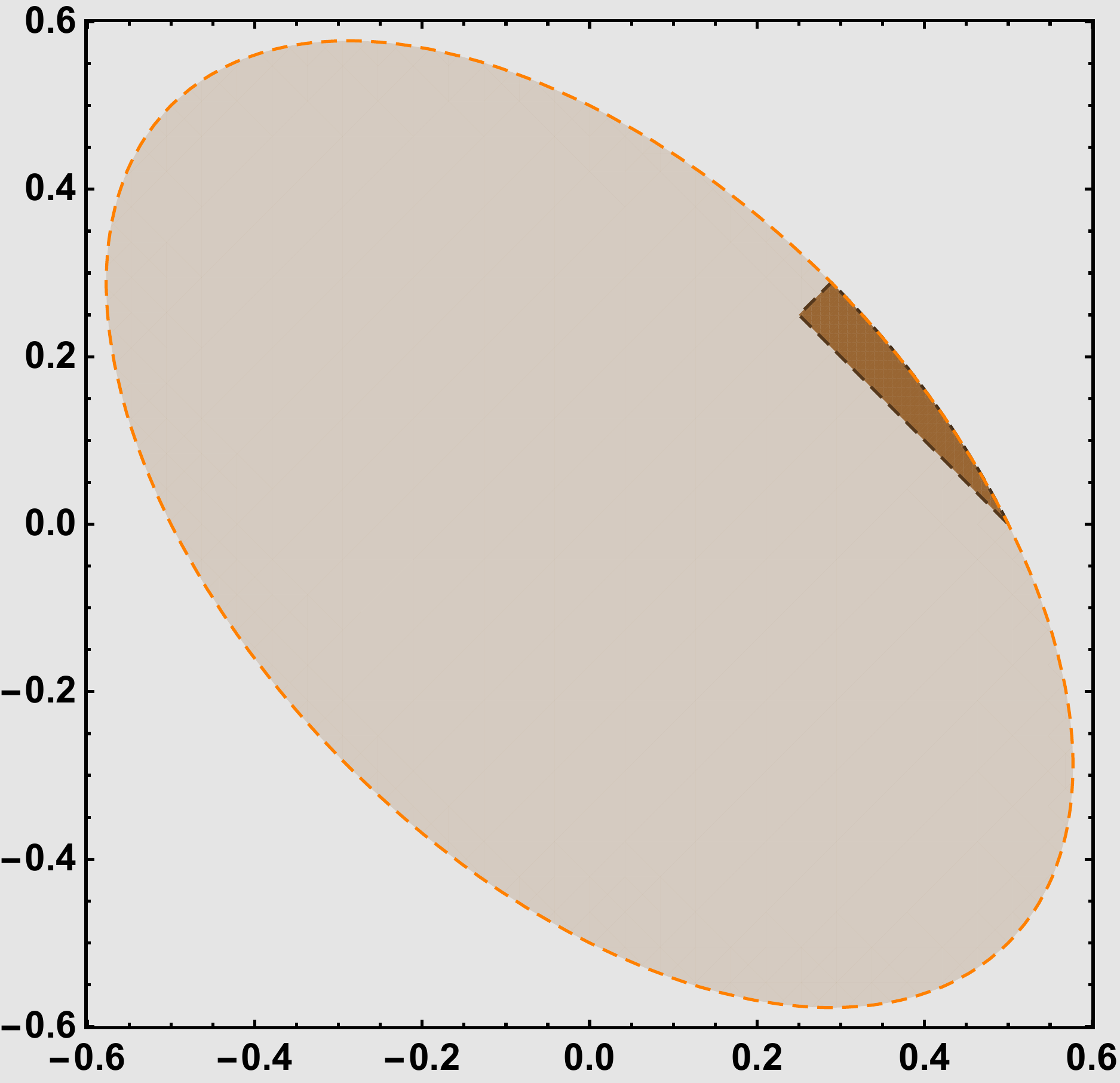}
\caption{\small{The domains ${\mathcal D}$ (left) and ${\mathcal M}$ (rigth), the dark portion of the ellipse.}}\label{FIG3}
\end{center}
\end{figure}

\begin{defn}\label{DMN}{The {\it monodromic domain} is the planar domain (see Figure \ref{FIG3}) defined by
$${\mathcal M}=\{(x,y)\in \R^2 : x^2+xy+y^2<1/4,\hspace{2pt} x-y>0,\hspace{2pt} x+y>1/2 \}.$$}\end{defn}

\begin{remark}\label{arc}{The boundary of the monodromic domain (see Figure \ref{FIG4}) consists of three vertices
$P_1(1/4,1/4)$, $P_2(1/2\sqrt{3},1/2\sqrt{3})$ and $P_3(1/2,0)$, the segments $\sigma_{1,2}=[{\rm P}_1,{\rm P}_2]$ and $\sigma_{1,3}=[{\rm P}_1,{\rm P}_3]$ and the arc $\sigma_{2,3}$ of the ellipse $x^2+xy+y^2=1/4$ connecting ${\rm P}_2$ and ${\rm P}_3$ parameterized by $t\in [0,1]\to (x(t),y(t))$, where
\begin{equation}\label{DMN.F1}\begin{cases}
x(t)&=\frac{\sqrt{1 - t}}{\sqrt{3}\left(1 + 2\sin\left(\frac{\arcsin(1 - 2 t)}{3}\right)\right)},\\
y(t)&=\frac{\sqrt{1 - t}}{3\cos\left(\frac{\arcsin(1 - 2 t)}{3}\right)+
\sqrt{3}\left(1 -\sin\left(\frac{\arcsin(1 - 2 t)}{3}\right)\right)}.\end{cases} 
\end{equation}}\end{remark}

\noindent The content of Theorem \ref{closureconditions} can be rephrased as follows: 

\begin{cor}{There is a one-to-one correspondence between the equivalence classes of closed strings and the set 
${\mathcal D}_{*}$ of all $(m,\ell)\in {\mathcal D}$ such that $\Theta(m,\ell)\in {\mathbb Q}\times {\mathbb Q}$.}\end{cor}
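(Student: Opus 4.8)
The plan is to recognize the Corollary as a direct repackaging of Theorem~\ref{closureconditions}: once one knows (i) that the positivity locus of the discriminant ${\mathtt p}$ coincides with the domain ${\mathcal D}$, and (ii) that cr-equivalence classes of strings are faithfully indexed by their characters $(m,\ell)$, the statement follows by unwinding definitions. For (i), put $A=(m-2)(1+m)(2m-1)=2m^{3}-3m^{2}-3m+2$ and $B=m^{2}(m-1)^{2}$, so that by (\ref{dfnL}) we have ${\mathtt p}(m,\ell)=64\,g(\ell^{6})$ with $g(u)=Bu^{2}+2Au-27$. On $(0,1)$ one has $B>0$ and $g(0)=-27<0$, hence $g$ has exactly one positive root $u_{+}$, and for $u>0$, $g(u)>0$ iff $u>u_{+}$. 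The single nontrivial computation is the polynomial identity $A^{2}+27B=4(m^{2}-m+1)^{3}$; granting it, $|A|<2(m^{2}-m+1)^{3/2}$ (because $B>0$), so $A+2(m^{2}-m+1)^{3/2}>0$, and the quadratic formula followed by rationalization yields
\[
u_{+}=\frac{2(m^{2}-m+1)^{3/2}-A}{B}=\frac{4(m^{2}-m+1)^{3}-A^{2}}{B\left(A+2(m^{2}-m+1)^{3/2}\right)}=\frac{27}{A+2(m^{2}-m+1)^{3/2}}={\mathfrak l}(m)^{6},
\]
the last equality being the definition (\ref{5S.1.F1Bis}) of ${\mathfrak l}$. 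Thus, for $(m,\ell)\in(0,1)\times\R^{+}$, ${\mathtt p}(m,\ell)>0\iff\ell>{\mathfrak l}(m)\iff(m,\ell)\in{\mathcal D}$, and therefore the hypothesis of Theorem~\ref{closureconditions} reads exactly ``$(m,\ell)\in{\mathcal D}$ and $\Theta(m,\ell)\in{\mathbb Q}\times{\mathbb Q}$'', i.e.\ $(m,\ell)\in{\mathcal D}_{*}$, the period map $\Theta$ being defined precisely on ${\mathcal D}$.

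For (ii), I would first check existence: integrating the Wilczynski structure equation ${\bf B}^{-1}{\bf B}'={\mathcal B}(1,\kappa_{m,\ell})$ of (\ref{S2.2.F4}) for each $(m,\ell)\in(0,1)\times\R^{+}$ produces a generic Legendrian curve $\gamma_{m,\ell}$ in its natural parameter with non-constant periodic cr-curvature $\kappa_{m,\ell}$, and since $\kappa_{m,\ell}$ solves the Euler-Lagrange equation it has vanishing stress tensor by Definition~\ref{S2.4R2}, hence is a string. Conversely, the cr-curvature of any string is a non-constant periodic solution of the Euler-Lagrange equation, so by (\ref{4S.2.F3}) it equals $\kappa_{m,\ell}(\,\cdot\,+c)$ for unique $(m,\ell)$ and some $c\in\R$; after the natural reparametrization $s\mapsto s-c$ its curvature is exactly $\kappa_{m,\ell}$, and by the rigidity of generic curves used in Theorem~\ref{S3.6.P1} (two generic Legendrian curves with the same cr-curvature are cr-congruent) the string is cr-congruent to $\gamma_{m,\ell}$. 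Hence $(m,\ell)\mapsto[\gamma_{m,\ell}]$ is a well-defined surjection from $(0,1)\times\R^{+}$ onto cr-equivalence classes of strings. For injectivity, I would exploit that the natural parameter is canonical up to an additive constant (the canonical orientation fixing the sign of the cubic density), so any cr-equivalence $[\gamma_{m,\ell}]=[\gamma_{m',\ell'}]$ is realized by $s\mapsto s+c$ and, through Proposition~\ref{S2.1L2}, forces $\kappa_{m',\ell'}=\kappa_{m,\ell}(\,\cdot\,+c)$; since the extremal values $M=\tfrac12\ell^{2}(m+1)$ (attained where ${\rm sn}(\ell s,m)=0$) and $\mu=\tfrac12\ell^{2}(1-2m)$ (where ${\rm sn}^{2}(\ell s,m)=1$) of $\kappa_{m,\ell}$ are translation invariant and satisfy $\ell^{2}=\tfrac23(2M+\mu)$, $m=(M-\mu)/(2M+\mu)$, the characters are recovered, whence $(m,\ell)=(m',\ell')$.

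Combining (i) and (ii), the map $(m,\ell)\mapsto[\gamma_{m,\ell}]$ restricts to a bijection between ${\mathcal D}_{*}$ and the cr-equivalence classes of closed strings, since by Theorem~\ref{closureconditions} together with (i) the string $\gamma_{m,\ell}$ is closed precisely when $(m,\ell)\in{\mathcal D}$ and $\Theta(m,\ell)\in{\mathbb Q}\times{\mathbb Q}$. I do not expect a genuine obstacle: step (i) is a one-line polynomial identity and step (ii) merely reassembles statements already established; the only point demanding care is making the equivalence ``cr-equivalence of strings $\Longleftrightarrow$ equality of characters'' watertight, which is exactly where the uniqueness of the natural parameter and the recoverability of $(m,\ell)$ from $\kappa_{m,\ell}$ enter.
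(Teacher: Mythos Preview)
Your proposal is correct and follows the same route as the paper, which in fact offers no proof at all for this Corollary beyond the sentence ``The content of Theorem~\ref{closureconditions} can be rephrased as follows''. You have supplied the two details the paper leaves implicit: the identification of the positivity locus of~${\mathtt p}$ with~${\mathcal D}$ (your computation via the identity $A^{2}+27B=4(m^{2}-m+1)^{3}$ is clean and matches the definition (\ref{5S.1.F1Bis}) of~${\mathfrak l}$), and the fact that cr-equivalence classes of strings are faithfully labelled by their characters $(m,\ell)$, which you justify by recovering $(m,\ell)$ from the extrema of $\kappa_{m,\ell}$ together with the rigidity principle invoked at the end of the proof of Theorem~\ref{S3.6.P1}.
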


\noindent Thus, Theorem {\rm B} can be reformulated as follows.

\begin{thm}{The period map is a diffeomorphism of ${\mathcal D}$ onto ${\mathcal M}$.}\end{thm}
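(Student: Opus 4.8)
The plan is to prove that $\Theta$ is a proper local diffeomorphism onto $\mathcal{M}$, by pairing an interior non-degeneracy statement with a careful analysis of the behaviour of $\Theta$ along $\partial\D$, and then to conclude via Brouwer degree. I first note the relevant topology: $\mathcal{M}$ is the intersection of the open region bounded by the ellipse $x^{2}+xy+y^{2}=1/4$ with the half-planes $x-y>0$ and $x+y>1/2$, hence convex and bounded, in particular connected and simply connected; $\D$ is connected, being the region above the graph $\ell=\mathfrak{l}(m)$ over $(0,1)$. The two facts I would establish are: (i) $\det(D\Theta)$ is nowhere zero on $\D$, so $\Theta$ is a local diffeomorphism with $\det(D\Theta)$ of constant sign; and (ii) after the change of coordinate $\ell\mapsto\arctan\ell$ — which makes $\D$ bounded, with closure a topological disk whose boundary circle is the union of the arc $\{\mathtt{p}(m,\ell)=0\}$ (the lower graph $\ell=\mathfrak{l}(m)$), the limit arc $\{m\to 0^{+}\}$ and the limit arc $\{\ell\to+\infty\}$, the side $m\to 1^{-}$ collapsing to a single corner because $\mathfrak{l}(m)\to+\infty$ there — the map $\Theta$ extends continuously to $\overline{\Theta}\colon\overline{\D}\to\overline{\mathcal{M}}$ which carries $\partial\D$ homeomorphically onto the Jordan curve $\partial\mathcal{M}$, sending the three arcs (in the order above) onto $\sigma_{1,2}$, $\sigma_{1,3}$, $\sigma_{2,3}$ and the three corners onto $P_{1},P_{2},P_{3}$.

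For (ii) I would work with $\Theta_{j}(m,\ell)=\tfrac{3}{\pi}\int_{0}^{\omega_{m,\ell}}\bigl(4\kappa_{m,\ell}(s)+3\lambda_{j}(m,\ell)\bigr)^{-1}\,ds$ as in (\ref{periodic}) and with its closed form (\ref{4S.2.F13.tris}). On the locus $\mathtt{p}(m,\ell)=0$ one has $\lambda_{2}=\lambda_{3}$, hence $\Theta_{2}=\Theta_{3}$, so the image lies on the diagonal $x=y$, and computing the two endpoints identifies it with the segment $\sigma_{1,2}=[P_{1},P_{2}]$. As $m\to 0^{+}$ the curvature $\kappa_{m,\ell}$ tends to the constant $\ell^{2}/2$ — precisely the constant-curvature regime of Theorem \ref{S3.6.P1} — and with $\Pi(n,0)=\tfrac{\pi}{2\sqrt{1-n}}$ the limit of $(\Theta_{2},\Theta_{3})$ is elementary and sweeps out $\sigma_{1,3}=[P_{1},P_{3}]$, the vertex $P_{3}$ being reached as $\ell\to+\infty$. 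As $\ell\to+\infty$ the quantity $\widetilde{\mathtt{p}}(m,\ell)$ tends to a finite limit depending only on $m$, the complete integrals of the third kind degenerate, and the limiting values of $(\Theta_{2},\Theta_{3})$ are matched against the explicit parametrization (\ref{DMN.F1}) of the elliptic arc $\sigma_{2,3}$; the only delicate point is the $0/0$ indeterminacy of the characteristics $n_{j}$ at the corner $m=0,\ \ell=+\infty$, resolved by a first-order expansion in $m$. Injectivity of $\overline{\Theta}$ along $\partial\D$ then follows from the monotonicity of these one-parameter parametrizations.

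Step (i) is where I expect the real difficulty. Differentiating (\ref{4S.2.F13.tris}) in $m$ and $\ell$ via the classical formulas for $\partial_{n}\Pi(n,m)$ and $\partial_{m}\Pi(n,m)$ (cf. \cite{By}), one reduces $\det(D\Theta)=\partial_{m}\Theta_{2}\,\partial_{\ell}\Theta_{3}-\partial_{\ell}\Theta_{2}\,\partial_{m}\Theta_{3}$ to an expression in complete elliptic integrals that must be shown not to vanish on $\D$. My preferred approach is to pass to the coordinates $(\mathtt{p},\ell)$ — equivalently to two of the eigenvalues $\lambda_{j}$ of the momentum, which coordinatize its characteristic polynomial — and to combine the strict monotonicity of each $\Theta_{j}$ in $\lambda_{j}$ (a consequence of $4\kappa_{m,\ell}+3\lambda_{j}>0$, cf. (\ref{4S.2.F8})) with the relation $\Theta_{1}+\Theta_{2}+\Theta_{3}\in\Z$, which is constant on the connected set $\D$ by (\ref{4S.2.F13.tetra}); this slaves one $\Theta_{j}$ to the other two and recasts the non-vanishing of the Jacobian as a Legendre-type inequality relating $\Pi(n_{1},m),\Pi(n_{2},m),\Pi(n_{3},m)$. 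If a closed-form proof of that inequality is out of reach, a fallback is to establish non-vanishing of $\det(D\Theta)$ on a dense open subset of $\D$ and to exclude a sign change by continuity, or to reduce the statement to a one-variable monotonicity along the leaves $\mathtt{p}=\mathrm{const}$.

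Granting (i) and (ii), the argument closes by degree theory. Orient $\R^{2}$ so that $\det(D\Theta)>0$ on $\D$ (the opposite sign is handled by composing with a reflection of $\R^{2}$). For $p\in\mathcal{M}$ the Brouwer degree $\mathrm{deg}(\overline{\Theta},\D,p)$ is defined, since $p\notin\overline{\Theta}(\partial\D)=\partial\mathcal{M}$, and it equals the winding number of the Jordan curve $\overline{\Theta}|_{\partial\D}$ about $p$, which is $+1$; hence $\Theta^{-1}(p)$ is non-empty, contained in $\D$, and finite (as $\Theta$ is a local diffeomorphism), with each of its points contributing $+1$ to the degree, so it is a single point. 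This gives $\mathcal{M}\subseteq\Theta(\D)$ together with injectivity of $\Theta$ over $\mathcal{M}$. Conversely $\Theta(\D)$ meets neither the exterior of $\overline{\mathcal{M}}$ — a point there lies in the unbounded component of $\R^{2}\setminus\partial\mathcal{M}$, on which the degree vanishes, whereas any preimage under the orientation-preserving local diffeomorphism $\Theta$ would force the degree to be at least $1$ — nor $\partial\mathcal{M}$, since by openness of $\Theta$ a point of $\partial\mathcal{M}$ in $\Theta(\D)$ would pull a nearby exterior point into $\Theta(\D)$. Hence $\Theta(\D)=\mathcal{M}$ and $\Theta\colon\D\to\mathcal{M}$ is a bijective local diffeomorphism, i.e. a diffeomorphism.
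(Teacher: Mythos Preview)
Your overall architecture---local diffeomorphism plus boundary control plus degree---is a legitimate alternative to the paper's route (which proves injectivity by showing that the level curves of $\Theta_2$ and $\Theta_3$ are graphs meeting at most once, and surjectivity by a direct properness argument in the rescaled coordinates $(m,h)$ with $\ell=(1-h)^{-1/6}\mathfrak{l}(m)$). But two pieces of your execution break.

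First, the boundary correspondence is wrong, and with it your choice of compactification. One has $\lim_{\ell\to\infty}\Theta(m,\ell)=(1/2,0)=P_3$ for \emph{every} fixed $m\in(0,1)$ (Steps III--IV of the injectivity proof in the paper), so in your $\arctan\ell$ compactification the entire top edge $\{\ell=+\infty\}$ collapses to the single vertex $P_3$, not to the elliptic arc $\sigma_{2,3}$. Conversely the side $m\to 1^{-}$ does \emph{not} collapse in the image: along paths with $\ell=(1-h)^{-1/6}\mathfrak{l}(m)$, $h\in(0,1)$ fixed, one gets
\[
\Theta\ \longrightarrow\ \Bigl(\tfrac38-\tfrac{1}{4\pi}\arcsin(1-2h),\ \tfrac18+\tfrac{1}{4\pi}\arcsin(1-2h)\Bigr),
\]
which sweeps out all of $\sigma_{1,3}$. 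Hence $\Theta$ has no continuous extension to the closure of $\D$ in the $(m,\arctan\ell)$ coordinates: the corner $(m,\ell)=(1,\infty)$ is a blow-up point, and your map $\overline\Theta$ does not exist as stated. Finally, the edge $m\to 0^{+}$ lands on the elliptic arc $\sigma_{2,3}$, not on $\sigma_{1,3}$; your ``constant-curvature regime'' heuristic points to the wrong side. All of this is repairable by working in the paper's $(m,h)$ square, whose four sides go to $\sigma_{1,2}$, $\sigma_{2,3}$, $\sigma_{1,3}$ and the point $P_3$ respectively; with that correction (and allowing one boundary arc to collapse) your degree argument does go through.

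Second, step (i) is only a wish list. Monotonicity of $\Theta_j$ in $\lambda_j$ together with the integrality of $\Theta_1+\Theta_2+\Theta_3$ does not by itself control the $2\times 2$ Jacobian in $(m,\ell)$, since the change of variables from $(m,\ell)$ to any pair of eigenvalues is part of what must be controlled. The paper closes this gap by direct computation: using $\partial_n\Pi$, $\partial_m\Pi$ and the first-order PDE satisfied by $\lambda_2,\lambda_3$ one obtains
\[
\det J(\Theta)\big|_{(m,\ell)}=\frac{108}{\varrho(m,\ell)}\Bigl(3E(m)^2+(1-m)K(m)^2+2(m-2)E(m)K(m)\Bigr),
\]
and checks that both $\varrho$ and the Legendre-type factor are strictly negative on $\D$. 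You need either this computation or a genuine substitute; the fallbacks you list (dense non-vanishing plus ``no sign change'', or monotonicity along $\mathtt p=\mathrm{const}$ leaves) are circular without it.
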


\noindent Since the proof is rather technical, we split the discussion into three parts (Propositions \ref{PI}, \ref{PII} and \ref{PIII}). In the first one we prove that $\Theta$ is a local diffeomorphism. In the second part we prove the injectivity of $\Theta$ and in the third part we show that $\Theta({\mathcal D})= {\mathcal M}$.

\begin{prop}\label{PI}{The determinant of the Jacobian matrix $J(\Theta)$ of the period map is strictly positive.}\end{prop}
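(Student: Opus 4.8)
The plan is to show that the $2\times 2$ Jacobian determinant $\det J(\Theta)=\partial_m\Theta_2\,\partial_\ell\Theta_3-\partial_\ell\Theta_2\,\partial_m\Theta_3$ never vanishes (and is in fact positive) on $\mathcal D$, by exploiting the special structure of the functions $\Theta_j$ as integrals of the resolvent of the momentum. The key observation is that each $\Theta_j(m,\ell)$ is, up to the normalization in \eqref{4S.2.F13.tris}, the period integral
$$
\Theta_j(m,\ell)=\frac{3}{\pi}\int_0^{\omega_{m,\ell}}\frac{ds}{4\kappa_{m,\ell}(s)+3\lambda_j(m,\ell)},
$$
which we recognize as a complete elliptic integral of the third kind with characteristic depending on $\lambda_j$. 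I would first change variables $s\mapsto u=\ell s$ and use \eqref{4S.2.F3} to write $4\kappa_{m,\ell}+3\lambda_j$ as a linear-in-$\mathrm{sn}^2$ expression; this makes the dependence on $(m,\ell)$ fully explicit through the three quantities $\mu_j:=2(1+m)\ell^2+3\lambda_j(m,\ell)$ appearing in the denominator and through the modulus $m$. Since $\lambda_1+\lambda_2+\lambda_3=0$ we have $\mu_1+\mu_2+\mu_3=6(1+m)\ell^2$, and the $\mu_j$ are, up to scaling, the roots of the cubic resolvent of $\mathfrak m$; the $\Theta_j$ then become $\Theta_j=\frac{6}{\pi\ell\mu_j}\Pi\!\left(\frac{6m\ell^2}{\mu_j},m\right)$ exactly as in \eqref{4S.2.F13.tris}.

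The heart of the argument is a monotonicity/convexity property of the map $\mu\mapsto \frac{1}{\mu}\Pi(6m\ell^2/\mu,m)$ together with the fact that the two "useful" branches $\mu_2,\mu_3$ sweep out a one-parameter family as $(m,\ell)$ varies. Concretely, I would pass to coordinates adapted to the problem: fix $m$ and let $\ell$ vary, or better, introduce the variables $(p,q)$ where $p$ is essentially the discriminant $\mathtt p(m,\ell)$ and $q=\widetilde{\mathtt p}(m,\ell)$ from \eqref{dfnL}, and compute $\partial\Theta_j/\partial\lambda_k$ via the Legendre-type differentiation formulas for $\Pi$. The classical identity
$$
\frac{\partial}{\partial n}\,\Pi(n,m)=\frac{1}{2(m-n)(n-1)}\Big(E(m)+\frac{m-n}{n}K(m)+\frac{n^2-m}{n}\Pi(n,m)\Big)
$$
lets one express all first derivatives of $\Theta_j$ in closed form in terms of $K(m),E(m),\Pi(\,\cdot\,,m)$. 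Substituting these into $\det J(\Theta)$ and clearing the (sign-definite) prefactors, the claim reduces to showing that a certain explicit combination of complete elliptic integrals — a function of $(m,\ell)$, or equivalently of $(m,q)$ on the region cut out by $\mathtt p>0$ — is strictly positive.

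The main obstacle I anticipate is precisely this last reduction: proving positivity of an explicit but unwieldy expression in $K,E,\Pi$. I would handle it by a combination of (i) reducing to a one-variable inequality after exploiting homogeneity (the $\ell$-scaling acts on $\mu_j$ and $\Theta_j$ in a controlled way, so $\det J(\Theta)$ has a definite weight in $\ell$ and the sign only depends on $m$ and a normalized root ratio), and (ii) using the integral representations directly: writing $\det J(\Theta)$ as a double integral over $[0,\omega]\times[0,\omega]$ of an antisymmetrized kernel, one gets
$$
\det J(\Theta)\ \propto\ \iint \big(f_2(s)g_3(t)-f_2(t)g_3(s)\big)\,ds\,dt
$$
where $f_j=(4\kappa+3\lambda_j)^{-1}$ and $g_j$ is its $(m,\ell)$-derivative; a Chebyshev-type (covariance) inequality applies once one checks that $s\mapsto f_j(s)$ is monotone on a fundamental half-period of the even function $\kappa_{m,\ell}$, which it is because $\kappa_{m,\ell}$ is monotone there. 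This symmetrization argument is the cleanest route and avoids grinding through the elliptic-integral identities, though one still must verify the sign of the ordering between the $j=2$ and $j=3$ branches, which follows from $\lambda_2<\lambda_3$ and $\mathtt p(m,\ell)>0$. I would present the symmetrization proof as the main line and relegate the elliptic-integral bookkeeping to a verification that the relevant monotonicity holds; the software-assisted checks mentioned in the introduction can corroborate the final inequality numerically on $\mathcal D$.
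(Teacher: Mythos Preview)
Your plan has a genuine gap at the decisive step. You correctly set up the differentiation of $\Theta_j$ via the Legendre formula for $\partial_n\Pi(n,m)$, but you then anticipate ``an unwieldy expression in $K,E,\Pi$'' and propose to control its sign by a double-integral symmetrization. Two problems arise. First, the symmetrization you write down does not represent the Jacobian: the four integrands $\partial_m f_2,\partial_\ell f_2,\partial_m f_3,\partial_\ell f_3$ are four different functions of $s$ (both $\kappa_{m,\ell}$ and $\lambda_j$ depend on $(m,\ell)$), so $(\int\partial_m f_2)(\int\partial_\ell f_3)-(\int\partial_\ell f_2)(\int\partial_m f_3)$ does not collapse to an antisymmetrized kernel $f_2(s)g_3(t)-f_2(t)g_3(s)$; a Chebyshev covariance argument needs a product structure you have not established. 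Second, and more importantly, you have missed the structural miracle that makes the computation tractable: when you combine the $\partial_n\Pi$ formula with the $\partial_m\Pi$ formula \emph{and} with the overdetermined PDE system that the eigenvalues $\lambda_j$ themselves satisfy (coming from differentiating the characteristic polynomial), the $\Pi$-terms cancel completely. Each partial derivative $\partial_m\Theta_j$ and $\partial_\ell\Theta_j$ is a linear combination of $K(m)$ and $E(m)$ alone, with rational coefficients in $(m,\ell,\lambda_j)$.

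Once you see this, the determinant is a homogeneous quadratic in $(K,E)$ over the function field, and in fact factors as a sign-definite rational prefactor times $3E(m)^2+(1-m)K(m)^2+2(m-2)E(m)K(m)$, which depends only on $m$. Checking the sign of this single one-variable combination of complete integrals (and of the prefactor, which is controlled by $\lambda_2<\lambda_3$ and the location of $\lambda_j^2$ relative to $\tfrac{4}{9}(1-m+m^2)\ell^4$) is then routine. So the missing ingredient is not a clever inequality but the algebraic observation that the eigenvalues satisfy a first-order system in $(m,\ell)$ which, together with the two Legendre identities for $\Pi$, eliminates $\Pi$ from $\nabla\Theta_j$ entirely. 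I would redo the computation with that system in hand before pursuing the symmetrization route.
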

\begin{proof}{
Let $\Phi^a_{j,b}:\mathcal{D}\to \R$, $j=2,3$, $a,b=1,2$ be the real-analytic functions
\[
\begin{cases}
\Phi^1_{j,1}(m,\ell)&=\frac{9\lambda_j(m, \ell) + 6 (m - 1) \ell^2}{\pi m \ell \left(4 ((m-1) m+1) \ell^4-9\lambda_j(m,\ell)^2\right)},\\
\Phi^1_{j,2}(m,\ell)&=\frac{9\lambda_j(m,\ell)+6 (2 m-1) \ell^2}{\pi(m-1) m \ell \left(4 ((m-1) m+1) \ell^4-9\lambda_j(m,\ell)^2\right)},\\
\Phi^2_{j,1}(m,\ell)&=\frac{18\lambda_j(m,\ell)+12 (m-2) \ell^2}{\pi \ell^2 (4 ((m-1) m+1)\ell^4-9 \lambda_j(m,\ell)^2)},\\
\Phi^2_{j,2}(m,\ell)&=\frac{36}{\pi (4 ((m-1) m+1)\ell^4-9 \lambda_j(m,\ell)^2)}.
\end{cases}\]
We prove that
\begin{equation}\label{PDT}
\begin{cases}
\partial_m \Theta_j\big|_{(m,\ell)}&=\Phi^1_{j,1}(m,\ell){\rm K}(m)+\Phi^1_{j,2}(m,\ell){\rm E}(m),\\
\partial_{\ell} \Theta_j\big|_{(m,\ell)}&=\Phi^2_{j,1}(m,\ell){\rm K}(m)+\Phi^2_{j,2}(m,\ell){\rm E}(m),
\end{cases}
\end{equation}
where ${\rm K}$ and ${\rm E}$ are the complete elliptic integrals of the first and second kind respectively. In fact, $\lambda_2$ and $\lambda_3$ are solutions of the overdetermined system of PDE
\begin{equation}\label{ODS}
\begin{cases}
\partial_m f\big|_{(m,\ell)}&=\frac{4}{3}\left(\frac{2(1 - 2 (m-1) m) \ell^6 + 3 (1 - 2 m) \ell^4 f(m,\ell)}
{4 (1 + (m -1) m) \ell^4 - 9 f(m,\ell)^2}\right),\\
\partial_{\ell} f\big|_{(m,\ell)}&=-\frac{16}{3}\left(\frac{(m-2) (1 + m) (2 m-1) \ell^5 + 3 (1 + (m -1) m) 
\ell^3 f(m, \ell)}{4 (1 + (m-1) m) \ell^4 - 9 f(m, \ell)^2}\right).
\end{cases}
\end{equation}
and the partial derivatives of the complete integral of the third kind are given by
\begin{equation}\label{PDP}
\begin{cases}
\partial_n \Pi\big|_{(n,m)}=&\frac{n {\rm E}(m) + (m - n){\rm K}(m) + (n^2-m) \Pi(n,m)}{2 (m - n) (n-1) n},\\
\partial_m \Pi\big|_{(n,m)}=&\frac{{\rm E}(m)}{2 (m-1) (n-m)}+\frac{\Pi(n,m)}{2 (n-m)}.
\end{cases}
\end{equation}
Then, (\ref{PDT}) follows immediately from (\ref{4S.2.F13.tris}), (\ref{ODS}) and (\ref{PDP}). Using (\ref{PDT}) we obtain
\[\begin{cases}{\rm det}(J(\Theta))\big|_{(m,\ell)}=\frac{108}{\varrho(m,\ell)}
\left(3{\rm E}(m)^2+(1-m){\rm K}(m)^2+2(m-2){\rm E}(m){\rm K}(m)\right),\\
\varrho(m,\ell)=\frac{\pi^2\ell m(m-1)}{\lambda_2(m,\ell)-\lambda_3(m,\ell)}\prod_{j=2,3} \left(4 (1 - m + m^2) \ell^4 - 9\lambda_j(m,\ell)^2\right).\end{cases}\]
Since $\varrho(m,\ell)<0$ and $3{\rm E}(m)^2+(1-m){\rm K}(m)^2+2(m-2){\rm E}(m){\rm K}(m)<0$, for every $(m,\ell)\in {\mathcal D}$, then ${\rm det}(J(\Theta))$ is strictly positive on ${\mathcal D}$, as claimed.
}\end{proof}

\begin{prop}\label{PII}{The period map is injective.}\end{prop}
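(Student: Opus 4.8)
The plan is to upgrade the local result of Proposition \ref{PI} to global injectivity by foliating $\mathcal{D}$ with the level curves of one of the two coordinate functions, say $\Theta_2$, and deducing injectivity from two facts: that every level set $\Theta_2^{-1}(c)$ is connected, and that $\Theta_3$ restricted to it separates points. The second fact is essentially free. The vector field $(-\partial_\ell\Theta_2,\,\partial_m\Theta_2)$ is tangent to the level curves of $\Theta_2$ and is non-vanishing once we know $\Theta_2$ has no critical point; along one of its flow lines the derivative of $\Theta_3$ equals ${\rm det}(J(\Theta))$, which is strictly positive on $\mathcal{D}$ by Proposition \ref{PI}. Hence $\Theta_3$ is strictly monotone, and in particular injective, on every connected level set of $\Theta_2$. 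Granting connectedness of the level sets, injectivity of $\Theta$ follows immediately: if $\Theta(p)=\Theta(q)$ then $\Theta_2(p)=\Theta_2(q)$, so $p$ and $q$ lie on the same connected level set of $\Theta_2$, and then $p=q$ because $\Theta_3$ separates points there.

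The substantive part is therefore to show that $\Theta_2^{-1}(c)$ is connected for every $c$. First I would prove that $\partial_\ell\Theta_2$ never vanishes on $\mathcal{D}$: by (\ref{PDT}) this amounts to verifying $\Phi^2_{2,1}(m,\ell)\,{\rm K}(m)+\Phi^2_{2,2}(m,\ell)\,{\rm E}(m)\neq 0$, where $\Phi^2_{2,2}$ is a positive multiple of $\bigl(4(1-m+m^2)\ell^4-9\lambda_2(m,\ell)^2\bigr)^{-1}$, a factor whose sign is already pinned down in the proof of Proposition \ref{PI}, and ${\rm E}(m)>0$; the term $\Phi^2_{2,1}{\rm K}(m)$ then has to be estimated against this one, using classical inequalities such as $(1-m){\rm K}(m)<{\rm E}(m)<{\rm K}(m)$, the monotonicity of $m\mapsto{\rm E}(m)/{\rm K}(m)$, and, if convenient, the PDE system (\ref{ODS}) to rewrite $\lambda_2$. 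Non-vanishing of $\partial_\ell\Theta_2$ forces $\Theta_2$ to be strictly monotone on each vertical segment $\{m\}\times(\mathfrak{l}(m),+\infty)$, so $\Theta_2^{-1}(c)$ is a disjoint union of graphs $\ell=g_c(m)$ over open subsets of $(0,1)$. To see that there is exactly one such graph I would compute the boundary limits $\lim_{\ell\to\mathfrak{l}(m)^+}\Theta_2(m,\ell)$ and $\lim_{\ell\to+\infty}\Theta_2(m,\ell)$ — which are needed anyway for Proposition \ref{PIII} — and show that the two resulting functions of $m$ are themselves strictly monotone; this makes the set of $m$ for which $c$ lies strictly between these two bounding curves an interval, whence $\Theta_2^{-1}(c)$ is a single arc.

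I expect the non-vanishing of $\partial_\ell\Theta_2$, together with the monotonicity of its two boundary limits, to be the genuine obstacle: after eliminating $\lambda_2$ these reduce to one-variable inequalities between ${\rm K}(m)$, ${\rm E}(m)$ and explicit algebraic functions of $m$, and although each such inequality is elementary, establishing them rigorously forces the kind of case analysis that makes this Proposition technical. There is some freedom in the choice of foliation: if $\Theta_2$ turns out not to be monotone in $\ell$ throughout $\mathcal{D}$, the same scheme applies verbatim with $\Theta_2$ replaced by $\Theta_3$, or by $\Theta_2+\Theta_3$ — which, via the relation $\Theta_1+\Theta_2+\Theta_3\cong_{\Z}0$ obtained in the proof of Theorem \ref{closureconditions}, is essentially $-\Theta_1$ — and the symmetric roles of $\lambda_1,\lambda_2,\lambda_3$ make it plausible that at least one of these three foliations has the required property. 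Finally, as a fallback independent of any monotonicity, injectivity also follows from Hadamard's global inversion theorem once $\Theta$ is known to be a proper map onto $\mathcal{M}$; but establishing properness is exactly what Proposition \ref{PIII} does, so the foliation argument is the more self-contained route to injectivity.
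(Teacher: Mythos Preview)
Your plan is correct and is in substance the paper's argument. The paper likewise proves $\partial_\ell\Theta_2>0$ (Step I), computes the boundary limits $\lim_{\ell\to\mathfrak l(m)^+}\Theta_2(m,\ell)=\vartheta(m)$ and $\lim_{\ell\to\infty}\Theta_2(m,\ell)=1/2$ (Step III; note the upper limit is the constant $1/2$, not a strictly monotone function of $m$, but since $\vartheta$ is strictly decreasing this still makes $\{m:\vartheta(m)<c<1/2\}$ an interval), and concludes that every $\Theta_2^{-1}(c)$ is a single graph $\ell=\varphi_c(m)$ over an interval (Step V). The one place where the paper does more than you is that it runs the parallel analysis for $\Theta_3$ (Steps II, IV, VI) and then finishes by showing that the two graph families $\varphi_c,\psi_{c'}$ cross with fixed sign; your observation that already $\frac{d}{dm}\Theta_3(m,\varphi_c(m))=-\det J(\Theta)/\partial_\ell\Theta_2$ has fixed sign on each connected level curve of $\Theta_2$ is a genuine economy and makes the separate sign determinations of $\partial_m\Theta_2$, $\partial_m\Theta_3$, $\partial_\ell\Theta_3$ unnecessary for injectivity. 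The technical core you correctly flag --- the inequality $\partial_\ell\Theta_2>0$ and the monotonicity of $\vartheta$ --- is exactly what the paper addresses in Steps I and III, the former via the auxiliary function $\widetilde f_m(\ell)$ and the classical inequality ${\rm E}(m)+(m-1){\rm K}(m)>0$.
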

\begin{proof}{The proof is organized in six steps, a comment and a conclusion. 

\noindent {\bf Step I}. In the first step we prove that
\begin{equation}\label{IN}
\partial_m \Theta_2<0,\quad \partial_{\ell} \Theta_2>0.
\end{equation}
Since $4 (1+(m-1) m)\ell^4 -9\lambda_2(m,\ell)^2 >0$, then (\ref{PDT}) implies that $\partial_m \Theta_2<0$ if and only if
\begin{equation}\label{f1}\left(6 (2 m-1) \ell^2 + 9 \lambda_2(m,\ell)\right){\rm E}(m)+\left(6 (m-1)^2 \ell^2 + 
      9 (m-1)\lambda_2(m,\ell)\right){\rm K}(m)>0
\end{equation}
and that $\partial_{\ell} \Theta_2>0$ if and only if
\begin{equation}\label{f2}36 \ell^2 {\rm E}(m) + \left(12 (m-2) \ell^2 + 18\lambda_2(m,\ell)\right){\rm K}(m)>0.\end{equation}
We prove (\ref{f1}): let $g(m,\ell)$ be the the left hand side of (\ref{f1}). We claim that,
for every $m\in (0,1)$, the function  $f_{m} : \ell\in ({\mathfrak l}(m),+\infty)\to g(m,\ell)/ 6\ell^2$ is strictly decreasing.
From (\ref{ODS}) and keeping in mind that
\begin{equation}\label{dEK}{\rm K}'|_m=\frac{{\rm E}(m)-(1-m){\rm K}(m)}{2 (1 - m) m},\quad
{\rm E}'|_m=\frac{{\rm E}(m)-{\rm K}(m)}{2m}\end{equation}
we obtain
$$f_{m}'|_{\ell}=-96\sqrt{3}\frac{\sqrt{1 - m + m^2}\cos\left(\frac{1}{3}\arcsin\left(\widetilde{{\mathtt p}}(m,\ell)\right)\right)\left(\rm{E}(m)+(m-1)\rm{K}(m)\right)}{\ell\sqrt{{\mathtt p}(m,\ell)}},
$$
where ${\mathtt p}$ and $\widetilde{{\mathtt p}}$ are as in (\ref{dfnL}). Since
\begin{equation}\label{dEK1}\rm{E}(m) + (m-1) \rm{K}(m)>0,\quad \forall m\in (0,1)\end{equation}
and 
\begin{equation}\label{f3}-1<\widetilde{{\mathtt p}}(m,\ell)<1,\quad \forall (m,\ell)\in {\mathcal D},\end{equation}
we infer that $f_{m}'|_{\ell}<0$, for every $\ell\in ({\mathfrak l}(m),+\infty)$, as claimed. Then,
$$\frac{g(m,\ell)}{6\ell^2}=f_{m}(\ell) > \lim \limits_{\ell\rightarrow
+\infty} f_{m}(\ell) =2(2 m-1) \rm{E}(m) + (m-1) (3 m-2) \rm{K}(m)>0.$$
Now we prove (\ref{f2}). The reasoning is similar to the previous one. Let $\widetilde{g}(m,\ell)$ be the left hand side of (\ref{f2}). Then,
${\widetilde g}(m,\ell)/12\ell^2=3 \rm{E}(m) +\rm{K}(m)r(m,\ell)$ where 
$r(m,\ell)=(m-2 - 2 \sqrt{1 - m + m^2}\sin(\arcsin(\widetilde{{\mathtt p}}(m,\ell))/3)$.
We claim that, for every $m\in (0,1)$, the function ${\widetilde f}_{m} : \ell \to r(m,\ell)$ is strictly decreasing. From (\ref{ODS}) and (\ref{f3}) we obtain
$${\widetilde f}_{m}'|_{\ell}= - 96\sqrt{3}\frac{\sqrt{1 - m + m^2}\cos\left(\frac{1}{3}\arcsin\left(\widetilde{{\mathtt p}}(m,\ell)\right)\right)}{\ell\sqrt{{\mathtt p}(m,\ell)}}<0.$$
Then,
$$\frac{{\widetilde g}(m,\ell)}{12\ell^2}=3 {\rm E}(m) +{\rm K}(m){\widetilde f}_{m}(\ell)>
3 {\rm E}(m) +{\rm K}(m)\lim \limits_{\ell\rightarrow
+\infty} {\widetilde f}_{m}=3 \rm{E}(m) + 3(m-1)\rm{K}(m)>0.
$$

\noindent {\bf Step II}. We prove that
\begin{equation}\label{IN2}
\partial_m \Theta_3>0,\quad \partial_{\ell} \Theta_3<0.
\end{equation}
Since $4 (1+(m-1) m)\ell^4 -9\lambda_3(m,\ell)^2 <0$, then $\partial_m \Theta_3>0$ if and only if
\begin{equation}\label{f4}\left(6 (2m-1) \ell^2 + 9 \lambda_3(m,\ell)\right){\rm E}(m)+\left(6 (m-1)^2 \ell^2 + 
      9 (m-1)\lambda_3(m,\ell)\right){\rm K}(m)>0
\end{equation}
and  $\partial_{\ell} \Theta_3<0$ if and only if
\begin{equation}\label{f5}36 \ell^2 {\rm E}(m) +\left (12 (-2 + m) \ell^2 + 18\lambda_3(m,\ell)\right){\rm K}(m)>0.\end{equation}
We prove (\ref{f4}): denote by ${\mathtt g}(m,\ell)$ the left hand side of (\ref{f4}). Given $m\in (0,1)$, we put ${\mathtt f}_{m} : \ell\in ({\mathfrak l}(m),+\infty)\to {\mathtt g}(m,\ell)/6\ell^2$.
From (\ref{ODS}) and (\ref{dEK}) we obtain
$${\mathtt f}_m'|_{\ell}=
48\frac{\sqrt{1 - m + m^2}\left({\rm E}(m)+(m-1){\rm K}(m)\right)}{\ell\sqrt{\mathtt{p}(m,\ell)}}{\mathtt h}_m(\ell),
$$
where 
\begin{equation}\label{hm}
{\mathtt h}_m(\ell)=\sqrt{3}\cos\left(\frac{1}{3}\arcsin\Big(\widetilde{{\mathtt p}}(m,\ell)\Big)\right)-3\sin\left(\frac{1}{3}\arcsin\Big(\widetilde{{\mathtt p}}(m,\ell)\Big)\right).
\end{equation}
 Then, (\ref{dEK1}) and (\ref{f3}) imply that ${\mathtt f}_m'>0$. Hence,
\begin{multline}\frac{{\mathtt g}(m,\ell)}{6\ell^2}={\mathtt f}(\ell)>\lim \limits_{\ell\rightarrow
{\mathfrak l}(m)} {\mathtt f} = m{\rm E}(m)+\\+\big(-1 +  m +\sqrt{1 - m + m^2}\big)\big({\rm E}(m)
+(m-1){\rm K}(m)\big)>0.\nonumber
\end{multline}
Now we prove
(\ref{f5}). Let $\widetilde{{\mathtt g}}(m,\ell)$ be the left hand side of (\ref{f5}). Given $m\in (0,1)$, we consider the function 
$\widetilde{{\mathtt f}}_m : \ell\in ({\mathfrak l}(m),+\infty)\to \widetilde{\mathtt{g}}(m,\ell)/6\ell^2$.
Proceeding as above, we get
$$\widetilde{{\mathtt f}}_m'|_{\ell}= \frac{96\sqrt{1 - m + m^2}}{\ell\sqrt{\mathtt{p}(m,\ell)}} {\rm K}(m)\mathtt{h}_m(\ell)>0,$$
where $\mathtt{h}_m(\ell)$ is defined as in (\ref{hm}).
Then,
$$\frac{{\mathtt g}(m,\ell)}{6\ell^2}=\widetilde{{\mathtt f}}(\ell)>\lim \limits_{\ell\rightarrow
{\mathfrak l}(m)} \widetilde{\mathtt{f}} = 6 {\rm E}(m) + 2 (-2 + m + \sqrt{1 - m + m^2}) {\rm K}(m)>0.$$

\noindent {\bf Step III}. 
We show that $\Theta_2(\mathcal{D}) = (1/4,1/2)$. Since $\Theta_2$ is strictly increasing with respect to the second variable, to verify that ${\rm sup}(\Theta_2(\mathcal{D}))= 1/2$ it suffices to show that, for each $m\in (0,1)$, the limit of $\Theta_2(m,\ell)$ as $\ell\to +\infty$ is equal to $1/2$. To this end we observe that
\[
\lim \limits_{n\rightarrow
1^-}\left(\Pi(n,m)\sqrt{1-n}\right)=\frac{\pi}{2\sqrt{1-m}},\quad
\lim \limits_{\ell\rightarrow
+\infty}\frac{6\ell^2}{2\lambda_2(m,\ell)+2(1+m)\ell^2}=1\]
This implies
\begin{equation}\label{PL5.F1}\lim \limits_{\ell\rightarrow
+\infty}\Theta_2(m,\ell)= \lim \limits_{\ell\rightarrow
+\infty} \frac{3}{\sqrt{{\rm F}(m,\ell)}}= \lim \limits_{r\rightarrow
0^+} \frac{3}{\sqrt{{\rm F}(m,1/r)}}
\end{equation}
where
$${\rm F}(m,\ell)=\ell^2 (1 - m) (3\lambda_2(m,\ell) + 
    2 (1 + m) \ell^2) (3\lambda_2(m,\ell) + 2 (1 + m) \ell^2 - 6 m \ell^2)).$$
Note that ${\rm F}(m,1/r)=(1-m)r^{-6}{\rm A}(m,r){\rm B}(m,r)$ where
\[\begin{cases}{\rm A}(m,r)&=2(1+m)-4\sqrt{1 + (-1 + m) m}\sin\left(\alpha(m,r)\right),\\
{\rm B}(m,r)&=-6m+2(1+m)-4\sqrt{1 + (-1 + m) m}\sin\left(\alpha(m,r)\right),\\
\alpha(m,r)&=\frac{1}{3}\arcsin\left(\frac{-27 r^6 + (-2 + m) (1 + m) (-1 + 2 m)}{2 (1 + (-1 + m) m)^{3/2}}\right).
\end{cases}\]
We fix $m\in (0,1)$. Taking the Taylor expansions of ${\rm A}(m,r)$ and ${\rm B}(m,r)$ at $r=0$, we obtain
\[\begin{cases}{\rm A}(m,r)=&2(1+m)-4\sqrt{1 + (-1 + m) m}\sin\left(\alpha(m,0)\right)+\\
&+\frac{4\sqrt{3}\sqrt{1 + (-1 + m) m}}{m(1-m)}\cos\left(\alpha(m,0)\right)r^6+O(r^9),\\
{\rm B}(m,r)=&2(1-2m)-4\sqrt{1 + (-1 + m) m}\sin\left(\alpha(m,0)\right)+\\
&+\frac{4\sqrt{3}\sqrt{1 + (-1 + m) m}}{m(1-m)}\cos\left(\alpha(m,0)\right)r^6+O(r^9).
\end{cases}
\]
Observing that
$$\sin\left(\alpha(m,0)\right)=\frac{1-2m}{2\sqrt{1+(m-1)m}},\quad 
\cos\left(\alpha(m,0)\right)=\frac{\sqrt{3}}{2\sqrt{1+(m-1)m}},
$$
we have
\[{\rm A}(m,r) \sim 6\left(m+\frac{r^6}{m(1-m)} \right),\hspace{2pt}
{\rm B}(m,r)\sim \frac{6 r^6}{m(1-m)},\hspace{3pt} {\rm as}\hspace{3pt} r\to 0.
\]
Then, $\lim \limits_{r\rightarrow
0^+}{\rm F}(m,1/r)=36$. This implies $\lim \limits_{\ell\rightarrow
\infty}\Theta_2(m,\ell)=1/2$, for every $m\in (0,1)$, as claimed. Next we prove that ${\rm inf}(\Theta_2({\mathcal D}))=1/4$. Preliminarily we observe that
\begin{equation}\label{FF0}\vartheta(m):=\lim \limits_{\ell\rightarrow
{\mathfrak l}(m)} \Theta_2(m,\ell)=\frac{3(1-m)m\ \Pi\left(m+1-\sqrt{1-m(1-m)},m\right)}{\pi(m+1+\sqrt{1-m(1-m)}) \phi(m)}
\end{equation}
where $\phi(m)$ is the positive square root of
\[ m\left(m (2\sqrt{(m - 1) m + 1}+3-2 m) -2\sqrt{(m - 1) m + 1} + 3\right)+2(\sqrt{(m-1) m+1}-1).\]
Then, $\vartheta$ is strictly decreasing and $\lim \limits_{m\rightarrow
1^-}\vartheta(m)=1/4$. On the other hand, $\Theta_2(m,\ell)$ is strictly increasing with respect to the variable $\ell$ and
$$\lim \limits_{\ell\rightarrow
{\mathfrak l}(m)} \Theta_2(m,\ell)=\vartheta(m)>\lim \limits_{m\rightarrow 1^-}\vartheta(m) = 1/4.$$
This implies that ${\rm inf}(\Theta_2({\mathcal D}))=1/4$.

\noindent {\bf Step IV}. We prove that $\Theta_3(\mathcal{D}) = (0,1/2\sqrt{3})$. $\Theta_3$ is strictly decreasing with respect to the variable $\ell$. Then ${\rm inf}(\Theta_3({\mathcal D}))=0$ if and only if
$\lim \limits_{\ell\rightarrow
+\infty}\Theta_3(m,\ell)=0$, $\forall m\in (0,1)$. Indeed, from
\[\begin{split}
& \lim \limits_{\ell\rightarrow
+\infty}\frac{1}{3\lambda_3(m,\ell)\ell+2(1+m)\ell^3}= \frac{1}{6\ell^3},\\
&\lim \limits_{\ell\rightarrow
+\infty} \Pi\left(\frac{6m\ell^2}{2(1+m)\ell^2+3\lambda_3(m,\ell)},m \right)=\frac{{\rm E}(m)}{1-m}
\end{split}\]
we have
$$\lim \limits_{\ell\rightarrow
+\infty} \Theta_3(m,\ell) =  \lim \limits_{\ell\rightarrow
+\infty}\frac{6 \Pi\left(\frac{6m\ell^2}{2(1+m)\ell^2+3\lambda_3(m,\ell)},m \right)}{\pi\ell (2 (1 + m) \ell^2 + 3 \lambda_3(m,\ell))}=\frac{{\rm E}(m)}{\pi(1-m)}\lim \limits_{\ell\rightarrow
+\infty}\frac{1}{\ell^3}=0.$$
The functions $\vartheta$ and $\ell\to \Theta_3(m,\ell)$ are strictly decreasing and, in addition
\begin{equation}\label{PL5Bis.F3}
\lim \limits_{\ell\rightarrow
{\mathfrak l}(m)} \Theta_3(m,\ell)=\vartheta(m)>\lim \limits_{m\rightarrow 0^+}\vartheta(m) = 1/2\sqrt{3}.
\end{equation}
Then, ${\rm sup}(\Theta_3({\mathcal D}))=1/2\sqrt{3}$.

\noindent {\bf Comment}.
\begin{figure}[h]
\begin{center}
\includegraphics[height=6.2cm,width=6.2cm]{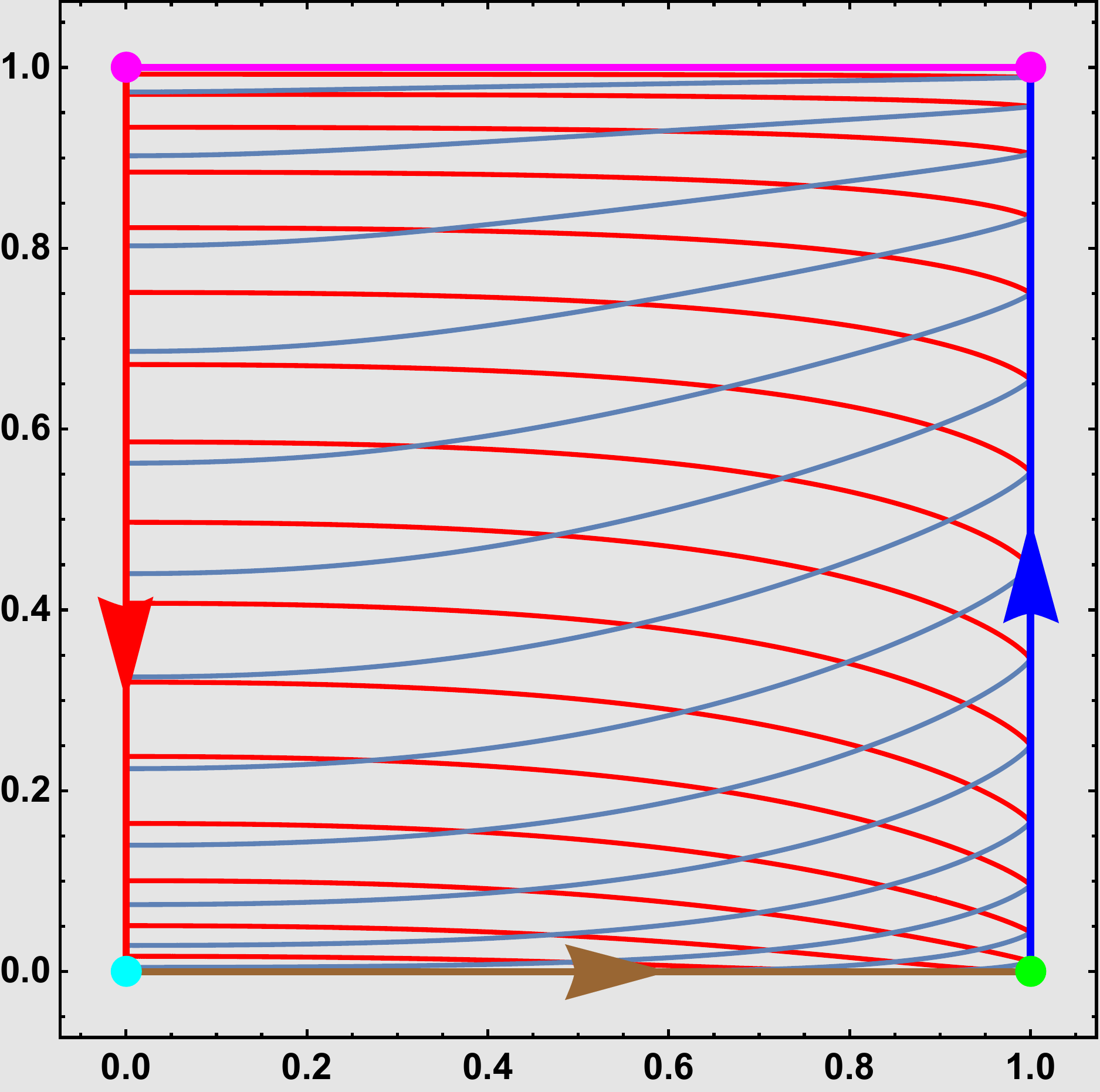}
\includegraphics[height=6.2cm,width=6.2cm]{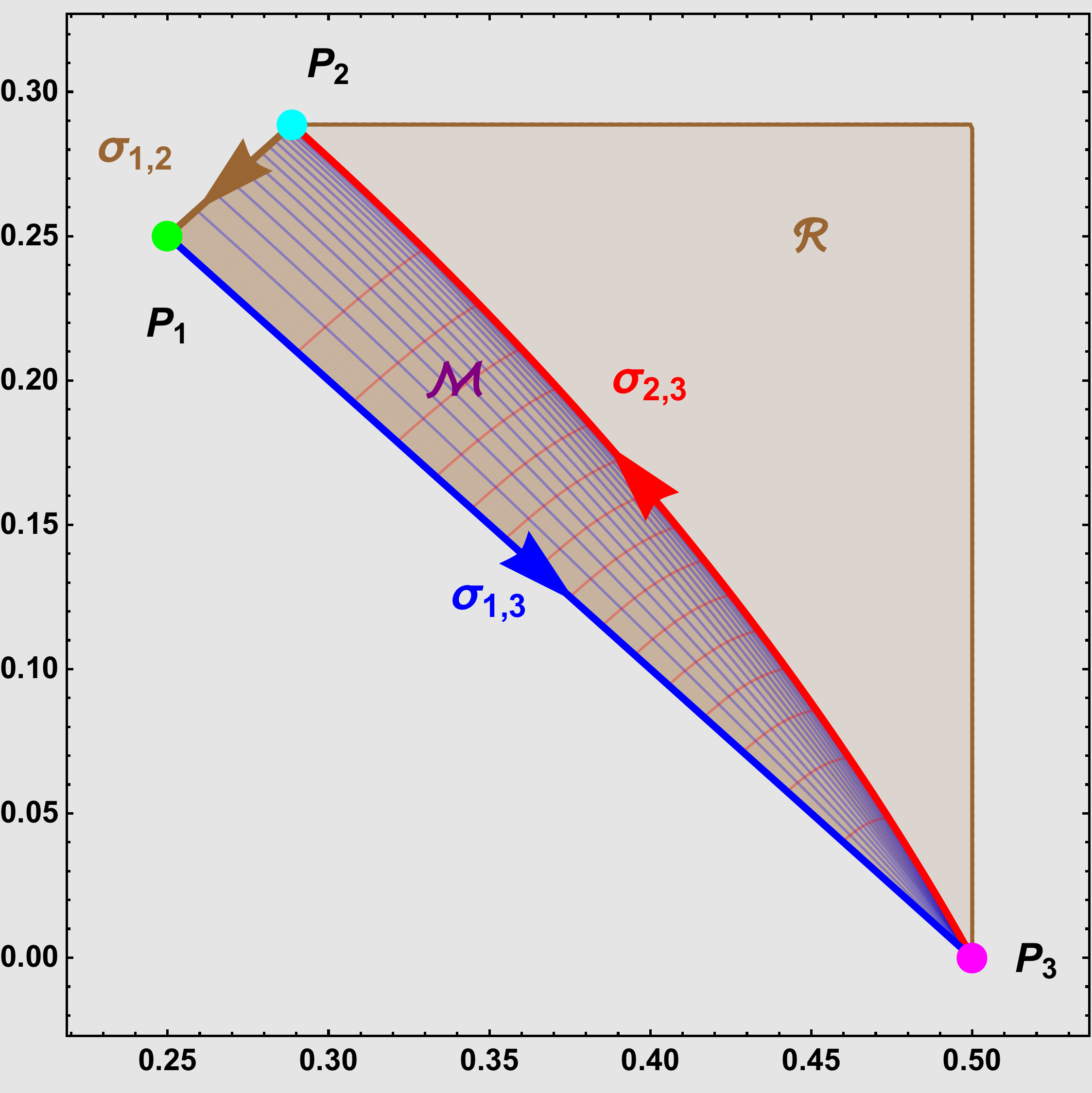}
\caption{\small{Left: the level curves of the components of $\widetilde{\Theta}_2$ (blue) and $\widetilde{\Theta}_3$ (red) of the modified period map $\widetilde{\Theta}$. Right: plot of the modified period map, the monodromic domain $ {\mathcal M}$ and the polygonal region $\mathcal{R}$.
 }}\label{FIG4}
\end{center}
\end{figure}
 For every ${\mathtt x}_2\in (1/4,1/2)$ and every ${\mathtt x}_3\in (0,1/2\sqrt{3})$, we denote by $\mathcal{C}_{2}({\mathtt x}_2)$ and $\mathcal{C}_{3}({\mathtt x}_3)$ the level curves
\[
\mathcal{C}_{2}({\mathtt x}_2)=\{(m,\ell)\in {\mathcal D}: \Theta_2(m,\ell)={\mathtt x}_2\},\quad
\mathcal{C}_{3}({\mathtt x}_3)=\{(m,\ell)\in {\mathcal D}: \Theta_3(m,\ell)={\mathtt x}_3\}.\]
Since the partial derivatives of $\Theta_2$ and $\Theta_3$ are non-zero at each point $(m,\ell)\in \mathcal{D}$, then $\mathcal{C}_{2}({\mathtt x}_2)$ and $\mathcal{C}_{3}({\mathtt x}_3)$ are smooth embedded curves, for every ${\mathtt x}_2$ and ${\mathtt x}_3$. To prove the injectivity of $\Theta$ we show that $\mathcal{C}_{2}({\mathtt x}_2)$ and $\mathcal{C}_{3}({\mathtt x}_3)$ are either disjoint or have only one point of intersection. This follows from  the next two steps.

\noindent {\bf Step V}. We claim that there exist a function ${\mathfrak m}_2:(1/4,1/2)\to (0,1)$ such that, for every ${\mathtt x}_2\in (1/4,1/2)$, the curve $\mathcal{C}_{2}({\mathtt x}_2)$ is the graph of a strictly increasing differentiable function
$\varphi_{{\mathtt x}_2}:({\mathfrak m}_2({\mathtt x}_2),1)\to \R$ satisfying
\begin{equation}\label{PL6.F1}{\mathfrak l}(m)<\varphi_{{\mathtt x}_2}(m),\quad \varphi_{{\mathtt x}_2}'\big|_m=-\frac{\partial_m\Theta_2}{\partial_{\ell}\Theta_2}\Big|_{(m,{\varphi_{{\mathtt x}_2}}(m))}.
\end{equation}
The function $\vartheta$ is continuous and strictly decreasing on $[0,1]$, is differentiable on $(0,1)$ and ${\rm Im}(\vartheta)=[1/4,1/2\sqrt{3}]$. Its inverse $\vartheta^{-1}:[1/4,1/2\sqrt{3}]\to [0,1]$ is continuous, strictly decreasing and differentiable on $(1/4,1/2\sqrt{3})$. If $c\in (1/4,1/2\sqrt{3})$, then  $m_c=\vartheta^{-1}(c)$ is the unique element of $(0,1)$ such that
$\lim \limits_{\ell\rightarrow
{\mathfrak l}(m_c)}\Theta_2(m_c,\ell)=\vartheta|_{m_c}=c$.
If ${\mathtt x}_2\ge 1/2\sqrt{3}$, we put $\mathfrak{m}_2({\mathtt x}_2)=0$. We prove that, for every $m\in (0,1)$, the equation $\Theta_2(m,\ell)= {\mathtt x}_2$ has a unique solution $\varphi_{{\mathtt x}_2}(m)\in (\mathfrak{l}(m),+\infty)$. Indeed,  $f_m : \ell \in (\mathfrak{l}(m),+\infty)\to \Theta_2(m,\ell)$ is a differentiable, strictly increasing function satisfying
$\lim \limits_{\ell\rightarrow
{\mathfrak l}(m)} f_m(\ell)=\vartheta(m)<1/2\sqrt{3}$ and $\lim \limits_{\ell\rightarrow +\infty} f_m(\ell) =1/2$.
Consequently, there exist a unique $\varphi_{{\mathtt x}_2}(m)\in (\mathfrak{l}(m),+\infty)$ such that 
$\Theta_2(m,\varphi_{{\mathtt x}_2}(m)) = {\mathtt x}_2$. Then, $\mathcal{C}_{2}({\mathtt x}_2)$ is the graph of the function $\varphi_{{\mathtt x}_2} : m\to \varphi_{{\mathtt x}_2}(m)$. We prove that $\varphi_{{\mathtt x}_2}$ is differentiable and that its derivative is as in (\ref{PL6.F1}). Since $\mathcal{C}_{2}({\mathtt x}_2)$ is smooth, for every $m_*\in (0,1)$ there exist a smooth embedding $\beta=(\beta_2,\beta_3):(-\epsilon,\epsilon)\to {\mathcal D}$ such that $\beta(0)=(m_*,\varphi_{{\mathtt x}_2}(m_*))$ and that $\beta((-\epsilon,\epsilon))\subset \mathcal{C}_{2}({\mathtt x}_2)$. Since the partial derivatives of $\Theta_2$ are never zero, the derivatives of $\beta_2$ and $\beta_3$ are non-zero, for each $t\in (-\epsilon,\epsilon)$. Hence, $\beta_2$ and $\beta_3$ are invertible and, by construction, $\phi_{{\mathtt x}_2}=\beta_3\circ \beta_2^{-1}$. This implies the differentiability of $\phi_{{\mathtt x}_2}$. Differentiation of $\Theta_2(m,\phi_{{\mathtt x}_2}(m))={\mathtt x}_2$ with respect to $m$ implies that the derivative of $\phi_{{\mathtt x}_2}$ is as in  (\ref{PL6.F1}).  If $1/4<{\mathtt x}_2< 1/2\sqrt{3}$, we put $\mathfrak{m}_2({\mathtt x}_2)=\vartheta^{-1}({\mathtt x}_2)$. Let $(m_*,\ell_*)$ be a point of  $\mathcal{C}_{2}({\mathtt x}_2)$. We prove that $m_*>{\mathfrak m}_2({\mathtt x}_2)$. By contradiction, suppose that $m_*\le {\mathfrak m}_2({\mathtt x}_2)$ and $\Theta_2(m_*,\ell_*)={\mathtt x}_2$. Since $\Theta_2(m,\ell)$ is strictly increasing with respect to the variable $\ell$ and $\vartheta$ is strictly decreasing, then
${\mathtt x}_2=\Theta_2(m_*,\ell_*)>\lim \limits_{\ell\rightarrow
{\mathfrak l}(m)} \Theta_2(m_*,\ell) = \vartheta(m_*)\ge \vartheta({\mathfrak m}_2({\mathtt x}_2))= {\mathtt x}_2
$.  Next we show that, for every $m>\mathfrak{m}_2({\mathtt x}_2)$, the equation $\Theta_2(m,\ell)={\mathtt x}_2$ has a unique solution $\varphi_{{\mathtt x}_2}(m)\in (\mathfrak{l}(m),+\infty)$. In fact, 
$\vartheta$ is strictly decreasing and satisfies
$$\vartheta(m)= \lim \limits_{\ell\rightarrow
{\mathfrak l}(m)^+} \Theta_2(m,\ell)< \vartheta(\mathfrak{m}_2({\mathtt x}_2))={\mathtt x}_2,\quad 
$$
while $f_m:\ell \to \Theta_2(m,\ell)$ is strictly increasing and satisfies
$$\lim \limits_{\ell\rightarrow
{\mathfrak l}(m)^+}f_m(\ell)=\vartheta(m)< {\mathtt x}_2< \frac{1}{2\sqrt{3}} < \frac{1}{2}=\lim \limits_{\ell\rightarrow
+\infty}f_m(\ell).
$$
Then, there is a unique $\varphi_{{\mathtt x}_2}(m)\in (\mathfrak{l}(m),+\infty)$ such that
$\Theta_2(m,f_{{\mathtt x}_2}(m))={\mathtt x}_2$, as claimed. Hence, $\mathcal{C}_{2}({\mathtt x}_2)$ is the graph of the function 
$\varphi_{{\mathtt x}_2}:m\in (\mathfrak{m}_2(m),1)\to \varphi_{{\mathtt x}_2}(m)\in (\mathfrak{l}(m),+\infty)$. Using the same arguments as above it is shown that $\varphi_{{\mathtt x}_2}$ is differentiable and that its derivative is as in (\ref{PL6.F1}).

\noindent {\bf Step VI}. We prove the existence of a  function ${\mathfrak m}_3:(0,1/2\sqrt{3})\to (0,1)$ such that, for every 
${\mathtt x}_3\in (0,1/2\sqrt{3})$, the curve $\mathcal{C}_{3}({\mathtt x}_3)$ is the graph of a stricly 
increasing differentiable function $\psi_{{\mathtt x}_3}:(0,{\mathfrak m}_3)\to \R$ such that
\begin{equation}\label{PL7.F1}{\mathfrak l}(m)< \psi_{{\mathtt x}_3}(m)\quad 
\psi_{{\mathtt x}_3}\big|_m=-\frac{\partial_m \Theta_3}{\partial_{\ell} \Theta_3}\Big|_{(m,{\psi_{{\mathtt x}_3}}(m))}.
\end{equation}
If ${\mathtt x}_3\in (0,1/4]$, we put ${\mathfrak m}_3({\mathtt x}_3)=0$. We show that for every $m\in (0,1)$ the equation  $\Theta_3(m,\ell)={\mathtt x}_3$ has a unique solution $\psi_{{\mathtt x}_3}(m)\in (\mathfrak{l}(m),+\infty)$. Indeed, it suffices to note that for every $m\in (0,1)$, the function $\mathtt{f}_m:\ell\to \Theta_3(m,\ell)$ is strictly decreasing and satisfies
$\lim \limits_{\ell\rightarrow
{\mathfrak l}(m)^+} \mathtt{f}_m(\ell)=1/4$, $\lim \limits_{\ell\rightarrow+\infty} \mathtt{f}_m(\ell)=0$. Consequently, $\mathcal{C}_{3}({\mathtt x}_3)$ is the graph of the function $\psi_{{\mathtt x}_3}:m\in (0,1)\to \psi_{{\mathtt x}_3}(m)$. Reasoning as in the previous step, one sees that $\psi_{{\mathtt x}_3}$ is differentiable and that its derivative is as in (\ref{PL7.F1}). If ${\mathtt x}_3\in (1/4,1/2\sqrt{3})$ we put $\mathfrak{m}_3({\mathtt x}_3)=\vartheta^{-1}({\mathtt x}_3)$. We prove that, if $(m_*,\ell_*)\in \mathcal{C}_{3}({\mathtt x}_3)$, then $m_*<\mathfrak{m}_3({\mathtt x}_3)$. By contradiction: suppose $m_*\ge \mathfrak{m}_3({\mathtt x}_3)$. Since $\Theta_3$ is strictly decreasing with respect to the variable $\ell$ and $\vartheta$ is strictly decreasing, then
$${\mathtt x}_3=\Theta_3(m_*,\ell_*)< \lim \limits_{\ell\rightarrow {\mathfrak l}(m)}\Theta_3(m_*,\ell)=
\vartheta(m_*)\le \vartheta(\mathfrak{m}_3({\mathtt x}_3))= {\mathtt x}_3.
$$
Finally, we prove that, for every $m\in (0,\mathfrak{m}_3({\mathtt x}_3))$, the equation $\Theta_3(m,\ell)={\mathtt x}_3$ has a unique solution $\psi_{{\mathtt x}_3}(m)\in (\mathfrak{l}(m),+\infty)$. In fact, $\Theta_3$ is strictly decreasing in the second variable, $\vartheta$ is strictly decreasing and
$$\lim \limits_{\ell\rightarrow {\mathfrak l}(m)^+}\Theta_3(m,\ell)=\vartheta(m)>\vartheta(\mathfrak{m}_3({\mathtt x}_3))={\mathtt x}_3,\quad 
\lim \limits_{\ell\rightarrow +\infty}\Theta_3(m,\ell)=0.$$
Then, $\mathcal{C}_{3}({\mathtt x}_3)$ is the graph of the function $\psi_{{\mathtt x}_3}:m\in (0,\mathfrak{m}_3({\mathtt x}_3))\to \psi_{{\mathtt x}_3}(m)$. Reasoning as in the previous cases one proves that $\psi_{{\mathtt x}_3}$ is differentiable and that its derivative is as in 
(\ref{PL7.F1}).

\noindent {\bf Conclusion}. We conclude the proof showing that $\mathcal{C}_{2}({\mathtt x}_2)$ and 
$\mathcal{C}_{3}({\mathtt x}_3)$ are either disjoint or else have a single point of intersection. 
If
$\mathcal{C}_{2}({\mathtt x}_2)\cap \mathcal{C}_{3}({\mathtt x}_3)\neq \emptyset$, then $({\mathfrak m}_2({\mathtt x}_2),1)\cap (0,{\mathfrak m}_3({\mathtt x}_3))$ is a non empty open interval ${\rm I}_{{\mathtt x}_2,{\mathtt x}_3}\subset (0,1)$ and $(m_*,\ell_*)\in \mathcal{C}_{2}({\mathtt x}_2)\cap \mathcal{C}_{3}({\mathtt x}_3)$ if and only if $m_*\in {\rm I}_{{\mathtt x}_2,{\mathtt x}_3}$ and
$(m_*,\ell_*) = (m_*,\varphi_{{\mathtt x}_2}(m_*))=(m_*,\psi_{{\mathtt x}_3}(m_*))$.
From (\ref{PL6.F1}) and (\ref{PL7.F1}) and keeping in mind that
$\partial_{\ell}\Theta_2>0$, $\partial_{\ell}\Theta_3<0$ and that
$\partial_m\Theta_2 \partial_{\ell}\Theta_3-\partial_{\ell}\Theta_2 \partial_m\Theta_3>0$,
we have
$$(\varphi_{{\mathtt x}_2}-\psi_{{\mathtt x}_3})'\big|_{m_*}=-
\frac{\partial_m\Theta_2 \partial_{\ell}\Theta_3-\partial_{\ell}\Theta_2 \partial_m\Theta_3}{\partial_{\ell}\Theta_2 \partial_{\ell}\Theta_3}\Big|_{(m_*,\ell_*)}>0.
$$
Then, $\varphi_{{\mathtt x}_2}-\psi_{{\mathtt x}_3}$ vanishes at $m_*$ and its derivative is strictly positive at its zeroes. So, $m_*$ is its only zero.
}\end{proof}

\begin{prop}\label{PIII}{The image of $\Theta$ coincides with the monodromic domain
}\end{prop}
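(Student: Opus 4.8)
The plan is to combine the infinitesimal information of Propositions \ref{PI} and \ref{PII} with a boundary analysis of $\Theta$. By Proposition \ref{PI} the map $\Theta$ is an orientation--preserving local diffeomorphism, and by Proposition \ref{PII} it is injective, so $\Theta$ is a diffeomorphism of $\mathcal D$ onto the open set $\Theta(\mathcal D)$; it remains to prove the identification $\Theta(\mathcal D)=\mathcal M$, which I would establish by two inclusions.

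\medskip
\noindent\emph{The inclusion $\Theta(\mathcal D)\subseteq\mathcal M$.} On $\mathcal D$ one has $4\kappa+3\lambda_1<0<4\kappa+3\lambda_2<4\kappa+3\lambda_3$ identically: each quantity has constant sign along the connected set $\mathcal D$ by (\ref{4S.2.F8}), and the sign is read off from the asymptotics as $\ell\to+\infty$. Hence $\Theta_1<0<\Theta_3<\Theta_2$, which in particular gives $\Theta_2-\Theta_3>0$. Moreover the quantity $\Theta_1+\Theta_2+\Theta_3$, shown to lie in $\Z$ at the end of the proof of Theorem \ref{closureconditions}, is continuous and integer--valued on $\mathcal D$, hence a fixed integer; letting $\ell\to+\infty$ (where $\Theta_1\to-1/2$, $\Theta_2\to1/2$, $\Theta_3\to0$) identifies it as $0$. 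Therefore $\Theta_1=-(\Theta_2+\Theta_3)$ and, by the elementary identity $\Theta_2^{2}+\Theta_2\Theta_3+\Theta_3^{2}=\tfrac12(\Theta_1^{2}+\Theta_2^{2}+\Theta_3^{2})$, the remaining two inequalities defining $\mathcal M$ become $\Theta_2+\Theta_3>1/2$ and $\Theta_1^{2}+\Theta_2^{2}+\Theta_3^{2}<1/2$. Each of these I would obtain by a monotonicity argument in $\ell$ of exactly the type used in Steps I and II of the proof of Proposition \ref{PII}: using the analogue of (\ref{PDT}) for $j=1$ together with (\ref{ODS}), (\ref{dEK}) and (\ref{f3}), one shows that for fixed $m$ the function $\ell\mapsto\Theta_2+\Theta_3$ is strictly decreasing and $\ell\mapsto\Theta_1^{2}+\Theta_2^{2}+\Theta_3^{2}$ strictly increasing; comparing with the boundary values on $\{\mathtt p(m,\ell)=0\}$, where $\Theta_2=\Theta_3=\vartheta(m)$ with $1/4<\vartheta(m)<1/2\sqrt{3}$ (Steps III and V of the proof of Proposition \ref{PII}), and with the limits $1/2$ and $1/2$ at $\ell\to+\infty$, yields the two inequalities.

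\medskip
\noindent\emph{The inclusion $\mathcal M\subseteq\Theta(\mathcal D)$.} As $\Theta(\mathcal D)$ is open and contained in the connected set $\mathcal M$, it is enough to show it is also closed in $\mathcal M$, i.e. that $\Theta:\mathcal D\to\mathcal M$ is proper. Suppose $(m_n,\ell_n)\in\mathcal D$ with $\Theta(m_n,\ell_n)\to q\in\mathcal M$ but with no subsequence of $(m_n,\ell_n)$ converging in $\mathcal D$; after passing to a subsequence, $(m_n,\ell_n)$ then approaches $\partial\mathcal D$ in the compactification obtained by adjoining the arc $\{\ell=+\infty\}$ and resolving the cusp of $\mathcal D$ at $m\to1$ (where $\mathfrak l(m)\to+\infty$) through the ratio $\lambda=\ell/\mathfrak l(m)\in[1,+\infty]$. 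I would then verify that along each of the four boundary strata $\Theta$ tends to $\partial\mathcal M$, contradicting $q\in\mathcal M$. Along $\ell\to\mathfrak l(m)$ one has $\Theta\to(\vartheta(m),\vartheta(m))$, which sweeps out the diagonal segment $\sigma_{1,2}$ (Steps III--V of the proof of Proposition \ref{PII}); along $\ell\to+\infty$ one has $\Theta\to(1/2,0)=P_3$ (Steps III--IV); along $m\to0^{+}$ the curvature $\kappa_{m,\ell}$ degenerates to the constant $\ell^{2}/2$, and evaluating (\ref{4S.2.F13.tris}) at $m=0$ produces an explicit function of $\ell$ which, via (\ref{rq}) and Theorem \ref{S3.6.P1}, matches the parameterization (\ref{DMN.F1}) of the elliptic arc $\sigma_{2,3}$; and along the cusp $m\to1^{-}$, using ${\rm sn}(\,\cdot\,,1)=\tanh$, ${\rm K}(m)\to+\infty$ and $\omega_{m,\ell}=2{\rm K}(m)/\ell$, an asymptotic evaluation of the period integrals (\ref{periodic}) with $\lambda$ held fixed produces a point of the segment $\sigma_{1,3}$ on the line $x+y=1/2$. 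Since $\partial\mathcal M=\sigma_{1,2}\cup\sigma_{1,3}\cup\sigma_{2,3}$, this is the desired contradiction, so $\Theta(\mathcal D)$ is closed in $\mathcal M$ and therefore equals $\mathcal M$.

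\medskip
\noindent The main obstacle is the cusp stratum $m\to1$: there the period $\omega_{m,\ell}$ and the complete elliptic integral $\Pi$ degenerate simultaneously, and one must control the eigenvalues $\lambda_j(m,\ell)$ along the scaling $\ell=\lambda\,\mathfrak l(m)$ — which is where the polynomial identity underlying the definition of $\mathfrak l$ enters — in order to show that the limiting locus is exactly the open segment $\sigma_{1,3}$, rather than a single point. A tidier alternative to the open--plus--closed argument is a degree computation: the boundary analysis shows $\Theta$ extends continuously to a map $\overline{\mathcal D}\to\overline{\mathcal M}$ taking $\partial\mathcal D$ onto $\partial\mathcal M$ with winding number $+1$ about each point of $\mathcal M$ (here $\det J(\Theta)>0$ fixes the orientation), so that $\deg(\Theta,\mathcal D,q)=1$ for $q\in\mathcal M$ and $\deg(\Theta,\mathcal D,q)=0$ for $q\notin\overline{\mathcal M}$; since $\Theta(\mathcal D)$ is open this gives $\Theta(\mathcal D)=\mathcal M$ in one stroke, re-deriving the first inclusion as well.
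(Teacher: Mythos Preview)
Your plan follows the same two–inclusion/properness architecture as the paper, and your compactification of $\mathcal D$ by the ratio $\ell/\mathfrak l(m)$ is exactly the paper's reparametrization $h=1-(\mathfrak l(m)/\ell)^{6}$ in disguise; the four boundary strata you list and their images $\sigma_{1,2},\sigma_{2,3},\sigma_{1,3},\{P_3\}$ match the paper's limits $\widehat\tau_0,\tau_0,\tau_1,\widehat\tau_1$ in its Conclusion. So the skeleton is right. Two points deserve attention.

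\emph{The elliptic inequality.} Your idea of exploiting the identity $\Theta_1+\Theta_2+\Theta_3=0$ to rewrite $x^2+xy+y^2<1/4$ as $\Theta_1^{2}+\Theta_2^{2}+\Theta_3^{2}<1/2$ is neat, but the claimed monotonicity of $\ell\mapsto\Theta_1^{2}+\Theta_2^{2}+\Theta_3^{2}$ is not a corollary of the signs established in Proposition~\ref{PII}. Writing $\Theta_1^{2}+\Theta_2^{2}+\Theta_3^{2}=\tfrac12\bigl(3\Theta_1^{2}+(\Theta_2-\Theta_3)^{2}\bigr)$ one sees that $\Theta_1^{2}$ is \emph{decreasing} in $\ell$ (since $\Theta_1<0$, $\partial_\ell\Theta_1=-\partial_\ell(\Theta_2+\Theta_3)>0$) while $(\Theta_2-\Theta_3)^{2}$ is increasing, so the sign of the sum's derivative is genuinely in doubt; a proof here would require new inequalities beyond those of Steps~I--II. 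The paper sidesteps this by a different device: after passing to the square $(m,h)\in(0,1)^2$ it proves that both $\widetilde\Theta_2$ and $\widetilde\Theta_3$ are strictly decreasing in $m$, so that if the image met the elliptic arc $\sigma_{2,3}$ (which is the $m\to0$ limit) at $\widetilde\Theta(m_*,h_*)=\beta(k_*)$ one would get the contradictory pair $h_*>k_*$ and $h_*<k_*$ from the opposite monotonicities of $\beta_2,\beta_3$. That argument is elementary once the $m$-derivatives are computed, and avoids any mixed quadratic expression.

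\emph{Uniform boundary control.} For the properness direction, merely knowing the pointwise limits along each stratum does not suffice: a sequence $(m_n,\ell_n)$ drifting to a \emph{corner} of the compactified square (say $m_n\to1$ and $h_n\to0$ simultaneously) need not have convergent image unless one controls how the stratum limits are approached. The paper's Steps~III--IV supply exactly this: integrable pointwise bounds $|\partial_m\widetilde\Theta_j|\le C|\log(1-m)|$ and $|\partial_h\widetilde\Theta_j|\le C'/\sqrt{h}$ (resp.\ $C''/\sqrt{1-h}$), so that one can integrate along axis-parallel segments and conclude $\widetilde\Theta(m_n,h_n)\to\partial\mathcal M$ in every case. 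Your degree-theoretic variant needs the same input, since the continuous extension of $\Theta$ to $\overline{\mathcal Q}$ is precisely what these estimates deliver. You flag the cusp $m\to1$ as the hard stratum; the concrete content there is that $|\partial_m\widetilde\Theta_j|$ blows up only logarithmically, which is integrable, and this is where the paper spends its effort.
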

\begin{proof}{The proof is subdivided into four intermediate steps and a conclusion.

\noindent {\bf Step I}. The image of the period  map is contained in the polygonal region (see Figure ~\ref{FIG4})
$$\mathcal{R}=\{(x,y)\in \R^2: x-y>0,x+y>1/2, 1/4<x<1/2,0<y<1/2\sqrt{3}\}.
$$
The inequalities $1/4<\Theta_2<1/2$ and $0<\Theta_3<1/2\sqrt{3}$ have been verified in the proof of the previous proposition. For every $m\in (0,1)$, the function $\ell\to \Theta_3(m,\ell)$ is strictly decreasing and $\ell \to \Theta_2(m,\ell)$ is strictly increasing. Thus 
$$\ell\in ({\mathfrak l}(m),+\infty)\to \Theta_2(m,\ell)-\Theta_3(m,\ell)\in \R$$ 
is strictly increasing, for every $m\in (0,1)$.  Since
$$\lim \limits_{\ell\rightarrow {\mathfrak l}(m)^+} \Theta_2(m,\ell) = \lim \limits_{\ell\rightarrow {\mathfrak l}(m)^+} \Theta_3(m,\ell) =\vartheta(m),\ \forall m\in (0,1)$$
then, $\Theta_2-\Theta_3$ is strictly positive on ${\mathcal D}$. From (\ref{PDT}) it follows that $\partial_{\ell}(\Theta_2+\Theta_3)$ is strictly negative on ${\mathcal D}$. Then,
$\ell\in (\mathfrak{l}(m),+\infty)\to \Theta_2(m,\ell)+\Theta_3(m,\ell)\in \R$
is strictly decreasing, for every $m\in (0,1)$. In the proof of Proposition \ref{PII} we showed that
$\lim \limits_{\ell\rightarrow +\infty}\Theta_2(m,\ell)=1/2$ and that $\lim \limits_{\ell\rightarrow +\infty}\Theta_3(m,\ell)=0$, for every $m\in (0,1)$. Hence
$$\Theta_2(m,\ell)+\Theta_3(m,\ell) > \lim \limits_{\ell\rightarrow +\infty}(\Theta_2(m,\ell)+\Theta_3(m,\ell))=1/2.
$$

\noindent {\bf Step II}. We prove that $\Theta({\mathcal D})\subseteq {\mathcal M}$. The arc $\sigma_{2,3}$ (see Remark \ref{arc} and Figure \ref{FIG4}) divides the interior of $\mathcal{R}$ into two disjoint connected sub-domains: $\mathcal{M}$ and the region above $\sigma_{2,3}$. Since 
$\Theta(\mathcal{D})\cap \mathcal{M}\neq \emptyset$, it suffices to check that $\Theta(\mathcal{D})\cap \sigma_{2,3}=\emptyset$. To this end, we consider the reparametrization of $\mathcal{D}$ defined by
\begin{equation}\label{Frep}
{\rm F}:(m,h)\in {\mathcal Q}:=(0,1)\times (0,1)\to (m, (1-h)^{-1/6}\mathfrak{l}(m))\in 
\mathcal{D}.\end{equation}
Let $\widetilde{\Theta}$ be the {\it modified period map}, defined by $\widetilde{\Theta}=\Theta\circ {\rm F}$ (see Figure \ref{FIG4}). From (\ref{PDT}) and (\ref{Frep}) we obtain
\begin{equation}\label{FP.F1}
\partial_m\widetilde{\Theta}_2|_{(m,h)}
=\frac{\sqrt{1-h}{\mathtt a}(m)}{{\mathtt r}_2(m,h){\mathtt b}(m)},\quad 
\partial_m\widetilde{\Theta}_3|_{(m,h)}
=-\frac{\sqrt{1-h}{\mathtt a}(m){\mathtt r}_3(m,h)}{{\mathtt b}(m)}
\end{equation}
where
\[\begin{cases}
{\mathtt b}(m)=2\pi m(m-1)(1 - m + m^2) \sqrt{3 (m-2) (1 + m) (2 m-1) + 6 (1 + (m-1) m)^{3/2}}\\ 
{\mathtt a}(m) =\alpha_{{\rm E}}(m){\rm E}(m)+\alpha_{{\rm K}}(m){\rm K}(m),\\
\alpha_{{\rm E}}(m) =-2 (1 + \sqrt{1 + (m-1) m}) + 
m (4 + 3 \sqrt{1 + (m-1) m}) +\\
\qquad\qquad\hspace{3pt} +3m^2 ( \sqrt{1 + (m-1) m}-2) -2 m^3 (m-2 + \sqrt{1 + (m-1) m}),\\
\alpha_{{\rm K}}(m) =(1 - m) \left(2(1 + \sqrt{1 + (m - 1) m})-m (3 + 2 \sqrt{1 + (m-1) m})\right)+\\
\qquad\qquad\hspace{3pt} + m ^2 (1 - m) (3 - m - \sqrt{1 + (m - 1) m}),
\end{cases}
\]
and
\[\begin{cases}
{\mathtt r}_2(m,h)= 1 + 2\sin(\frac{1}{3}\arcsin({\mathtt q}_2(m,h))),\\
{\mathtt r}_3(m,h)=\frac{1-\sqrt{3}\cos\left(\frac{1}{3}\arcsin({\mathtt q}_3(m,h))\right)+\sin\left(\frac{1}{3}\arcsin({\mathtt q}_3(m,h))\right)}{1+\cos\left(\frac{2}{3}\arcsin({\mathtt q}_3(m,h))\right)-\sqrt{3}\sin\left(\frac{2}{3}\arcsin({\mathtt q}_3(m,h))\right)},\\
{\mathtt q}_2(m,h) = 1-h\frac{2 m^3 + 2 (1 + \sqrt{1 - m + m^2}) + 
    m^2 (2 \sqrt{1 - m + m^2}-3)  - m (3 + 2 \sqrt{1 - m + m^2})}{2(1 - m + m^2)^{3/2}},\\
    {\mathtt q}_3(m,h) =  \frac{-2 h + 3 h m + 3 h m^2 - 2 h m^3 
    +2 (1 + (-1 + m) m)^{3/2} - 2 h (1 + (-1 + m) m)^{3/2}}{2(1 - m + m^2)^{3/2}}.
\end{cases}
\]
Observing that ${\mathtt a}$ and ${\mathtt r}_2$ are strictly positive and ${\mathtt b}$ and ${\mathtt r}_3$ are strictly negative, one sees $\partial_m \widetilde{\Theta}_2$ and $\partial_m \widetilde{\Theta}_3$ are strictly negative on ${\mathcal Q}$. Thus, $\widetilde{\Theta}_2$ and $\widetilde{\Theta}_3$ are strictly decreasing functions with respect to the variable $m$.
On the other hand, the arc $\sigma_{2,3}$ is parameterized by
$$\beta : h\in (0,1)\to \lim \limits_{m\rightarrow 0} \widetilde{\Theta}(m,h),$$ whose components $(\beta_2,\beta_3)$ are respectively given by $x(h)$ and $y(h)$ as defined in (\ref{DMN.F1}).
Note that $\beta_2$ is stricly increasing and that $\beta_3$ is strictly decreasing. By contradiction, suppose that ${\rm Im}(\Theta)\cap \sigma_{2,3}\neq \emptyset$. Then, there exist $(m_*,h_*)\in {\mathcal Q}$ and $k_*\in (0,1)$ such that $\widetilde{\Theta}(m_*,h_*)=\beta(k_*)$. Consequently, we have
\[\begin{cases}\beta_2(h_*)=\lim \limits_{m\rightarrow 0} \widetilde{\Theta}_2(m,h_*)>\widetilde{\Theta}_2(m_*,h_*)=\beta_2(k_*)=\lim \limits_{m\rightarrow 0} \widetilde{\Theta}_2(m,k_*),\\
\beta_3(h_*)=\lim \limits_{m\rightarrow 0} \widetilde{\Theta}_3(m,h_*)>\widetilde{\Theta}_3(m_*,h_*)=\beta_3(k_*)=\lim \limits_{m\rightarrow 0} \widetilde{\Theta}_3(m,k_*).
\end{cases}\]
Since $\beta_2$ is strictly increasing and $\beta_3$ is strictly decreasing, we get $h_*>k_*$ and $k_*<h_*$. We have thus found a contradiction. 

\noindent {\bf Step III}. Note that
$$\left|\frac{{\mathtt a}(m)}{{\mathtt b}(m)}\right|\le {\rm C}_1\left|\log(1-m)\right|,\quad
0\le \frac{\sqrt{1-h}}{{\mathtt r}_2(m,h)}\le \frac{\sqrt{3}}{2},\quad
0\le -\sqrt{1-h}\ {\mathtt r}_3(m,h)\le 1/2,
$$
where ${\rm C}_1$ is a positive constant and $(m,h)\in [0,1)\times [0,1]$. From these bounds and using (\ref{FP.F1}) we obtain \begin{equation}\label{FP.F6}
\left|\partial_m\widetilde{\Theta}_2|_{(m,h)}\right|< \widetilde{{\rm C}}_1\left|\log(1-m)\right|,\quad
\left|\partial_m\widetilde{\Theta}_3|_{(m,h)}\right|< \widetilde{{\rm C}}_1\left|\log(1-m)\right|,
\end{equation}
for every $(m,h)\in [0,1)\times [0,1]$ and some positive constant $\widetilde{{\rm C}}_1$.

\noindent {\bf Step IV}. Now prove that
\begin{equation}\label{FP.F8}
\begin{split}
&\left |\partial_h\widetilde{\Theta}_j|_{(m,h)}\right|\le  \frac{\widetilde{{\rm C}}_2}{(1-m)^2\sqrt{h}},\hspace{30pt}  j=2,3,\quad 
\forall (m,h)\in [0,1)\times(0,1/2),\\
&\left|\partial_h\widetilde{\Theta}_j|_{(m,h)}\right|\le  \frac{\widetilde{{\rm C}}_2}{m(1-m)\sqrt{1-h}},\quad  j=2,3,\quad 
\forall (m,h)\in (0,1)\times(1/2,1),\end{split}
\end{equation}
for some positive constant $\widetilde{{\rm C}}_2$ .
From (\ref{PDT}) and (\ref{Frep}) we obtain
\begin{equation}\label{FP.F7}
\partial_h\widetilde{\Theta}_2|_{(m,h)}
=\frac{{\mathtt g}(m){\mathtt s}_2(m,h){\mathtt t}_2(m,h)}{\pi\sqrt{3(1-h)}},\hspace{2pt}
\partial_h\widetilde{\Theta}_3|_{(m,h)}
=\frac{{\mathtt g}(m){\mathtt s}_3(m,h){\mathtt t}_3(m,h)}{\pi\sqrt{3(1-h)}}
\end{equation}
where
\[\begin{cases}
&{\mathtt g}(m)=\frac{\sqrt{(m-2) (1 + m) (2 m-1) + 2 (1 + (m-1) m)^{3/2}}}{6 (1 - m + m^2)},\\
&{\mathtt s}_2(m,h)=3{\rm E}(m)+ \left(m - 2 + 2 \sqrt{1 - m + m^2} \sin\big(\frac{\arcsin({\mathtt q}_2(m,h))}{3}\big)\right){\rm K}(m),\\
&{\mathtt s}_3(m,h)=3{\rm E}(m)+\Big(m - 2 + 2 \sqrt{1 - m + m^2}\Big(\sqrt{3}\cos(\frac{\arcsin({\mathtt q}_2(m,h))}{3})-\\&\hspace{50pt} -\sin(\frac{\arcsin({\mathtt q}_2(m,h))}{3})\Big)\Big){\rm K}(m),\\
&{\mathtt t}_2(m,h)=\frac{-1}{1-2\cos\left(\frac{2\arcsin({\mathtt q}_2(m,h))}{3}\right)},\\
&{\mathtt t}_3(m,h)=\frac{-1}{1+\cos\left(\frac{2\arcsin({\mathtt q}_2(m,h))}{3}\right)-\sqrt{3}\sin\left(\frac{2\arcsin({\mathtt q}_2(m,h))}{3}\right)}
\end{cases}
\]
The function ${\mathtt g}$ is positive on $[0,1)$ and bounded above by a positive constant ${\rm C}$ on $[0,1]$. 
The function  ${\mathtt t}_2$ satisfies
\begin{equation}\label{FP.F10}
\left|{\mathtt t}_2(m,h)\right| (1-m^2)\sqrt{h}\le \sqrt{1-h},
\end{equation}
for every $(m,h)\in [0,1)\times(0,1/2)$. Similarly, ${\mathtt s}_3$ and ${\mathtt t}_3$ satisfy
\begin{equation}\label{FP.F11}
0<{\rm C}'\le 3 {\rm E}(m) + \big(m-2 + \sqrt{1 - m + m^2}\big) {\rm K}(m)\le {\mathtt s}_3(m,h) \le 3{\rm E}(m)\le \frac{3\pi}{2}\end{equation}
for every $(m,h)\in [0,1]\times [0,1]$, and
\begin{equation}\label{FP.F12}
-\sqrt{1-h}\le 2{\mathtt t}_3(m,h)(1-m^2)\sqrt{h}\le 0,
\end{equation}
for every $(m,h)\in [0,1)\times (0,1/2)$. Since ${\mathtt s}_2$ is non-negative and bounded above, the bounds in the first line of (\ref{FP.F8}) follow from (\ref{FP.F10})-(\ref{FP.F12}). The functions ${\mathtt t}_2$ and ${\mathtt t}_3$
satify $$0\le  3\sqrt{3}\ {\mathtt t}_2(m,h)m(1-m)\le \sqrt{2}$$ and $$-\sqrt{2}/3\le {\mathtt t}_3(m,h)(1-m)m\le 0,$$
for every $(m,h)\in (0,1)\times (1/2,1)$. Then,
using  (\ref{FP.F11}) and recalling that ${\mathtt g}$ and ${\mathtt s}_2$ are non-negative and bounded above, we see that $\partial_h\widetilde{\Theta}_j$, $j=2,3$, fulfill the bounds in the second line of (\ref{FP.F8}).

\noindent {\bf Conclusion}. We are now in a position to prove that $\widetilde{\Theta}({\mathcal D})=\mathcal{M}$. Preliminarily, we observe that
\[\begin{cases}
\tau_{0,2}(h):=\lim \limits_{m\rightarrow 0^+} \widetilde{\Theta}_2(m,h)=\frac{1-h}{\sqrt{3}\left(1 + 2 \sin(\arcsin(1-2h)/3)\right)},\\
\tau_{0,3}(h):=\lim \limits_{m\rightarrow 0^+} \widetilde{\Theta}_3(m,h)=\frac{1-h}{\sqrt{3} (1 + \sqrt{3}\cos(\arcsin(1-2h)/3) - \sin(\arcsin(1-2h)/3))},\\
\tau_{1,2}(h):=\lim \limits_{m\rightarrow 1^-} \widetilde{\Theta}_2(m,h)=\frac{3}{8}-\frac{1}{4\pi}\arcsin(1-2h),
\\
\tau_{1,3}(h):=\lim \limits_{m\rightarrow 1^-} \widetilde{\Theta}_3(m,h)=\frac{1}{8}+\frac{1}{4\pi}\arcsin(1-2h),\\
\widehat{\tau}_{0,2}(m):=\lim \limits_{h\rightarrow 0^+} \widetilde{\Theta}_2(m,h)=\vartheta(m),\\
\widehat{\tau}_{0,3}(m):=\lim \limits_{h\rightarrow 0^+} \widetilde{\Theta}_3(m,h)=\vartheta(m),\\
\widehat{\tau}_{1,2}(m):=\lim \limits_{h\rightarrow 1^-} \widetilde{\Theta}_2(m,h)=1/2,\\
\widehat{\tau}_{1,3}(m):=\lim \limits_{h\rightarrow 1^-} \widetilde{\Theta}_3(m,h)=0,
\end{cases}
\]
Then,

\noindent $\bullet$ $\tau_0:h\in [0,1]\to (\tau_{0,2}(h),\tau_{0,3}(h))$ is a parametrization of $\sigma_{2,3}$,\\
\noindent $\bullet$ $\tau_1:h\in [0,1]\to (\tau_{1,2}(h),\tau_{1,3}(h))$ is a parametrization of the segment $\sigma_{1,3}$,\\
\noindent $\bullet$  $\widehat{\tau}_0:h\in [0,1]\to (\widehat{\tau}_{0,2}(h),\widehat{\tau}_{0,3}(h))$ is a parametrization of the segment $\sigma_{1,2}$,\\
where $\sigma_{2,3}$, $\sigma_{1,3}$ and $\sigma_{1,2}$ are defined as in Remark \ref{arc} (see also Figure \ref{FIG4}).

\noindent By contradiction, suppose that ${\rm Im}(\widetilde{\Theta})$ is properly contained in ${\mathcal M}$. Then, there exist ${\mathtt q}\in \mathcal{M}$ such that  ${\mathtt q}\in \partial\big({\rm Im}(\widetilde{\Theta})\big)$. Let $\{{\mathtt q}_n\}_{n\in \mathbb{N}}\subset {\rm Im}(\widetilde{\Theta})$ be a sequence converging to ${\mathtt q}$. For each $n\in {\mathbb N}$, we choose ${\mathtt d}_n=(m_n,h_n)\in {\mathcal Q}$ such that $\widetilde{\Theta}({\mathtt d}_n)= {\mathtt q}_n$. Without loss of generality, $\{{\mathtt d}_n\}_{n\in \mathbb{N}}$ converges to ${\mathtt d}_*=(m_*,h_*)\in \overline{{\mathcal Q}}$.  Since ${\mathtt q}\notin {\rm Im}(\Theta)$, the point
${\mathtt d}_*$ belongs to $\partial{\mathcal Q}$. There are four possible cases: $m_*=1$ and $h\in [0,1]$, or $m_*=0$ and $h\in [0,1]$, or  $h_*=0$ and $m_*\neq 0,1$ or else $h_*=1$ and $m_*\neq 0,1$.

\noindent {\it Case $1$}: $m_*=1$ and $h\in [0,1]$. From (\ref{FP.F6}) we have
$$\sqrt{\left(\partial_m\widetilde{\Theta}_2|_{(m,h)}\right)^2+\left(\partial_m\widetilde{\Theta}_3|_{(m,h)}\right)^2}<{\rm C}|\log(1-m)|,$$
for some positive constant ${\rm C}$. This implies
\begin{multline}{\rm d}\Big(\widetilde{\Theta}({\mathtt d}_n),\tau_1(h_n)\Big)\le  \int_{m_n}^{1}
\sqrt{\left(\partial_m\widetilde{\Theta}_2|_{(m,h)}\right)^2+\left(\partial_m\widetilde{\Theta}_3|_{(m,h)}\right)^2}dm\le \\
 \le {\rm C}\int_{m_n}^{1} \left|\log(1-m)\right| dm = {\rm C}\big(1  - m_n-\log(1-m_n)+m_n\log(1-m_n)\big).\nonumber
\end{multline}
Hence,
\[ \lim \limits_{n\rightarrow \infty} {\rm d}\Big(\widetilde{\Theta}({\mathtt d}_n),\tau_1(h_*)\Big)\le
 \lim \limits_{n\rightarrow \infty}{\rm d}\Big(\widetilde{\Theta}({\mathtt d}_n),\tau_1(h_n)\Big)+
 \lim \limits_{n\rightarrow \infty}{\rm d}\Big(\tau_1(h_n),\tau_1(h_*)\Big)=0.\]
Thus, ${\mathtt q}=\tau_1(h_*)\in \partial {\mathcal M}$, contradicting the hypothesis that ${\mathtt q}\in {\mathcal M}$.

\noindent {\it Case $2$}: $m_*=0$ and $h\in [0,1]$. We assume $m_n<1/2$, for every $n$. From (\ref{FP.F6}) we infer that
$\big(\partial_m\widetilde{\Theta}_2|_{(m,h)}\big)^2+\big(\partial_m\widetilde{\Theta}_3|_{(m,h)}\big)^2$ is bounded on $(0,1/2)\times (0,1)$. Reasoning as above, we obtain $\lim \limits_{n\rightarrow \infty} {\rm d}\big(\widetilde{\Theta}({\mathtt d}_n),\tau_1(h_*)\big)=0$. Hence, ${\mathtt q}=\tau_1(h_*)\in \partial {\mathcal M}$. So, even in this case, we have come to a contradiction.

\noindent {\it Case $3$}: $m_*\neq 0,1$ $h_*=0$. We assume $h_n<1/2$ and $m_n<m_{**}<1$, for every $n$. Then, (\ref{FP.F8}) implies that
$\big(\partial_h\widetilde{\Theta}_2|_{(m,h)}\big)^2+\big(\partial_h\widetilde{\Theta}_3|_{(m,h)}\big)^2$ is bounded from above by ${\rm C}^2/h$ on $(0,m_{**})\times (0,1/2)$, for some positive constant ${\rm C}$. Hence,
\[{\rm d}\Big(\widetilde{\Theta}({\mathtt d}_n),\widehat{\tau}_0(m_*)\Big)\le \int_0^{h_n} \sqrt{\left(\partial_h\widetilde{\Theta}_2|_{(m,h)}\right)^2+\left(\partial_h\widetilde{\Theta}_3|_{(m,h)}\right)^2} dh\le 2{\rm C}\sqrt{h_n}.
\]
Reasoning as above, we deduce that ${\mathtt q}=\widehat{\tau}_0(m_*)\in \partial {\mathcal M}$. So even in this third case we have reached a contradiction.

\noindent {\it Case $4$}: $h_*=1$ and $0<m_*< 1$. We may assume $h_n\in (1/2,1)$ and
$0<m_{**}<m_n<m_{***}<1$. From (\ref{FP.F8}) we infer that
$\big(\partial_h\widetilde{\Theta}_2|_{(m,h)}\big)^2+\big(\partial_h\widetilde{\Theta}_3|_{(m,h)}\big)^2$ is bounded from above by ${\rm C}^2/(1-h)$ on $[m_{**},m_{***}]\times (1/2,1)$, for some positive constant ${\rm C}$. Proceeding as before, this implies that  ${\rm d}\big(\widetilde{\Theta}({\mathtt d}_n), \widehat{\tau}_1(m_*)\big)$ tends to $0$ as $n\to \infty$. Then,
${\mathtt q}=\widehat{\tau}_1(m_*)\in \partial {\mathcal M}$. So even in the last case we reached a contradiction.}\end{proof}

\section{Quantization}\label{6}
\subsection{Characteristic numbers}\label{S6.1}
 
\begin{defn}{Let ${\mathcal D}_*=\{(x,y)\in {\mathcal D}\hspace{1pt} : \hspace{1pt} \Theta(x,y)\in {\mathbb Q}^2\}$ and $\gamma$ be the canonical parameterization of a closed string  with characters $(m,\ell)\in {\mathcal D}_*$. 
We  call $(q_2,q_3)=\Theta(m,\ell)$ the {\it modulus} of $\gamma$. The positive integers ${\mathtt h}_j,{\mathtt k}_j$, $j=1,2$, such that ${\mathtt h}_1/{\mathtt k}_1=2q_2+q_3$, ${\mathtt h}_2/{\mathtt k}_2=q_3-q_2$ and  that ${\rm gcd}({\mathtt h}_1,{\mathtt k}1)={\rm gcd}({\mathtt h}_2,{\mathtt k}_2)=1$
are said the {\it characteristic numbers} of $\gamma$. The integer ${\mathtt n}={\rm lcm}({\mathtt k}_1,{\mathtt k}_2)$ is the {\it wave number} of $\gamma$. A {\it symmetry} of $\gamma$ is an element $[{\bf A}] \in \widehat{{\rm G}}$, such that $[{\bf A}] \cdot |[\gamma]| = |[\gamma]|$. The set of all symmetries of $\gamma$ is a subgroup $\widehat{{\rm G}}_{\gamma}$ of $\widehat{{\rm G}}$.}\end{defn}

\begin{remark}{The cr-curvature $\kappa_{m,\ell}$ of $\gamma$ is the periodic function with least period $\omega_{m,\ell}=2{\rm K}(m)/\ell$ defined in (\ref{4S.2.F3}). From (\ref{4S.2.F13}) and (\ref{periodic}) it follows that $\gamma$ is periodic, with least period $\mathtt{n}\omega_{m,\ell}$. Its trajectory decomposes as the disjoint union of ${\mathtt n}$-fundamental arcs $|[\gamma_n]|=\gamma([(n-1)\omega_{m,\ell}, n\omega_{m,\ell}))$, $n=1,...,{\mathtt n}$, referred to as the {\it indecomposable waves}. The indecomposable waves are congruent each other and their total strain is $\omega_{m,\ell}$. We may think of $\omega_{m,\ell}$ to as the {\it wavelength} of $\gamma$.
The total strain of $\gamma$ is ${\mathtt n}\omega_{m,\ell}$.}\end{remark}

 
\begin{defn}{The stabilizer of the momentum ${\mathfrak m}_{\gamma}$ of $\gamma$ is a maximal compact Abelian subgroup ${\rm T}^2_{\gamma}\subset  \widehat{{\rm G}}$. The singular orbits $\mathcal{O}^1_\gamma$ and $\mathcal{O}^2_\gamma$ of the action of ${\rm T}^2_{\gamma}$ on ${\mathcal S}$ are said the {\it axes of symmetry} of $\gamma$. Let ${\bf B}_{\gamma}$ is a Wilczysnki frame, then ${\mathcal R}_{\gamma}=[{\bf B}_{\gamma}(\omega_{m,\ell})\cdot {\bf B}_{\gamma}(0)^{-1}]\in {\rm T}^2_{\gamma}$ is called {\it monodromy} of $\gamma$. 
}\end{defn}

 

\begin{defn}{A closed string $\gamma$ is said in a {\it symmetrical configuration} if ${\rm T}^2_{\gamma}={\rm T}^2$, where  
${\rm T}^2$ is the maximal torus defined in (\ref{torus}). Every closed string is congruent to a symmetrical configuration. The axes of symmetry of a symmetrical configuration are the chains ${\mathcal O}_1$ and ${\mathcal O}_2$ considered in Definition \ref{AS}. If $\gamma$ is a symmetrical configuration and if ${\bf L}\in {\rm G}$ is as in
(\ref{dL}), then, $\gamma^{\sharp}={\bf L}\cdot \gamma$ is another symmetrical configuration, the {\it dual of} 
$\gamma$.}\end{defn}

\subsection{The proof of Theorem C}\label{S6.2} We now prove the third main result of the paper.
\vspace{0.5pt}

\noindent {\bf Theorem C.}{\it \hspace{1pt} Let $\gamma$ be a closed string with characteristic numbers $({\mathtt h}_1,{\mathtt k}_1,{\mathtt h}_2,{\mathtt k}_2)$. Then, $\widehat{{\rm G}}_{\gamma}$ is a non-trivial subgroup of order ${\mathtt n}$ contained in a unique maximal torus ${\rm T}^2_{\gamma}$ and, in addition, $|[\gamma]|$ doesn't intersect its axes of symmetry and the integers ${\mathtt l}_1={\mathtt n}{\mathtt h}_2/{\mathtt k}_2$, ${\mathtt l}_2=-{\mathtt n}{\mathtt h}_1/{\mathtt k}_1$ are the linking numbers of $\gamma$ with the symmetry axes.}
\begin{proof}{The proof is organized into in five parts.

\noindent {\bf Part I.}   We build, for every $(m,\ell)\in {\mathcal D}_*$, a natural parameterization $\gamma_{m,\ell}$ of a closed string with characters $(m,\ell)$. Our construction is based on what has been shown in the subsection \ref{4S.2} (particularly in the proof of Theorem \ref{closureconditions}), therefore we are going to adopt a notation consistent with that one already used.\\Denote by $\Phi_j$, $j=1,2,3$, the {\it angular functions}
\begin{equation}\label{angffun}\Phi_j(s|m,\ell)= \int_0^s \frac{-6du}{4\kappa_{m,\ell}(u)+3\lambda_j(m,\ell)}=\frac{-6\Pi\left(\frac{6m\ell^2}{2(1+m)\ell^2+3\lambda_j(m,\ell)},{\rm am}(\ell s,m),m \right)}{\ell \left(2 (1 + m) \ell^2 + 3 \lambda_j(m,\ell)\right)}.
\end{equation}
We put
\[\begin{cases}
r_1(m,\ell)&=\frac{\sqrt{6}}{\sqrt{(\lambda_3(m,\ell)-\lambda_1(m,\ell))(\lambda_3(m,\ell)-\lambda_2(m,\ell))}}\\
r_2(m,\ell)&=\frac{\sqrt{6}}{\sqrt{(\lambda_3(m,\ell)-\lambda_2(m,\ell))(\lambda_2(m,\ell)-\lambda_1(m,\ell))}},\\
r_3(m,\ell)&=\frac{\sqrt{6}}{\sqrt{(\lambda_3(m,\ell)-\lambda_1(m,\ell))(\lambda_2(m,\ell)-\lambda_1(m,\ell))}}
\end{cases}
\]
and we define
\begin{equation}\label{cp}
\begin{cases}
z_1(s|m,\ell)=r_1(m,\ell)\sqrt{\lambda_2(m,\ell)-\lambda_1(m,\ell)}\sqrt{4\kappa_{m,\ell}(s)+3\lambda_3(m,\ell)}\ e^{-i\Phi_3(s|m,\ell)}\\
z_2(s|m,\ell)=r_2(m,\ell)\sqrt{4\kappa_{m,\ell}(s)+3\lambda_2(m,\ell)}\ e^{-i\Phi_2(s|m,\ell)},\\
z_3(s|m,\ell)=r_3(m,\ell)\sqrt{\lambda_3(m,\ell)-\lambda_2(m,\ell)}\sqrt{4\kappa_{m,\ell}(s)+3\lambda_1(m,\ell)}\ e^{-i\Phi_1(s|m,\ell)}
\end{cases}
\end{equation}
Let ${\mathfrak U}\in {\rm GL}(3,\C)$ be as in (\ref{U}) and $\gamma_{m,\ell}:\R\to {\mathcal S}$ be defined by
$$\gamma_{m,\ell}:s\to [{\mathfrak U}\cdot {}^t\left(z_1(s|m,\ell),z_2(s|m,\ell),z_3(s|m,\ell)\right)].$$
We prove that $\gamma_{m,\ell}$ is a natural parameterization of a closed string with characters $(m,\ell)$.
To this end we consider any natural parameterization $\gamma$ of a closed string with characters $(m,\ell)$ and Wilczynski frame ${\bf B}$. Let ${\rm Z}_j:\R\to \C^{2,1}-\{0\}$, $j=1,2,3$, be defined by
\begin{equation}\label{F4.P1.S6.2} 
{\rm Z}_1=r_3(m,\ell){\rm W}_{\lambda_{3}(m,\ell)},\quad
{\rm Z}_2=r_2(m,\ell){\rm W}_{\lambda_{2}(m,\ell)},\quad
{\rm Z}_3=r_1(m,\ell){\rm W}_{\lambda_{1}(m,\ell)},
\end{equation}
where ${\rm W}_{\lambda_{j}(m,\ell)}$, $j=1,2,3$, are as in (\ref{4S.2.F9}).
From the proof of Theorem \ref{closureconditions} it follows that ${\bf B}\cdot {\rm Z}_{1}$, ${\bf B}\cdot {\rm Z}_{2}$ and ${\bf B}\cdot {\rm Z}_{3}$ are constant eigenvectors of the momentum, paired with the eigenvalues $\lambda_3(m,\ell)$, $\lambda_2(m,\ell)$ and $\lambda_1(m,\ell)$ respectively. It is a computational matter to check that $\langle {\rm Z}_{i},{\rm Z}_{j}\rangle = \varepsilon_i\delta_{ij}$, $\varepsilon_1=\varepsilon_2=1$, $\varepsilon_3=-1$ and that $\Omega({\rm Z}_{1},{\rm Z}_{2},{\rm Z}_{3})=1$. Then, ${\bf Z}|_s=({\rm Z}_1(s),{\rm Z}_2(s),{\rm Z}_3(s))$ is a unimodular, pseudo-unitary basis of $\C^{2,1}$ and
${\bf B}\cdot {\bf Z}={\mathfrak C}$, where ${\mathfrak C}$ is a constant unimodular, pseudo-unitary basis of $\C^{2,1}$. Let ${\bf M}$ be the unique element of ${\rm G}$ such that ${\bf M}\cdot {\mathfrak C} = {\mathfrak U}$. By construction, the first column vector of ${\mathfrak U}\cdot {\bf Z}^{-1}$ is a normalized lift of $\gamma_{m,\ell}$ and ${\bf B}_{m,\ell}={\mathfrak U}\cdot {\bf Z}^{-1}$ is a Wilczynski frame along $\gamma_{m,\ell}$. Since ${\bf B}_{m,\ell}={\bf M}\cdot {\bf B}$, then $\gamma_{m,\ell}$ and $\gamma$ are congruent with each other. This shows that $\gamma_{m,\ell}$ is a natural parameteriziation of a closed string with characters $(m,\ell)$.

\noindent {\bf Part II.} We prove that $\gamma_{m,\ell}$ is a symmetrical configuration, the {\it standard symmetrical configuration}  with characters $(m,\ell)$. By construction, 
\begin{equation}\label{Z}{\bf Z}^{-1}|_s=(e^{-i\Phi_3(s|m,\ell)}{\bf E}_1^1+e^{-i\Phi_2(s|m,\ell)}{\bf E}_2^2+e^{i(\Phi_2(s|m,\ell)+\Phi_3(s|m,\ell))}{\bf E}_3^3)\cdot {\bf P}|_s,\end{equation}
where ${\bf P}$ is periodic, with least period $\omega_{m,\ell}$. Then, (\ref{Z}) and (\ref{angffun}) imply that the monodromy ${\mathcal R}_{m,\ell}$ of $\gamma_{m,\ell}$ is given by
\begin{equation}\label{F5.P1.S6.2}{\mathcal R}_{m,\ell}=[{\mathfrak U}\cdot (e^{2i\pi q_3}{\bf E}^1_1+e^{2\pi i q_2}{\bf E}^2_2+e^{-2\pi i (q_3+q_2)}{\bf E}^3_3)\cdot {\mathfrak U}^{-1}],
\end{equation}
where $(q_2,q_3)$ is the modus of $\gamma_{m.\ell}$. Hence, ${\mathcal R}_{m,\ell}\in {\rm T}^2$.\\To conclude the reasoning we show that ${\rm T}^2$ is the stabilizer of the momentum.  From (\ref{F5.P1.S6.2}) we have 
\begin{equation}\label{fact}\begin{cases}{\mathcal R}_{m,\ell}={\mathcal R}'_{m,\ell}{\mathcal R}''_{m,\ell},\\
{\mathcal R}'_{m,\ell}=[e^{-\frac{\pi i{\mathtt h}_1}{3{\mathtt k}_1}}\big(\cos(\pi\frac{•{\mathtt h}_1}{{\mathtt k}_1})({\bf E}^1_1+{\bf E}^3_3)+\sin(\pi\frac{•{\mathtt h}_1}{{\mathtt k}_1})({\bf E}^3_1-{\bf E}^1_3)\big)+
e^{\frac{2\pi i{\mathtt h}_1}{3{\mathtt k}_1}}{\bf E}^2_2],\\{\mathcal R}''_{m,\ell}=[e^{-\frac{2\pi i{\mathtt h}_2}{3{\mathtt k}_2}}({\bf E}^1_1+{\bf E}^3_3)+e^{\frac{4\pi i{\mathtt h}_2}{3{\mathtt k}_2}}{\bf E}^2_2].
\end{cases}\end{equation}
Then, 
$p_h\circ {\mathcal R}'_{m,\ell}\circ p_h^{-1}={\rm R}_{Oz}(2\pi {\mathtt h}_1/{\mathtt k}_1)$ and
$p_h\circ[{\bf L}]\cdot {\mathcal R}''_{m,\ell}\cdot [{\bf L}]^{-1}\circ p_h^{-1}={\rm R}_{Oz}(-2\pi {\mathtt h}_2/{\mathtt k}_2)$,
where  ${\rm R}_{Oz}(\theta)$ is the rotation of an angle $\theta$ around the $Oz$-axis of $\R^3$. Hence, ${\mathcal R}'_{m,\ell}$ has order ${\mathtt k}_1$ and ${\mathcal R}''_{m,\ell}$ has order ${\mathtt k}_2$.
Consequently, ${\mathcal R}_{m,\ell}$ is an element of order ${\mathtt n}>1$ belonging to ${\rm T}^2$ and stabilizing the momentum ${\mathfrak m}_{m,\ell}$ of $\gamma_{m,\ell}$. This implies that ${\rm T}^2$ is the stabilizer of ${\mathfrak m}_{m,\ell}$ and that $\gamma_{m,\ell}$ is a symmetrical configuration.

\medskip
\noindent Clearly, it suffices to prove the Theorem in the case of the standard symmetrical configurations
\medskip

\noindent {\bf Part III.} We show that the symmetry group $\widehat{{\rm G}}_{m,\ell}$ of $\gamma_{m,\ell}$ is generated by ${\mathcal R}_{m,\ell}$. Since ${\bf B}_{m,\ell}={\mathfrak U}\cdot {\bf Z}^{-1}$ and ${\mathcal R}_{m,\ell}=[{\bf B}_{m,\ell}(\omega_{m,\ell})\cdot {\bf B}_{m,\ell}(0)^{-1}]$ then, using (\ref{Z}) and (\ref{angffun}), it follows that $\gamma_{m,\ell}(s+\omega_{m,\ell})={\mathcal R}_{m,\ell}\cdot \gamma_{m,\ell}(s)$. Hence, ${\mathcal R}_{m,\ell}\in \widehat{{\rm G}}_{m,\ell}$. Let $[{\bf C}]$ be a symmetry of $\gamma_{m,\ell}$ then, for every $s_*\in \R$, there exist an open interval ${\rm I}$ containing $s_*$ and a strictly monotonic differentiable function $f:{\rm I}\to \R$ such that ${\bf C}\cdot \gamma_{m,\ell}(s)=\gamma_{m,\ell}(f(s))$, for every $s\in {\rm I}$. In particular,  
$\gamma_{m,\ell}\circ f$ and $\gamma_{m,\ell}$ are both natural parameterizations. From this we infer that $f(s)=s+c$, for some constant $c$ (see Remark \ref{parameter}). Thus, there exist a sequence $\{c_{n}\}_{n\in {\mathbb N}}$ and a covering $\{\rm {I}_{n}\}_{n\in {\mathbb N}}$ of $\R$ by open intervals such that $\gamma_{m,\ell}|_{{\rm I}_{n}}$ is injective and that ${\bf C}\cdot \gamma_{m,\ell}(s)=\gamma_{m,\ell}(s+c_{n})$, for every $s\in {\rm I}_{n}$. This implies 
that $c_n=c_m=c$, for every $n,m\in {\mathbb N}$. The constant $c$ is a period of $\kappa_{m,\ell}$, 
ie $c={\rm p}\omega_{m,\ell}$, for some ${\rm p}\in \Z$. Therefore, we have ${\bf C}={\mathcal R}_{m,\ell}^{{\rm p}}$.

\noindent {\bf Part IV.} We prove that $|[\gamma_{m,\ell}]|\cap {\mathcal O}_1=|[\gamma_{m,\ell}]|\cap {\mathcal O}_2=\emptyset$. The chain ${\mathcal O}_1$ is contained in the complex line $z^2=0$ of $\mathbb{CP}^2$. Since $z_2(s|m,\ell)\neq 0$, for every $s\in \R$, then 
$|[\gamma_{m,\ell}]|\cap {\mathcal O}_1=\emptyset$. Keeping in mind that ${\bf L}$  interchanges the role of ${\mathcal O}_1$ and ${\mathcal O}_2$ then, $\gamma_{m,\ell}\cap {\mathcal O}_2=\emptyset$ if and only if $\gamma^{\sharp}_{m,\ell}\cap {\mathcal O}_1=\emptyset$. 
By using (\ref{dL}) and (\ref{cp}), we get that the second homogeneous component $z_2^{\sharp}$  of $\gamma^{\sharp}_{m,\ell}$ is given by 
$$2ir_2(m,\ell)r_3(m,\ell)\big(\lambda_2(m,\ell)-\lambda_3(m,\ell)\big)\sqrt{4\kappa_{m,\ell}-3\lambda_1(m,\ell)}\ e^{i\Phi_3(-|m,\ell)}.$$ Then, $z_2^{\sharp}(s)\neq 0$ for every $s\in \R$. This implies that $\gamma^{\sharp}_{m,\ell}\cap {\mathcal O}_1=\emptyset$.

\noindent {\bf Part V.} Let ${\rm lk}_1$ and ${\rm lk}_2$ be the linking numbers of $\gamma_{m,\ell}$ with its symmetry axes ${\mathcal O}_1$ and ${\mathcal O}_2$ respectively. We show that ${\rm lk}_1={\mathtt l}_1$ and ${\rm lk}_2={\mathtt l}_2$. To this end we consider the Legendrian curve $\widetilde{\gamma}_{m,\ell}=p_{h}\circ \gamma_{m,\ell}:\R\to \R^3$ and its Lagrangian projection $\alpha_{m,\ell}:\R\to \R^2$ (ie the projection of $\widetilde{\gamma}_{m,\ell}$ onto the $Oxy$-plane). By construction, $\widetilde{\gamma}_{m,\ell}$ is periodic with least period ${\mathtt n}\omega_{m,\ell}$ and $|[\widetilde{\gamma}_{m,\ell}]|$ doesn't intersect the $Oz$-axis. The plane curve $\alpha_{m,\ell}$ is periodic, ${\mathtt n}\omega_{m,\ell}$ is one of its periods, and 
$|[\alpha _{m,\ell}]|$ does not pass  through the origin. Consequently, the components $x_{m,\ell}$ and $y_{m,\ell}$ of  $\alpha_{m,\ell}$ can be written as  $x_{m,\ell}=\varrho_{m,\ell}\cos(\vartheta_{m,\ell})$, $y_{m,\ell}=\varrho_{m,\ell}\sin(\vartheta_{m,\ell})$, where $\varrho_{m,\ell}:\R\to \R^+$  and $\vartheta_{m,\ell} : \R\to {\rm S}^1\cong \R/2\pi\Z$ are smooth functions. Let $\tau_1$ and $\tau_2$ be the integers defined by ${\mathtt n}=\tau_1{\mathtt k}_1$ and by ${\mathtt n}=\tau_2{\mathtt k}_2$.
Using (\ref{F5.P1.S6.2}) and (\ref{fact}) we get
$$p_{h}\circ ({\mathcal R}_{m,\ell})^{{\mathtt k}_1} = {\rm R}_{Oz}\left(2\pi \frac{{\mathtt k}_1 {\mathtt h}_2}{{\mathtt k}_2}\right)\circ p_{h}.$$
This implies
\[\begin{split}\widetilde{\gamma}_{m,\ell}(s+{\mathtt k}_1\omega_{m,\ell})&=
{\rm R}_{Oz}(2\pi {\mathtt k}_1 {\mathtt h}_2/{\mathtt k}_2)\widetilde{\gamma}_{m,\ell}(s),\\
\alpha_{m,\ell}(s+{\mathtt k}_1\omega_{m,\ell})&= {\rm R}_{O}(2\pi {\mathtt k}_1 {\mathtt h}_2/{\mathtt k}_2)\alpha_{m,\ell}(s),
\end{split}\]
where ${\rm R}_{O}(\theta)$ is the rotation of an angle $\theta$ around the origin of $\R^2$. Thus, $\varrho_{m,\ell}$ is periodic and ${\mathtt k}_1\omega_{m,\ell}$ is one of its period while $\vartheta_{m,\ell}$ is a quasi-periodic function such that
\begin{equation}\label{qpp}\vartheta_{m,\ell}(s+{\mathtt k}_1\omega_{m,\ell})=\vartheta_{m,\ell}(s)+2\pi \frac{{\mathtt k}_1 {\mathtt h}_2}{{\mathtt k}_2}.\end{equation}
Since ${\rm lk}_1=\rm{lk}(\widetilde{\gamma}_{m,\ell},Oz)$ and expressing $\rm{lk}(\widetilde{\gamma}_{m,\ell},Oz)$ via the Gaussian linking integral \cite{RN}, we get
$${\rm lk}_1=\frac{1}{4\pi}\int_0^{{\mathtt n} \omega_{m,\ell}}\left(\int_{-\infty}^{+\infty}\frac{(\widetilde{\gamma}_{m,\ell}(s)-t\vec{{\mathtt k}})\cdot (\widetilde{\gamma}_{m,\ell}'(s)\times \vec{{\mathtt k}})}{\|\widetilde{\gamma}_{m,\ell}(s)-t\vec{{\mathtt k}}\|^3}dt
\right)ds.
$$
On  the other hand, from (\ref{cp}) we have
\[\begin{split}
& \int_{-\infty}^{+\infty}\frac{(\widetilde{\gamma}_{m,\ell}(s)-t\vec{{\mathtt k}})\cdot (\widetilde{\gamma}_{m,\ell}'(s)\times \vec{{\mathtt k}})}{\|\widetilde{\gamma}_{m,\ell}(s)-t\vec{{\mathtt k}}\|^3}dt=\\
=&\int_{-\infty}^{+\infty}\frac{x_{m,\ell}(s)y_{m,\ell}'(s)-x_{m,\ell}'(s)y_{m,\ell}(s)}{(x_{m,\ell}(s)^2+y_{m,\ell}(s)^2+z_{m,\ell}(s)^2+t^2-2tz_{m,\ell}(s))^{3/2}}dt=2\vartheta_{m,\ell}'(s)
\end{split}\]
Using (\ref{qpp}), we obtain
\[
{\rm lk}_1=\frac{1}{2\pi}\int_0^{{\mathtt n} \omega_{m,\ell}}\vartheta_{m,\ell}'(s)ds=
\frac{1}{2\pi}\int_0^{\tau_1 {\mathtt k}_1 \omega_{m,\ell}}\vartheta_{m,\ell}'(s)ds
=\frac{\tau_1 {\mathtt k}_1 {\mathtt h}_2}{{\mathtt k}_2} = {\mathtt n}\frac{ {\mathtt h}_2}{{\mathtt k}_2} = {\mathtt l}_1.
\]
To prove that ${\rm lk}_2={\mathtt l}_2$ we consider the dual configuration $\gamma^{\sharp}_{m,\ell}={\bf L}\cdot \gamma_{m,\ell}$. Since
${\rm lk}(\gamma_{m,\ell},{\mathcal O}_2)= {\rm lk}({\bf L}^{-1}\cdot\gamma^{\sharp}_{m,\ell},{\bf L}^{-1}\cdot{\mathcal O}_1)={\rm lk}(\gamma^{\sharp}_{m,\ell},{\mathcal O}_1),
$
it suffices to prove that ${\rm lk}(\gamma^{\sharp}_{m,\ell},{\mathcal O}_1)={\mathtt l}_2$. The monodromy of $\gamma^{\sharp}_{m,\ell}$ is given by
$${\mathcal R}^{\sharp}_{m,\ell}=[{\bf L}\cdot {\mathcal R}_{m,\ell}\cdot {\bf L}^{-1}]=[{\mathfrak U}(e^{2i\pi q_3}{\bf E}^1_1+e^{-2\pi i (q_3+q_2)}{\bf E}^2-2+e^{2\pi i q_2}{\bf E}^3_3)\cdot {\mathfrak U}^{-1}].$$
Then
\begin{equation}\label{pk}p_{h}\circ ({\mathcal R}^{\sharp}_{m,\ell})^{{\mathtt k}_2} = {\rm R}_{Oz}(-2\pi 
{\mathtt k}_2 {\mathtt h}_2/{\mathtt k}_1)\circ p_{h}.\end{equation}
Let $\widetilde{\gamma}^{\sharp}_{m,\ell}=p_{h}\circ \gamma^{\sharp}_{m,\ell}$ and  $\alpha^{\sharp}_{m,\ell}$ be the Lagrangian projection of $\widetilde{\gamma}^{\sharp}_{m,\ell}$. Denote by $\varrho^{\sharp}$ and by $\vartheta^{\sharp}$ the radial and the angular functions of $\alpha^{\sharp}_{m,\ell}$. From (\ref{pk}) we get
\[\begin{cases}\widetilde{\gamma}^{\sharp}_{m,\ell}(s+{\mathtt k}_2\omega_{m,\ell})=
{\rm R}_{Oz}(-2\pi {\mathtt k}_2 {\mathtt h}_1/{\mathtt k}_1)\widetilde{\gamma}^{\sharp}_{m,\ell}(s),\\
\alpha^{\sharp}_{m,\ell}(s+{\mathtt k}_2\omega_{m,\ell})= {\rm R}_{O}(-2\pi {\mathtt k}_2 {\mathtt h}_1/{\mathtt k}_1)\alpha^{\sharp}_{m,\ell}(s).
\end{cases}\]
The radial function $\varrho^{\sharp}$ is periodic and ${\mathtt k}_2\omega_{m,\ell}$ is one of its periods while 
\begin{equation}\label{qpp2}\vartheta^{\sharp}(s+{\mathtt k}_2\omega_{m,\ell})=\vartheta^{\sharp}(s)-2\pi {\mathtt k}_2 {\mathtt h}_1/{\mathtt k}_1.\end{equation}
Then,
\[\begin{split}{\rm lk}_2=& {\rm lk}(\gamma^{\sharp}_{m,\ell},{\mathcal O}_1)=
\frac{1}{4\pi}\int_0^{{\mathtt n} \omega_{m,\ell}}\left(\int_{-\infty}^{+\infty}\frac{(\widetilde{\gamma}^{\sharp}_{m,\ell}(s)-t\vec{{\mathtt k}})\cdot (\widetilde{\gamma}^{\sharp}_{m,\ell}{}^{'}(s)\times \vec{{\mathtt k}})}{\|\widetilde{\gamma}^{\sharp}_{m,\ell}(s)-t\vec{{\mathtt k}}\|^3}dt
\right)ds=\\
=&\frac{1}{2\pi}\int_0^{{\mathtt n} \omega_{m,\ell}}\vartheta^{\sharp}{}'(s)ds=
\frac{1}{2\pi}\int_0^{\tau_2 {\mathtt k}_2 \omega_{m,\ell}}\vartheta^{\sharp}{}'(s)ds
=-\frac{\tau_2 {\mathtt k}_2 {\mathtt h}_1}{{\mathtt k}_1} = -{\mathtt n}\frac{ {\mathtt k}_1}{{\mathtt h}_1} = {\mathtt l}_2.
\end{split}\]
}\end{proof}

\subsection{Examples}\label{S6.3} We use the notation $|{\mathtt n},{\mathtt l}_1,{\mathtt l}_2 >$ for the standard symmetrical configuration with wave number ${\mathtt n}$ and linking numbers ${\mathtt l}_1$ and ${\mathtt l}_2$.  The Maslov index of $|{\mathtt n},{\mathtt l}_1,{\mathtt l}_2 >$ is equal to ${\mathtt l}_2+{\mathtt l}_1$. The cardinality $\varrho({\mathtt n})$ of the set ${\mathcal C}_{\mathtt {n}}$ of the equivalence classes of closed string with symmetry group of order ${\mathtt n}$ exhibits a quadratic growth (see Figure \ref{FIG5}). 
\begin{figure}[h]
\begin{center}
\includegraphics[height=4.1cm,width=6.2cm]{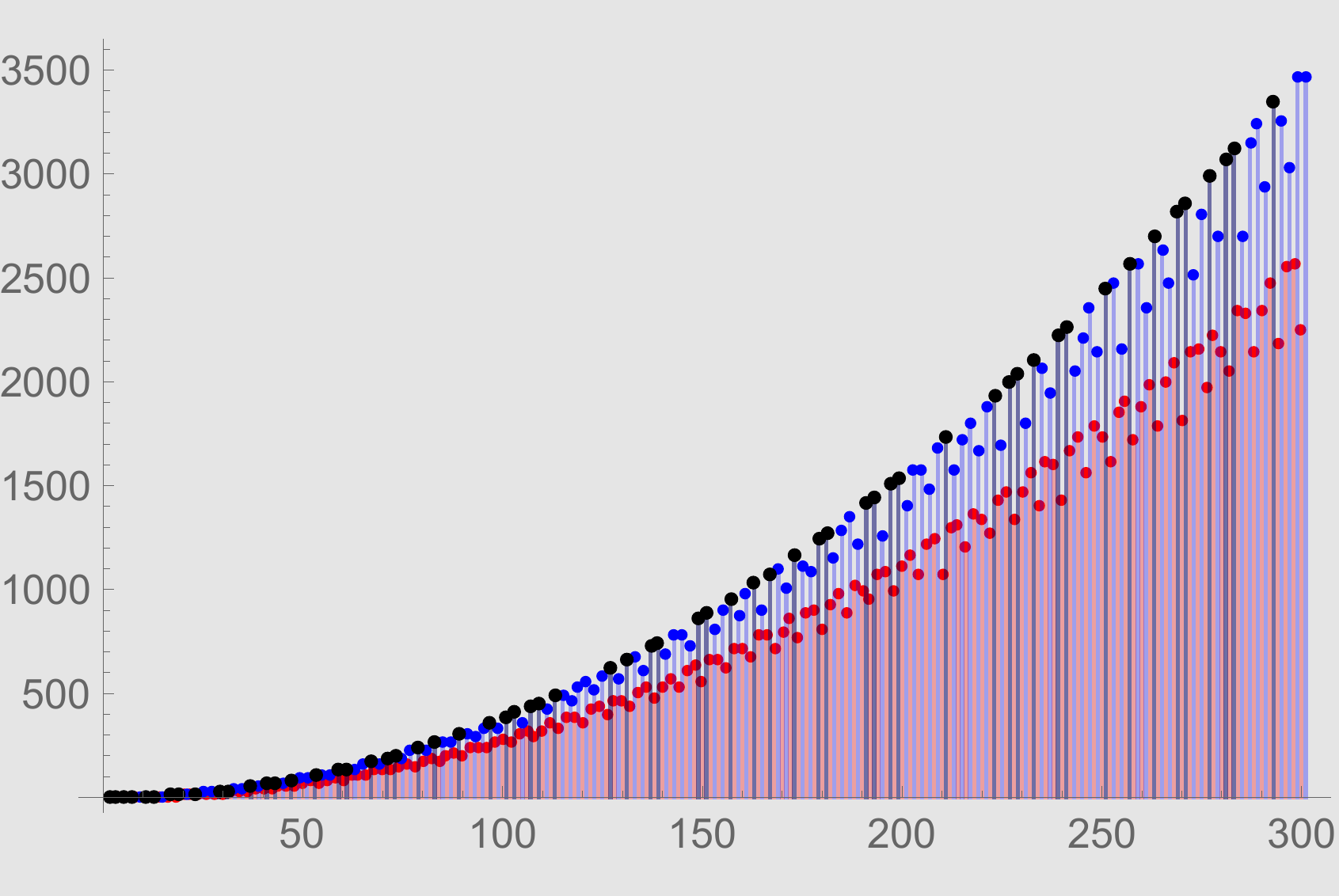}
\includegraphics[height=4.1cm,width=6.2cm]{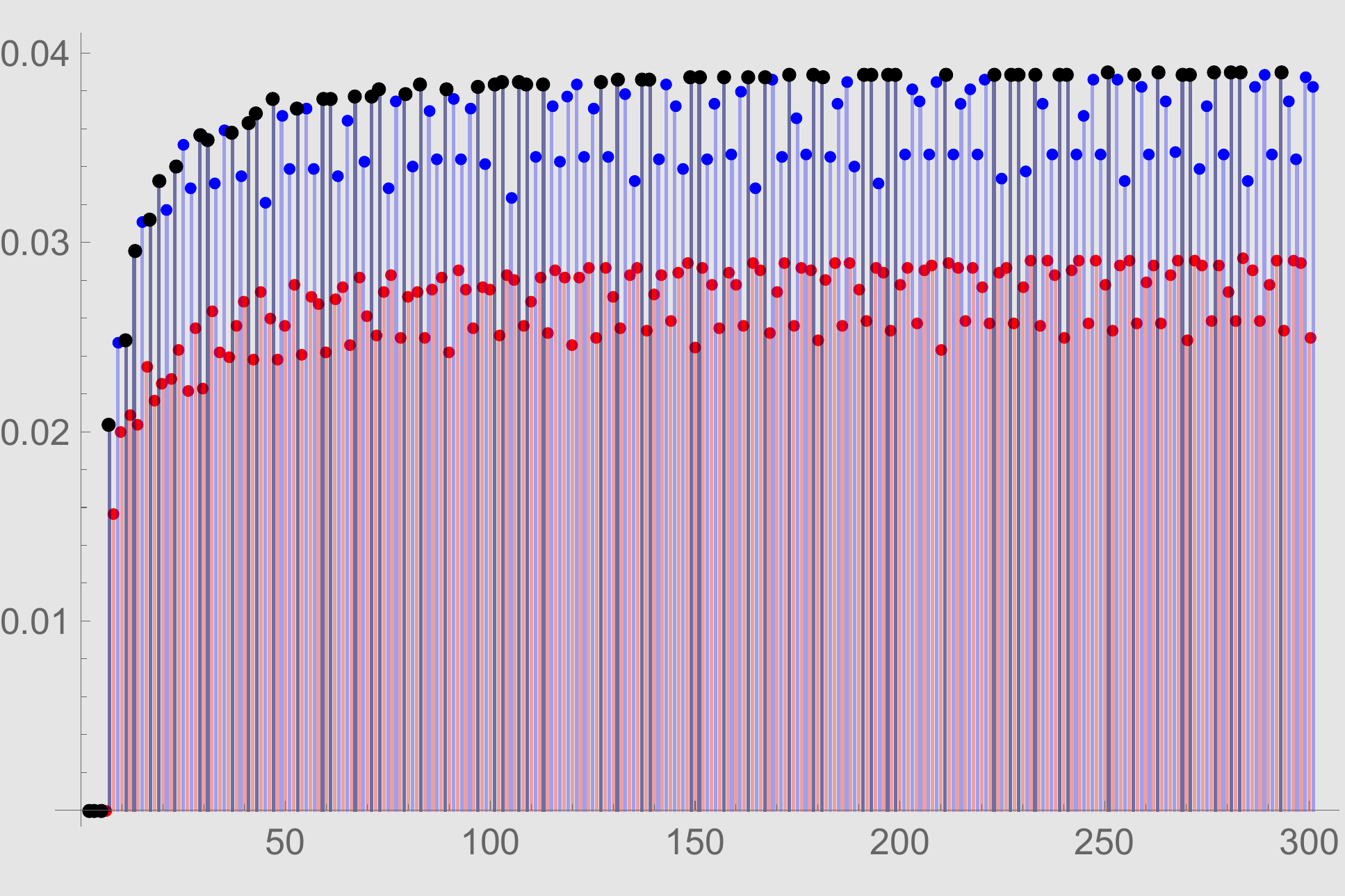}
\caption{\small{$\varrho(n)$ (left) and $\varrho(n)/n^2$  (right), $7\le n\le 300$; red = $n$ even, blue = $n$ is odd but not prime, black = $n$ is prime.}}\label{FIG5}
\end{center}
\end{figure}
\noindent There are no closed strings with wave number ${\mathtt n}<7$. In Table  $1$ we list the standard symmetrical configurations with wave numbers $7,8,9$  and their basic invariants: characteristic numbers ${\mathtt h}_1,{\mathtt k}_1,{\mathtt h}_2,{\mathtt k}_2$, characters $(m,\ell)$, wavelength $\omega_{m,\ell}$, total strain ${\mathfrak S}$, Maslov index $\mathfrak{r}$, Bennequin-Thurston invariant $\mathfrak{tb}$ and the knot type ${\rm kt}$. Figures \ref{FIG6A} and \ref{FIG6B} reproduce the corresponding standard symmetrical configurations. The characters $(m,\ell)$ are computed with numerical methods. The invariants, $\mathfrak{r}$ and $\mathfrak{tb}$ are found via numerical integration of the total curvature of the Lagrangian projection of $p_{h}\circ \gamma$ and of the Gaussian linking integral of $p_{h}\circ \gamma$ with $p_{h}\circ \gamma+\varepsilon \vec{k}$, 
$0<\varepsilon \ll 1$ \cite{GE,RN}.

\begin{center}
\begin{tabular}{|c|c|c|c|c|c|c|c|c|}
\hline 
\rule[-1ex]{0pt}{2.5ex} string & $(\frac{{\mathtt h}_1}{{\mathtt k}_1},\frac{{\mathtt h}_2}{{\mathtt k}_2})$&$(m,\ell)$&
$\omega_{m,\ell}$& ${\mathfrak S}$&$\mathfrak{r}$&$\mathfrak{tb}$&kt \\ 
\hline
\rule[-1ex]{0pt}{2.5ex} $|7,1,-5>$&$(\frac{5}{7},\frac{1}{7})$&$(0.894052,2.78109)$&$1.83449$&$12.8414$&$-4$&$-5$&trivial\\
\hline
\rule[-1ex]{0pt}{2.5ex} $|8,1,-6>$&$(\frac{3}{4},\frac{1}{8})$&$(0.762709,2.13126)$&$2.04567$&$16.3654$&$-5$&$-6$&trivial\\
\hline
\rule[-1ex]{0pt}{2.5ex} $|9,1,-7>$&$(\frac{7}{9},\frac{1}{9})$&$(0.616723,1.82908)$&$2.15197$&$19.3677$&$-6$&$-7$&trivial\\
\hline
\rule[-1ex]{0pt}{2.5ex} $|9,2,-6>$&$(\frac{2}{3},\frac{2}{9})$&$(0.906698,3.05894)$&$1.70697$&$15.3627$&$-4$&$-3$&trefoil\\
\hline
\end{tabular}\end{center}
\begin{center}
\begin{tabular}{c}
\footnotesize{TABLE 1}\\
\end{tabular}\end{center}
The experimental evidence suggest that a standard symmetrical configuration with ${\mathtt l}_1=1$ is a trivial Legendrian knot with $\mathfrak{tb}={\mathtt l}_2$ and $\mathfrak{r}=\mathfrak{tb}+1$. Thus, a string with ${\mathtt l}_1=1$ can be obtained from a cycle, via $|{\mathtt l}_2|$ negative stabilizations \cite{EF,ET1}.

\begin{figure}[h]
\begin{center}
\includegraphics[height=6cm,width=6cm]{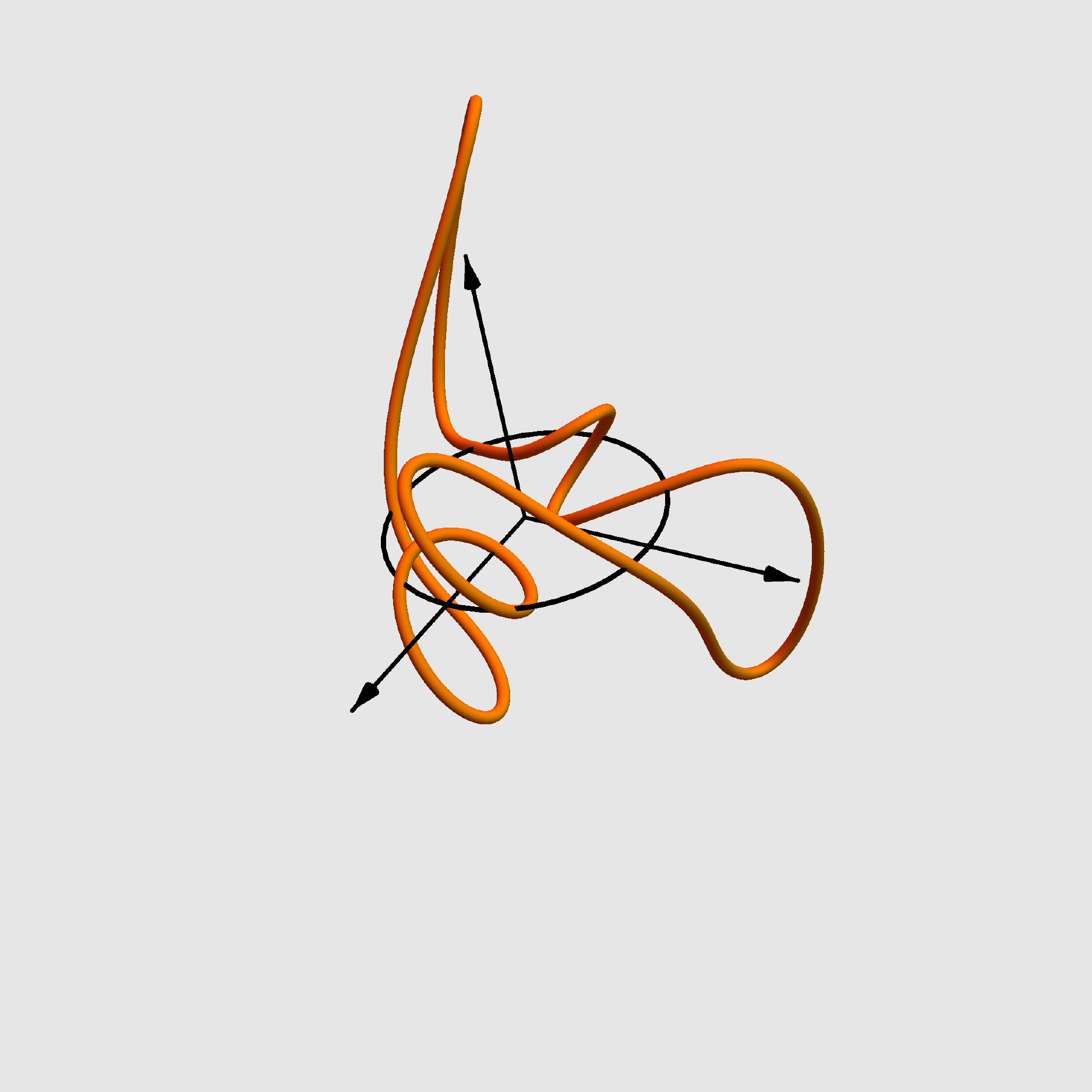}
\includegraphics[height=6cm,width=6cm]{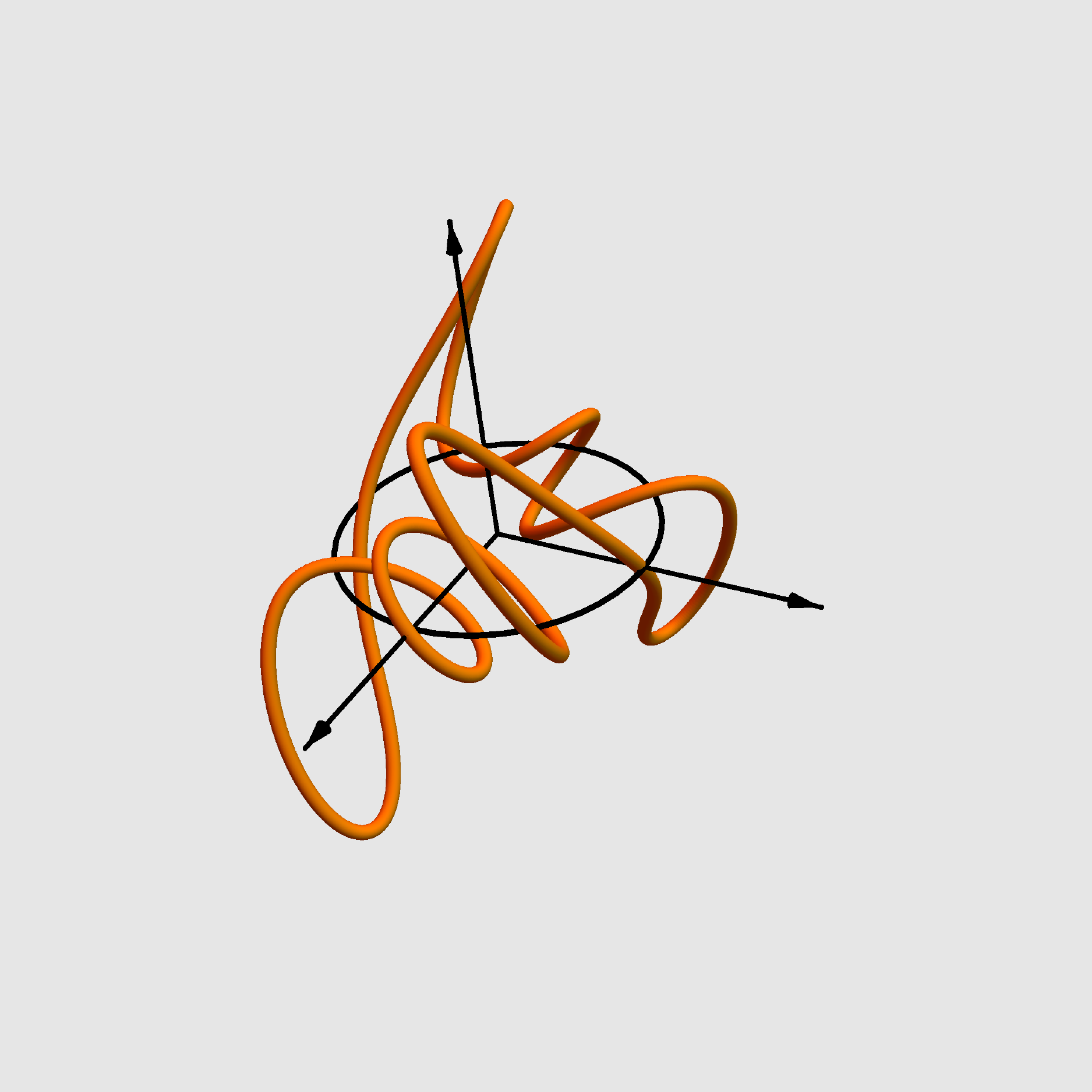}
\caption{\small{$|7,1,-5>$ (left) and $|8,1,-6>$ (right).}}\label{FIG6A}
\end{center}
\end{figure}

\noindent In Table $2$ we list the basic invariants of two standard symmetrical configurations with less obvious knot types. Figure \ref{FIG7} depicts these two strings.
\begin{center}
\begin{tabular}{|c|c|c|c|c|c|c|c|c|}
\hline 
\rule[-1ex]{0pt}{2.5ex} string & $(\frac{{\mathtt h}_1}{{\mathtt k}_1},\frac{{\mathtt h}_2}{{\mathtt k}_2})$&$(m,\ell)$&
$\omega_{m,\ell}$& ${\mathfrak S}$&$\mathfrak{r}$&$\mathfrak{tb}$&kt \\ 
\hline
\rule[-1ex]{0pt}{2.5ex} $|13,3,-9>$&$(\frac{9}{13},\frac{3}{13})$&$(0.70944,2.14341)$&$1.94971$&$25.3462$&$-6$&$-1$&$8_{19}$\\
\hline
\rule[-1ex]{0pt}{2.5ex} $|21,5,-15>$&$(\frac{5}{7},\frac{5}{21})$&$(0.36972,1.71141)$&$2.05338$&$43.1209$&$-10$&$9$&${\rm T}_{(7,5)}$\\
\hline
\end{tabular}\end{center}
\begin{center}
\begin{tabular}{c}
\footnotesize{TABLE 2}\\
\end{tabular}\end{center}

\begin{figure}[h]
\begin{center}
\includegraphics[height=6cm,width=6cm]{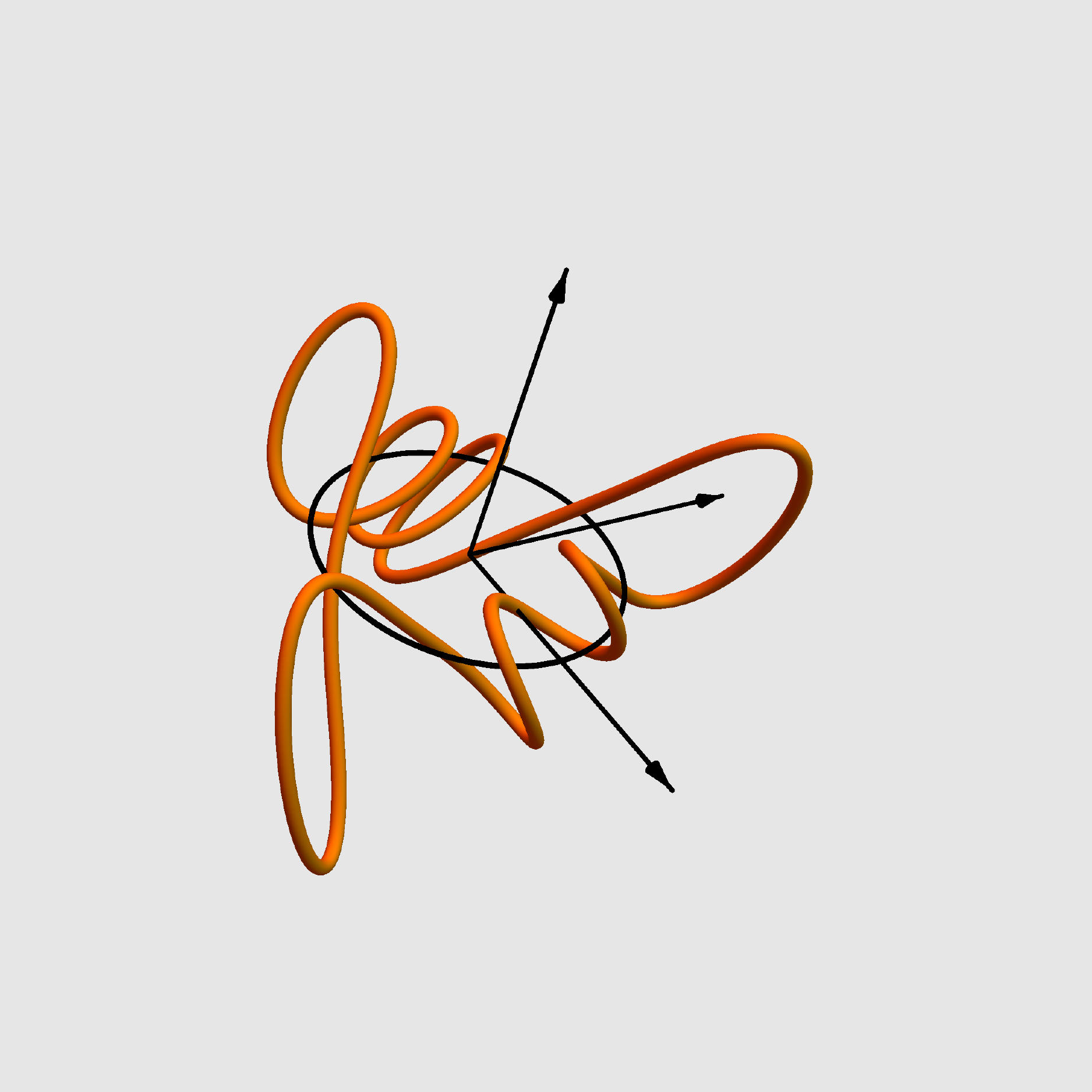}
\includegraphics[height=6cm,width=6cm]{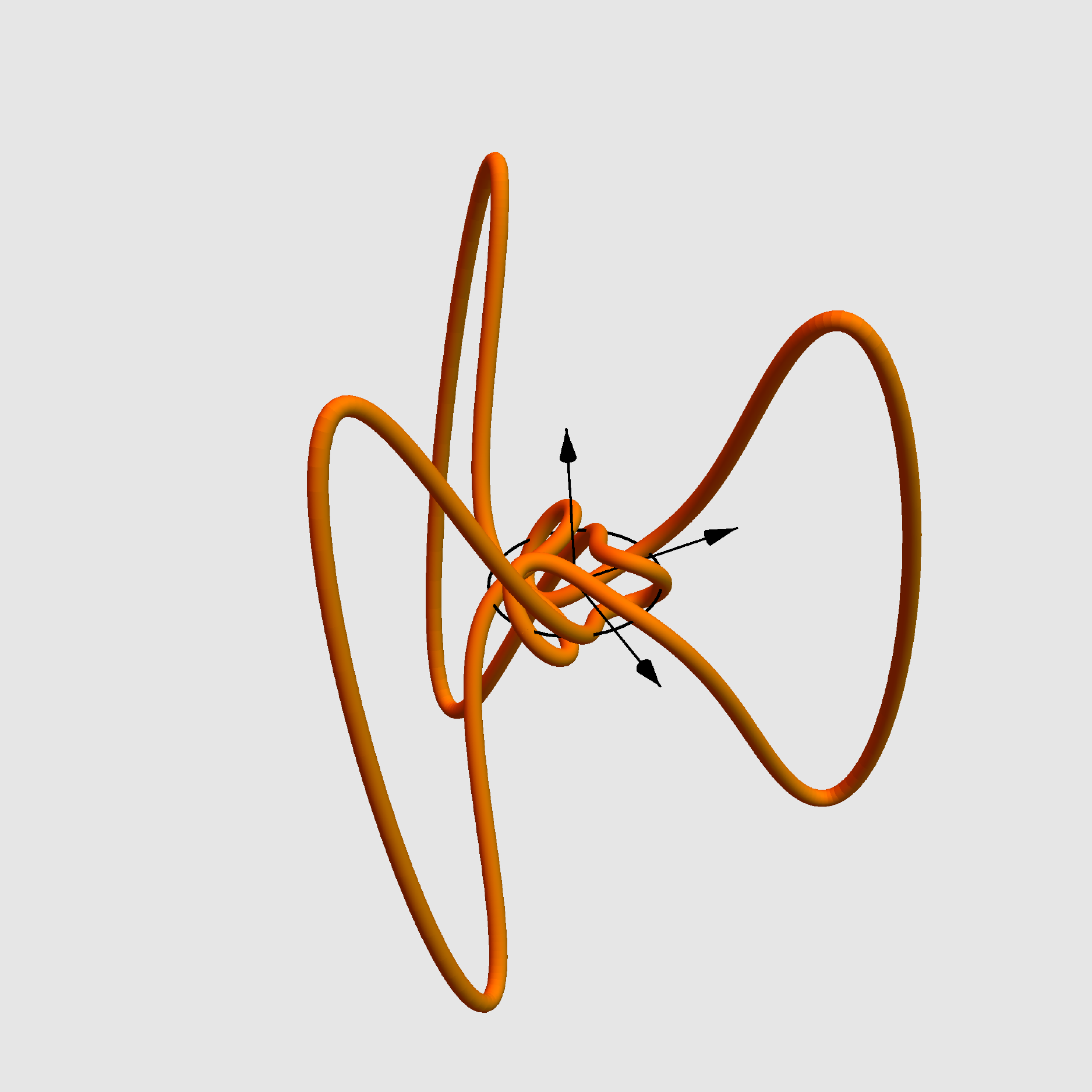}
\caption{\small{$|9,1,-7>$ (left) and $|9,2,-6>$ (right).}}\label{FIG6B}
\end{center}
\end{figure}

\begin{figure}[h]
\begin{center}
\includegraphics[height=6cm,width=6cm]{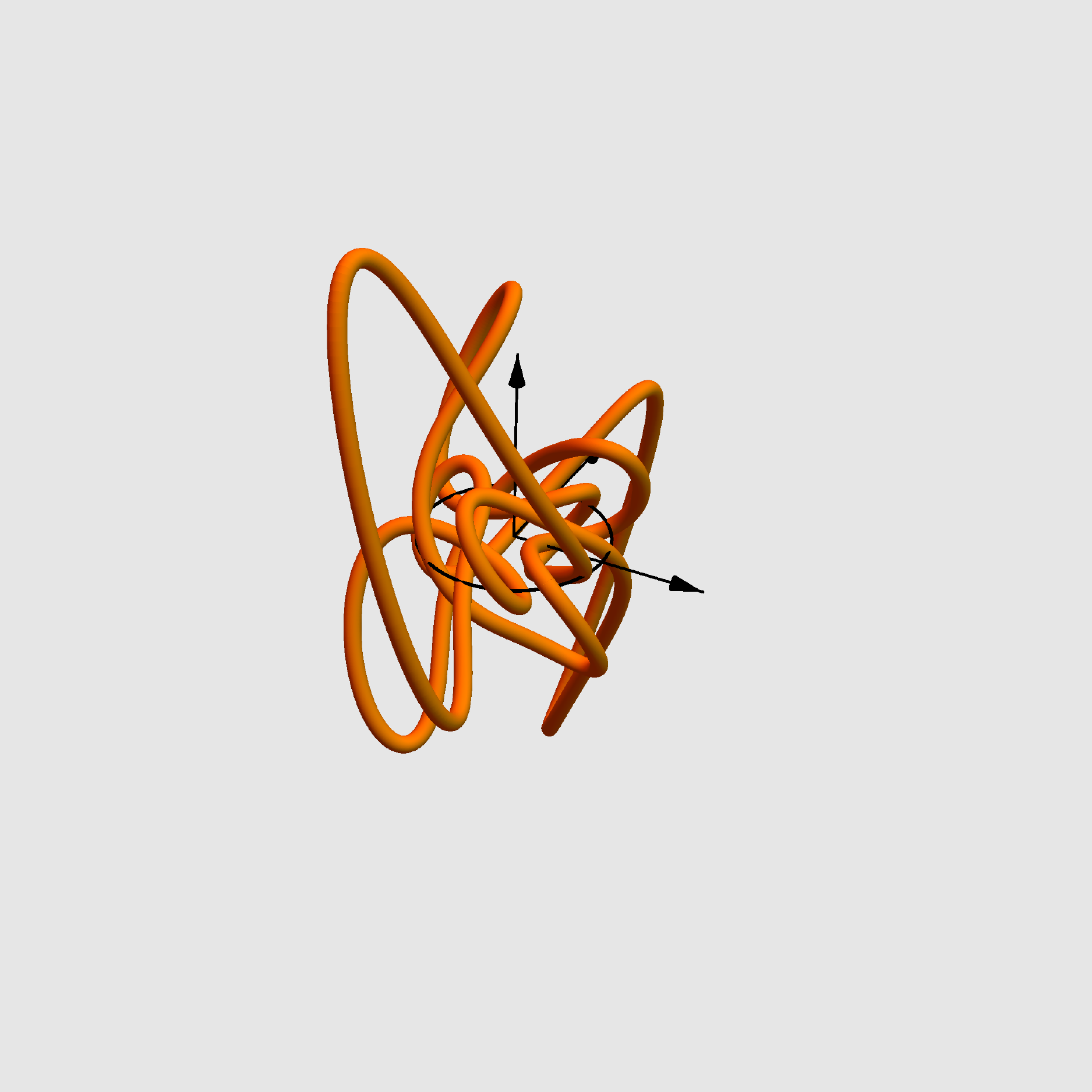}
\includegraphics[height=6cm,width=6cm]{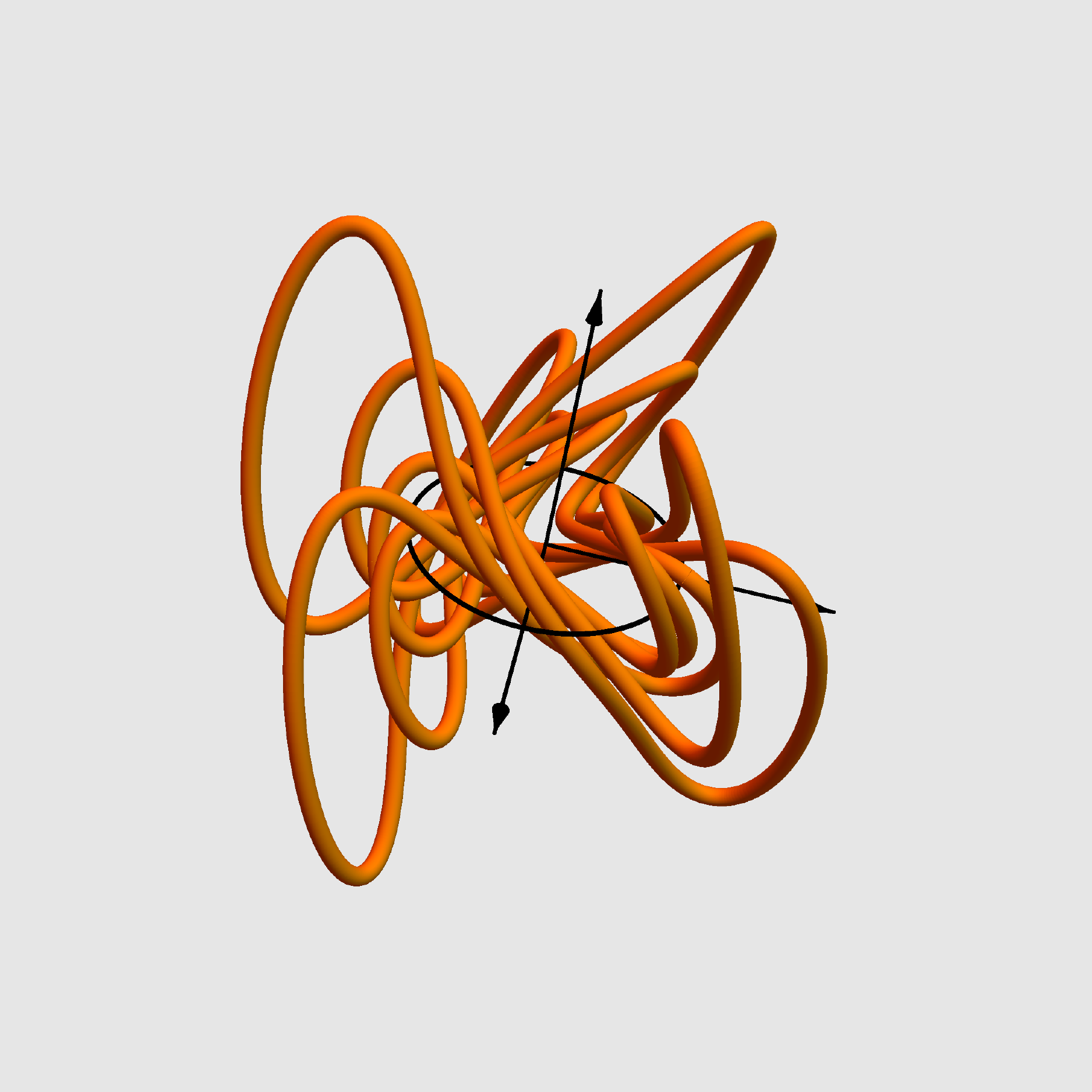}
\caption{\small{$|13,3,-9>$ (left) and $|21,5,-15>$ (right).}}\label{FIG7}
\end{center}
\end{figure}

\noindent The shape of the strings becomes more complicated when ${\mathtt n}$, ${\mathtt l}_1$ and ${\mathtt l}_2$ increase. Figure \ref{FIG8} reproduces the standard symmetrical configurations $|70,2,-42>$ and $|112,21,-80>$. As one can see from the pictures, the stands of the string may approach each other and it is not always evident if the string is simple or not.

\begin{figure}[h]
\begin{center}
\includegraphics[height=6cm,width=6cm]{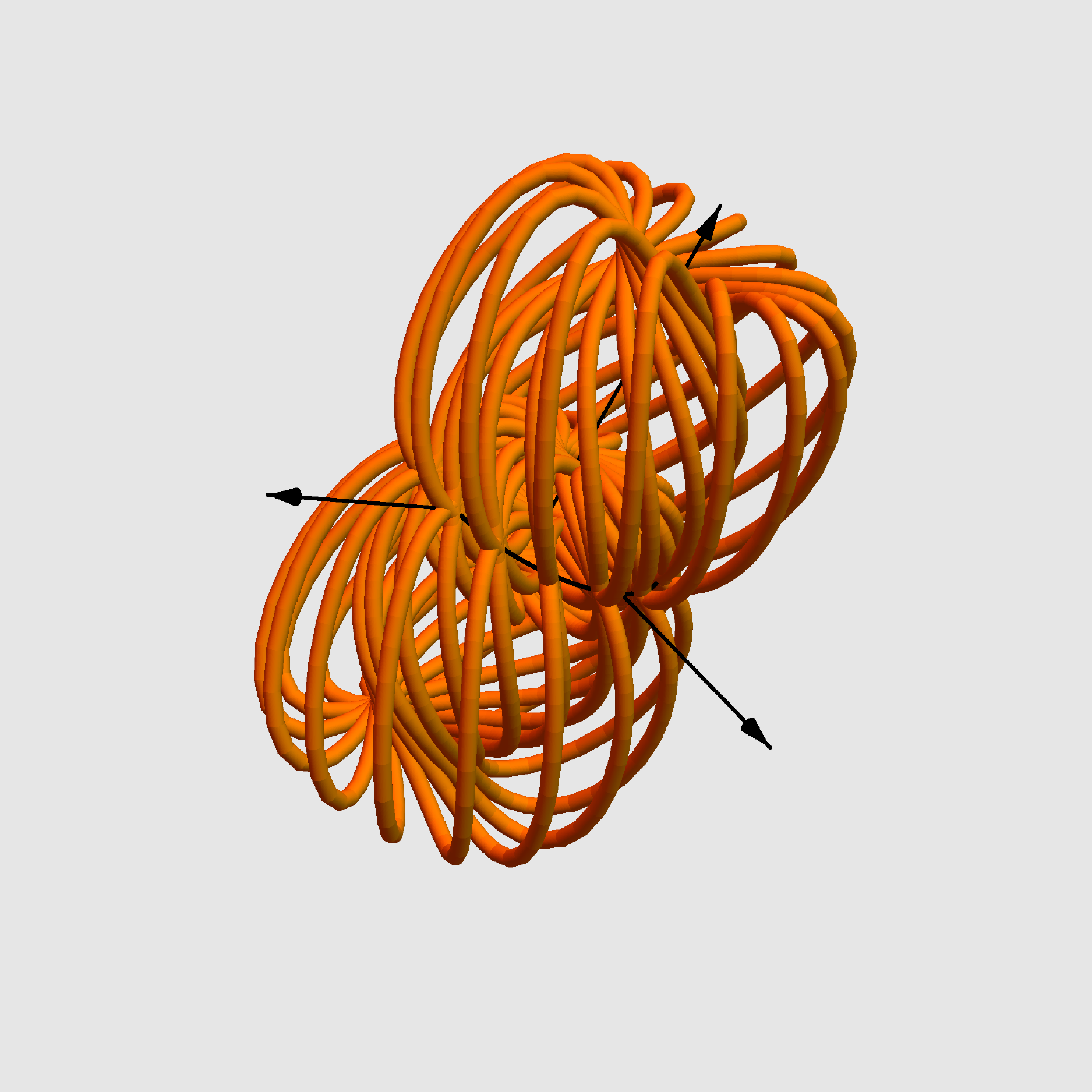}
\includegraphics[height=6cm,width=6cm]{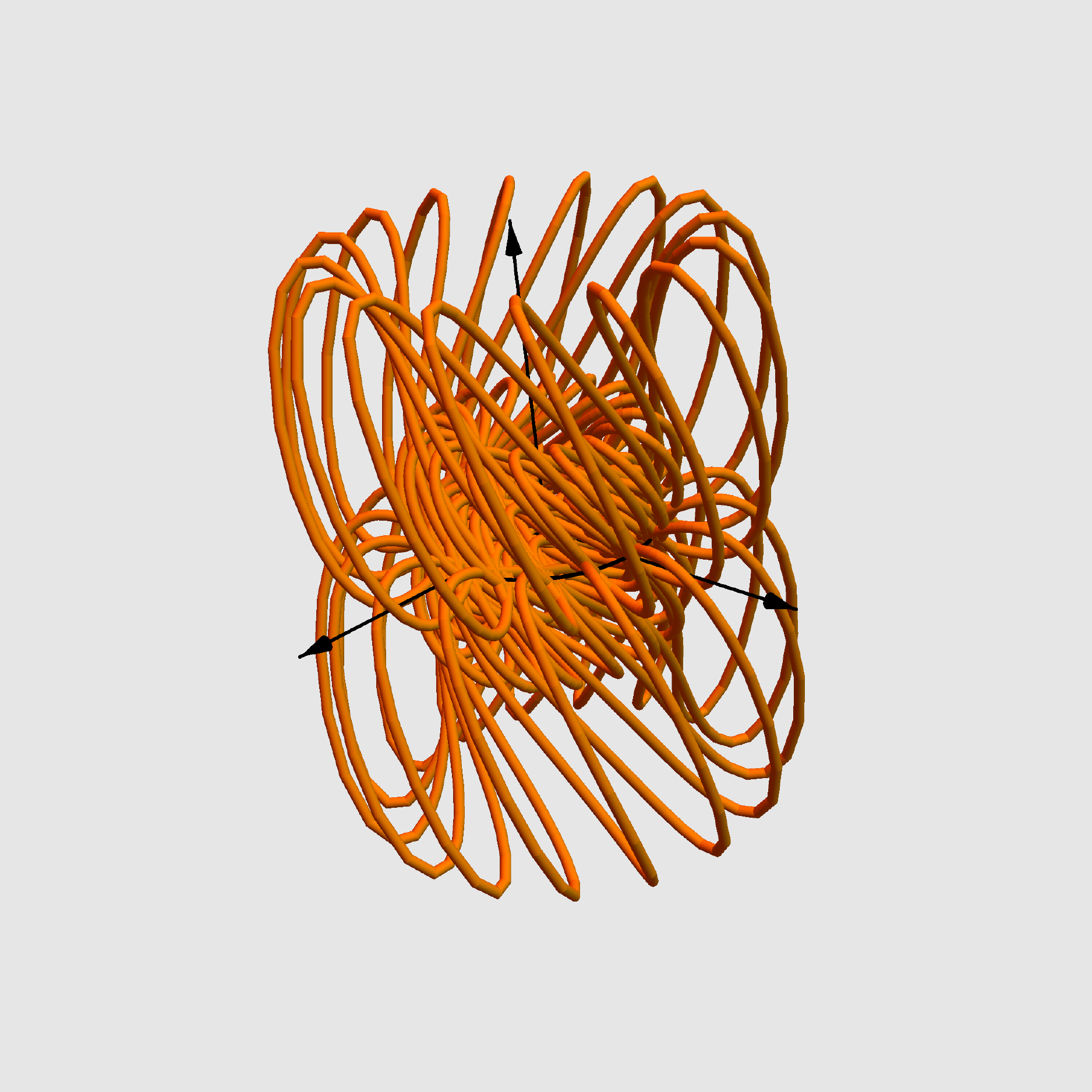}
\caption{\small{Overtwisted closed strings: $|70,2,-42>$ (left) and $|112,21,-80>$  (right).}}\label{FIG8}
\end{center}
\end{figure}

\bibliographystyle{amsalpha}

\end{document}